\numberwithin{equation}{section} 
\theoremstyle{plain}
\newtheorem{thm}{Theorem}[section] 
\newtheorem{prop}[thm]{Proposition} 
\newtheorem{cor}[thm]{Corollary} 
\newtheorem{lem}[thm]{Lemma}
\newtheorem{theorem*}{Theorem}[] 
\theoremstyle{definition}
\theoremstyle{remark} 
\newtheorem{rem}[thm]{Remark} 
\newtheorem{question}[thm]{Question}
\newcommand{\R}{\mathbb{R}} 
\newcommand{\Z}{\mathbb{Z}}
\newcommand{\proj}{\mathbb{P}}
\newcommand{\AS}{\mathcal{AS}}
\newcommand{\as}[1]{\overline{#1}^{\AS}}
\newcommand{\inv}{^{-1}} 
\newcommand{\p}{\partial}
\newcommand{\mc}{\mathcal}
\DeclareMathOperator{\lk}{lk} 
\DeclareMathOperator{\half}{\textstyle\frac 1 2 } 
\DeclareMathOperator{\codim}{codim} 
\DeclareMathOperator{\Ker}{Ker}
\DeclareMathOperator{\Coker}{Coker}
\DeclareMathOperator{\Image}{Im}
\DeclareMathOperator{\ih}{IH}
\DeclareMathOperator{\cl}{cl}
\DeclareMathOperator{\supp}{Supp}
\DeclareMathOperator{\bd}{bd}
\begin{document} 

\address {Mathematics Department, University of Georgia, Athens GA
30602, USA}

\email{clint@math.uga.edu}

\address {Universit\' e Nice Sophia Antipolis, CNRS,  LJAD, UMR 7351, 06108 Nice, France}

\email{adam.parusinski@unice.fr}


\title{Real intersection homology}

\author{ Clint McCrory and Adam Parusi\'nski }

\begin{abstract}
We present a definition of intersection homology for real algebraic varieties that is analogous to Goresky and MacPherson's original definition of intersection homology for complex varieties.
 \end{abstract}

\maketitle


Let $X$ be a real algebraic variety. For certain stratifications $\mc S$ of $X$ we define homology groups $\ih _k ^{\mc S}(X)$ with $\Z/2$ coefficients that generalize the standard intersection homology groups \cite{gm1} if all strata have even codimension. Whether there is a good analog of intersection homology for real algebraic varieties was stated as a problem by Goresky and MacPherson \cite{gm3} (Problem 7, p.\ 227). They observed that if such a theory exists then it cannot be purely topological; indeed our groups are not homeomorphism invariants.

We consider a class of algebraic stratifications introduced in \cite{mpp} that have a natural general position property for semialgebraic subsets. After presenting the definition and properties of these stratifications $\mc S$, we define the real intersection homology groups $\ih _k ^{\mc S}(X)$ and show that they are independent of the stratification. We prove that if $X$ is nonsingular and pure dimensional then $\ih_k(X) = H_k(X;\Z/2)$, classical homology with $\Z/2$ coefficients. More generally, we prove that if $X$ is irreducible and $X$ admits a small resolution $\pi : \widetilde X\to X$ then $\ih_k(X)$ is canonically isomorphic to $H_k (\widetilde X;\Z/2)$. Thus any two small resolution of $X$ have the same homology. 

If $X$ is not compact, we have two versions of real intersection homology: $\ih^c_k(X)$ with compact supports and $\ih^{cl}_k(X)$ with closed supports.
We define an intersection pairing $
\ih^c_k(X) \times \ih^{cl}_{n-k}(X) \to \Z/2$, where $n=\dim X$. We prove that if $X$ has isolated singularities this pairing is nonsingular,  so $\ih^c_k(X) \cong \ih^{cl}_{n-k}(X)$ for all $k\geq 0$. But our intersection pairing is singular for some real algebraic varieties $X$, so our groups fail to have the key self-duality property suggested by Goresky and MacPherson. We will present a counterexample in a subsequent paper.

We benefitted from conversations more than a decade ago with Joost van Hamel, whose approach to real intersection homology \cite{vanhamel} was quite different from ours. We dedicate this paper to his memory.


\section{Good algebraic stratifications}

An  \emph{algebraic stratification}  of a real algebraic variety $X$ is the stratification associated to a filtration of $X$ by algebraic subvarieties 
$$X=X_n\supset X_{n-1} \supset \cdots \supset X_0 \supset X_{-1} = \emptyset $$
    such that for each $j$, $\operatorname{Sing}(X_j) \subset X_{j-1}$, and either $\dim X_j = j$ or $X_j = X_{j-1}$.  
This filtration induces a decomposition $X=\bigsqcup S_i$, where the $S_i$ are the connected components of all the semialgebraic sets $X_j\setminus  X_{j-1}$.  The sets $S_i$, which are nonsingular semialgebraic subsets of $X$, are  
the strata of the algebraic stratification $\mc S= \{S_i\}$ 
of $X$. (For this definition we do not assume the frontier condition or any regularity conditions for pairs of incident strata.)

Let $\mc T$ be an algebraic stratification of projective space $\proj^r$. An $l$-parameter \emph{submersive deformation of the identity} of $(\proj^r,\mc T)$ is a semialgebraic stratified submersive family of diffeomorphisms $\Psi: U\times \proj^r\to \proj^r$, where $U$ is a semialgebraic open neighborhood of the origin in $\R^l$, such that $\Psi(0,x) = x$ for all $x\in \proj^r$, and the map $\Phi: U\times \proj^r\to U\times \proj^r$, $\Phi(t,x) = (t,\Psi(t,x))$ is an arc-wise analytic trivialization of the projection $U\times \proj^r\to U$. 

In particular, for each stratum $S\in \mc T$, we have $\Psi(U\times S) \subset S$. For the restriction $\Psi: U\times S\to S$ we have that $\Psi_t(x) = \Psi(t,x)$ is a diffeomorphism for all $t\in U$, and if $\Psi^x(t) = \Psi(t,x)$ then the differential $D\Psi^x$ at $t$ is surjective for all $x\in S$. Moreover, for all $t\in U$ the map $\Psi_t: \proj^r\to \proj^r$ is an  arc-analytic stratum-preserving semialgebraic homeomorphism with arc-analytic inverse. (For further details see \cite{mpp}, \cite{pp}.)

We say the algebraic stratification $\mc S$ of $X\subset \proj^r$ is \emph{good} if it is a Whitney stratification and there exists an algebraic Whitney stratification $\mc T$ of $\proj^r$ such that $\mc S = \mc T|X$ (\emph{i.e.} $\mc S = \{ S\in \mc T; S\subset X\}$) and there exists a submersive deformation of the identity of $(\proj^r,\mc T)$.

With this terminology, the main theorem of \cite{mpp} gives the following existence theorem.

\begin{thm}\label{goodexist}
Let $X$ be a subvariety of $\proj^r$, and let $\mc V$ be a finite family of algebraic subvarieties of $X$. There exists a good stratification $\mc S$ of $X$ compatible with all the subvarieties $V\in\mc V$.
\end{thm}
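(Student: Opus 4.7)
My approach is to reduce to the existence theorem for submersive deformations of the identity on $\proj^r$ that is the main result of \cite{mpp}, and then simply restrict the resulting stratification to $X$.

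The first step is to enlarge the given family by appending $X$ itself, obtaining a finite family $\mc V' = \mc V \cup \{X\}$ of algebraic subvarieties of $\proj^r$. The main theorem of \cite{mpp} then produces an algebraic Whitney stratification $\mc T$ of $\proj^r$ that is compatible with every $V \in \mc V'$, together with a submersive deformation of the identity $\Psi : U \times \proj^r \to \proj^r$ of $(\proj^r,\mc T)$ in the sense defined above. This is the only substantive input; everything after it is bookkeeping.

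Once $\mc T$ is in hand, I set $\mc S := \mc T|X = \{S \in \mc T : S \subset X\}$. Because $X$ is a union of strata of $\mc T$, the collection $\mc S$ is the stratification associated to the induced filtration $X_j := X \cap T_j$, where $(T_j)$ is the filtration of $\proj^r$ by singular loci underlying $\mc T$: since $X_j$ is itself a union of strata of $\mc T$, the inclusion $\operatorname{Sing}(X_j) \subset X_{j-1}$ is automatic, and whenever no stratum of $\mc T$ contained in $X$ has dimension $j$ one simply has $X_j = X_{j-1}$, so the format required in the definition of an algebraic stratification is met. Whitney's conditions for $\mc S$ are inherited from those for $\mc T$, and each $V \in \mc V$, being a union of strata of $\mc T$ contained in $X$, is a union of strata of $\mc S$, so $\mc S$ is compatible with $\mc V$. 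Together with $\Psi$, which is already defined on all of $\proj^r$ for the same $\mc T$, this exhibits $\mc S$ as a good stratification of $X$.

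The only genuinely hard part is thus the appeal to \cite{mpp}: producing the semialgebraic arc-wise analytic trivialization $\Psi$ simultaneously with the ambient Whitney stratification is the technical heart of that paper, and I see no shortcut that would avoid invoking it.
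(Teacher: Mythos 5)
Your argument is exactly the paper's: the authors cite the main theorem of \cite{mpp} without further comment, and your proof merely makes explicit the routine reduction (append $X$ to $\mc V$, apply the ambient theorem to obtain $(\mc T,\Psi)$ on $\proj^r$, restrict to $X$) that the paper treats as immediate. The bookkeeping you supply — that $\mc T|X$ inherits the Whitney conditions, that the filtration format is satisfied, and that compatibility with $\mc V$ passes down — is correct and is precisely what the paper's definition of ``good'' demands.
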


\begin{cor}\label{goodrefine} Let $X$ be a subvariety of $\proj^r$.

(a) Every algebraic stratification of $X$ has a good refinement.

(b) Any two good algebraic stratifications of $X$ have a common good refinement.
\end{cor}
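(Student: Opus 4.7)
The plan is to deduce both statements directly from Theorem \ref{goodexist} by choosing the right input family of subvarieties, and then to read off refinement from the fact that algebraic strata are connected components of successive differences of the filtration.

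For part (a), let $\mc S_0$ be an arbitrary algebraic stratification of $X$, arising from a filtration
\[
X = X_n \supset X_{n-1} \supset \cdots \supset X_0 \supset X_{-1} = \emptyset.
\]
I would apply Theorem \ref{goodexist} with $\mc V = \{X_{-1},X_0,X_1,\dots,X_n\}$ to produce a good stratification $\mc S$ of $X$ such that each $X_j$ is a union of strata of $\mc S$. For any $S\in\mc S$, compatibility forces $S$ to be either contained in $X_j$ or disjoint from it; letting $j$ be the smallest index with $S\subset X_j$, we get $S\subset X_j\setminus X_{j-1}$. Since the connected components of $X_j\setminus X_{j-1}$ are pairwise disjoint and open in $X_j\setminus X_{j-1}$, and $S$ is connected, $S$ lies in a single component, i.e.\ in a single stratum of $\mc S_0$. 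Hence $\mc S$ is a good refinement of $\mc S_0$.

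For part (b), given two good algebraic stratifications $\mc S_1$ and $\mc S_2$ associated with filtrations $\{X_j^{(1)}\}$ and $\{X_j^{(2)}\}$, I would apply Theorem \ref{goodexist} with $\mc V$ equal to the union of these two finite collections of subvarieties. The resulting good stratification $\mc S$ is compatible with every $X_j^{(i)}$, and exactly the same connectedness argument as above shows that every stratum of $\mc S$ is contained in a stratum of $\mc S_1$ and in a stratum of $\mc S_2$, so $\mc S$ is a common good refinement. There is no serious obstacle here: the whole corollary is a bookkeeping consequence of the existence theorem, and the only point that needs attention is the interpretation of ``compatible with $\mc V$'' as ``each $V\in\mc V$ is a union of strata,'' which is the convention of \cite{mpp} used in Theorem \ref{goodexist}.
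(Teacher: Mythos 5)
Your proof is correct and takes essentially the same route as the paper: apply Theorem~\ref{goodexist} to the filtration(s). The only cosmetic difference is in part (b), where the paper takes $\mc V = \{X_i \cap X'_j\}$ while you take the union $\{X_i\}\cup\{X'_j\}$; these give the same compatibility conditions, and your explicit connectedness argument for why compatibility yields refinement is a routine point the paper leaves implicit.
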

\begin{proof}
(a) If the  stratification is associated to the filtration $X=X_n\supset X_{n-1} \supset \cdots \supset X_0$,
we apply the theorem with $\mc V =\{X_i\}$.

(b) If the  stratifications $\mc S$ and $\mc S'$ are associated to the filtrations $\{X_i\}$ and $\{X'_j\}$, respectively, we apply the theorem with $\mc V =\{X_i\cap X'_j\}$.
\end{proof}

Recall the  \emph{frontier condition} for a stratification $\mc S = \{S_i\}$: If $S_1$ and $S_2$ are disjoint strata of $\mathcal S$ such that $S_1$ intersects the closure of $S_2$, then $S_1$ is contained in the closure of $S_2$.

\begin{prop}\label{goodrefinesemialg} Let $X$ be a subvariety of $\proj^r$.
Every semialgebraic stratification $\mc S$ of $X$ satisfying the frontier condition has a good refinement.
\end{prop}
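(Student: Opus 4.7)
The plan is to reduce Proposition~\ref{goodrefinesemialg} to Theorem~\ref{goodexist} by encoding the semialgebraic structure of $\mc S$ in a finite family of algebraic subvarieties that separates the strata.

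First, I would exploit the finiteness of $\mc S$ to choose real polynomials $f_1,\ldots,f_N$ (either in an affine-chart cover of $\proj^r$ or as even-degree homogeneous polynomials on $\R^{r+1}$, so that their signs descend to $\proj^r$) with the property that each stratum $S\in\mc S$ is a Boolean combination of the sign conditions $\{f_i>0\}$, $\{f_i=0\}$, $\{f_i<0\}$. Equivalently, the sign-vector map $\sigma\colon X\to\{-,0,+\}^N$, $\sigma(x)=(\sgn f_1(x),\ldots,\sgn f_N(x))$, is constant on each stratum of $\mc S$, and points lying in different strata receive different sign vectors.

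Next, I would set $V_i=\{f_i=0\}\cap X$ and apply Theorem~\ref{goodexist} to the family $\mc V=\{V_1,\ldots,V_N\}$ of algebraic subvarieties of $X$. This produces a good algebraic stratification $\mc S'$ of $X$ compatible with $\mc V$: every stratum $T$ of $\mc S'$ is either contained in $V_i$ or disjoint from $V_i$, for each $i$. To conclude that $\mc S'$ refines $\mc S$, fix a stratum $T$ of $\mc S'$. Compatibility with $\mc V$ says that each $f_i$ either vanishes identically on $T$ or is nowhere zero on $T$; in the latter case, continuity of $f_i$ together with the connectedness of $T$ forces $f_i$ to have constant sign on $T$ by the intermediate value theorem. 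Hence the sign vector $\sigma$ is constant on $T$, and by the choice of the $f_i$ all points of $T$ lie in one and the same stratum $S\in\mc S$, so $T\subset S$.

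The one substantive technical point is the first step, namely producing such a finite list of polynomials globally on $X\subset\proj^r$; this requires a modest amount of care with homogenization (to ensure well-defined signs on projective space) or with bookkeeping across affine charts. After that, everything follows formally from Theorem~\ref{goodexist} and a continuity argument. The frontier condition on $\mc S$ is not used explicitly in the argument above, though it is needed implicitly for $\mc S$ to be a bona fide stratification with closed skeleta, which is what gives the notion of ``good refinement'' its meaning.
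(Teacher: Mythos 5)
Your proposal is correct, but it takes a genuinely different route from the paper. The paper's proof filters $X$ by the skeletons $X_j$ of $\mc S$, takes their Zariski closures, applies Theorem~\ref{goodexist} to $\mc V = \{\overline{X_j}\}$, and then runs a delicate inductive point-set argument to show the resulting stratification refines $\mc S$; that induction relies essentially on the frontier condition (which guarantees $\bd S$ is a union of lower-dimensional strata). You instead encode the full semialgebraic data of the strata via sign conditions on finitely many polynomials $f_1,\dots,f_N$, apply Theorem~\ref{goodexist} to the zero sets $V_i = \{f_i = 0\}\cap X$, and obtain refinement from a one-line continuity-plus-connectedness argument: since each stratum $T$ of the new stratification either lies in $V_i$ or avoids it, $\sigma$ is constant on $T$, hence $T$ lies in one fiber of $\sigma$ and therefore in one stratum of $\mc S$. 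This avoids the frontier condition entirely and in fact proves the stronger statement that \emph{any} finite semialgebraic partition of $X$ admits a good refinement — effectively Corollary~\ref{semigood} directly, from which the paper's Proposition~\ref{goodrefinesemialg} follows as a special case. Your route is arguably cleaner; the price is the (minor, and correctly flagged) bookkeeping needed to write semialgebraic subsets of projective space via polynomials with well-defined signs, which can be sidestepped by using an affine embedding $\proj^r \hookrightarrow \R^M$ as the paper's remark after Corollary~\ref{semigood} permits.

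Two small comments. First, the claimed equivalence between ``each stratum is a Boolean combination of sign conditions'' and ``$\sigma$ is constant on each stratum and separates strata'' is not literally an equivalence — a Boolean combination is merely a \emph{union} of fibers of $\sigma$, on which $\sigma$ need not be constant. But your final step only uses the weaker fact that each stratum is a union of fibers of $\sigma$: a new stratum $T$ with constant sign vector $v$ lies in $\sigma^{-1}(v)$, and if a stratum $S\in\mc S$ meets $\sigma^{-1}(v)$ then it contains it, so $T\subset S$. So the argument is sound as written. Second, your final remark about the frontier condition is slightly off: it is not needed for your argument at all, explicitly or implicitly — your proof works for any finite partition of $X$ into semialgebraic sets. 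The paper includes the hypothesis because its proof strategy requires it.
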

\begin{proof}
Let $X=X_n\supset X_{n-1} \supset \cdots \supset X_0$ be the filtration of $X$ by the skeletons of $\mc S$, where $\dim X_j = j$ or $X_j = X_{j-1}$. Let $\overline X_j$ be the Zariski closure of $X_j$ and let $\mc V = \{\overline X_j\}$. By Theorem \ref{goodexist} there is a good stratification $\mc R$ of $X$ compatible with $\mc V$. We claim that $\mc R$ is a refinement of $\mc S$. 

We show by induction on $k$ that every stratum of $\mc S$ of dimension $\leq k$ is a union of strata of $\mc R$. This is true for $k=0$, since $\overline X_0 = X_0$. Now suppose that every stratum of $\mc S$ of dimension $< k$ is a union of strata of $\mc R$, and let $S$ be a $k$-dimensional stratum of $\mc S$. We have that $\bd S=\cl S \setminus S$ is a union of strata of $\mc R$, by the frontier condition for $\mc S$ and inductive hypothesis.

Consider the partition $P_S$ of $S$ induced by the strata of $\mc R$. We have $P_S = \bigsqcup U_i$, where $U_i$ is a connected component of $R_i\cap S$, with $R_i$ a (connected) stratum of $\mc R$. 
Let $U$ be a non-empty $k$-dimensional element of $P_S$. Then $U \subset R\cap S$, where $R$ is a stratum of $\mc R$ of dimension $\geq k$. We claim that $R=U$. 
Now $U\subset X_k \subset \overline X_k$, so if $\dim R>k$ then $R$ is not compatible with $\mc V=\{\overline X_j\}$. Thus $\dim R = k$. So $U$ is an open connected nonsingular $k$-dimensional subset of the connected nonsingular $k$-dimensional semialgebraic set $R$. Consider $\bd U = \cl U\setminus U$. Now $\bd U \subset \cl S = S\cup \bd S$. We have $(\bd U\cap R)\cap \bd S = \emptyset$, since $R\cap \bd S = \emptyset$. On the other hand, $(\bd U\cap R)\cap S = \bd U\cap (R\cap S)=\emptyset$, since $U$ is a connected component of  $R\cap S$. Therefore $\bd U \cap R = \emptyset$, which implies that $R=U$, since $U$ is a connected open subset of the connected set $R$.

Now suppose that $U'$ is a non-empty element of $P_S$ of dimension $<k$. Then $U' \subset R'\cap S$, where $R'$ is a stratum of $\mc R$. Now $U'\cap \cl U\neq\emptyset$ for some $k$-dimensional element $U$ of $P_S$. Since we have shown $U\in \mc R$, and $\mc R$ satisfies the frontier condition, we conclude that $R'$ is contained in the closure of $U$. Thus $R'$ is a connected subset of $S$, so $R' = U'$.
\end{proof}

\begin{cor}\label{semigood}
Let $X$ be a subvariety of $\proj^r$, and let $\mc V$ be a finite family of semialgebraic subsets of $X$. There exists a good stratification $\mc S$ of $X$ compatible with all  $V\in\mc V$.\qed
\end{cor}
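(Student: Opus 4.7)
The plan is to reduce the semialgebraic case to Proposition \ref{goodrefinesemialg} by first producing an auxiliary semialgebraic stratification compatible with $\mc V$ and satisfying the frontier condition, and then invoking that proposition to obtain a good refinement.

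First I would apply a standard result from semialgebraic geometry: given any finite family $\mc V$ of semialgebraic subsets of $X$, there exists a semialgebraic stratification $\mc S_0$ of $X$ such that each $V\in\mc V$ is a union of strata of $\mc S_0$, each stratum is a connected nonsingular semialgebraic set, and $\mc S_0$ satisfies the frontier condition. Such a stratification can be constructed by cylindrical decomposition (or by Whitney stratification of the semialgebraic sets in $\mc V$ together with $X$), followed by passing to connected components. This is entirely standard and does not require any new argument.

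Next I would apply Proposition \ref{goodrefinesemialg} to $\mc S_0$ to obtain a good algebraic stratification $\mc S$ of $X$ that refines $\mc S_0$. Since each $V\in\mc V$ is a union of strata of $\mc S_0$, and $\mc S$ refines $\mc S_0$, it follows that each $V\in\mc V$ is a union of strata of $\mc S$. Hence $\mc S$ is compatible with every $V\in\mc V$, as required.

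I do not expect any real obstacle here; the content of the corollary is already concentrated in Proposition \ref{goodrefinesemialg}, and the only ingredient added is the well-known fact that a finite family of semialgebraic subsets admits a compatible semialgebraic stratification satisfying the frontier condition. The main point worth stating explicitly in the proof is just that the refinement relation transfers compatibility with $\mc V$ from $\mc S_0$ to $\mc S$.
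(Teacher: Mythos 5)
Your proposal is correct and is exactly the intended argument: take a semialgebraic Whitney stratification of $X$ (into connected strata, hence satisfying the frontier condition) compatible with $\mc V$, apply Proposition \ref{goodrefinesemialg} to get a good refinement $\mc S$, and observe that refinement preserves the property that each $V\in\mc V$ is a union of strata. The paper leaves the proof unwritten (marked $\qed$) precisely because it follows immediately from Proposition \ref{goodrefinesemialg} in this way.
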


\begin{prop}\label{genpos}
Let $\mc S$ be a good stratification of $X\subset \proj^r$, and let $\Psi: U\times \proj^r\to \proj^r$ be a submersive deformation of the identity of $(\proj^r,\mc T)$, where $\mc S = \mc T|X$.  Let $\Psi_t(x) = \Psi(t,x)$. Let $Z$ and $W$ be semialgebraic subsets of $X$.

(a)  There is an open dense semialgebraic subset $U'$ of $U$ such that, for all $t \in U'$ and all strata $S\in \mc S$,
 $$
\dim   (\Psi_t(Z) \cap W \cap S) \le  \dim (Z\cap S)  + \dim (W\cap S)  - \dim S. 
$$

(b) Suppose there are semialgebraic stratifications $\mc A$ of $Z$ and $\mc B$ of $W$ such that $(Z,\mc A)$ and $(W,\mc B)$ are substratified objects of $(X,\mc S)$. There is an open dense semialgebraic subset $U'$ of $U$ such that, for all $t \in U'$, $\Psi_t(Z,\mc A)$ is transverse to $(W,\mc B)$ in $(X,\mc S)$.
\end{prop}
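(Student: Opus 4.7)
The plan is to reduce both parts to a parametric semialgebraic Sard argument carried out one stratum of $\mc S$ at a time, using the fact that $\Psi$ restricts on each stratum to a submersive family of diffeomorphisms.

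First I would refine $Z$ and $W$ by semialgebraic stratifications $\{Z_\alpha\}$ and $\{W_\beta\}$ compatible with $\mc S$; for (b) I take $\mc A$ and $\mc B$ themselves. Because $\Psi_t$ preserves each stratum $S \in \mc S$, one has $\Psi_t(Z) \cap W \cap S = \bigsqcup_{\alpha,\beta} \Psi_t(Z_\alpha) \cap W_\beta$ with $Z_\alpha, W_\beta \subset S$, so it suffices to produce, for each such pair, an open dense semialgebraic $U'_{\alpha,\beta} \subset U$ on which the required dimension estimate (for (a)) or transversality (for (b)) holds; the finite intersection $U' = \bigcap U'_{\alpha,\beta}$ will then do the job.

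Fix a pair $Z_\alpha, W_\beta \subset S$ and consider the semialgebraic map
$$F : U \times Z_\alpha \longrightarrow S, \qquad F(t,x) = \Psi_t(x).$$
The surjectivity of $D\Psi^x$ at every $(t,x) \in U \times S$, built into the definition of a submersive deformation, makes $F$ a submersion at every smooth point of $U \times Z_\alpha$, since the $U$-direction alone already surjects onto $TS$. Hence $F^{-1}(W_\beta)$ is semialgebraic of dimension $l + \dim Z_\alpha + \dim W_\beta - \dim S$. Applying the semialgebraic version of Sard's theorem to the restriction of the projection to $U$ yields an open dense semialgebraic $U'_{\alpha,\beta} \subset U$ such that the fiber $\Psi_t^{-1}(W_\beta) \cap Z_\alpha$ over $t$ has dimension at most $\dim Z_\alpha + \dim W_\beta - \dim S$. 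Since $\Psi_t$ is a stratum-preserving diffeomorphism of $S$, this is also the dimension of $\Psi_t(Z_\alpha) \cap W_\beta$, proving (a). The transversality-of-fiber form of the same Sard argument shows that $\Psi_t(Z_\alpha)$ meets $W_\beta$ transversally inside $S$ for each such $t$, which is exactly the substratified transversality required for (b).

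The principal point to verify carefully, rather than a deep obstacle, is that $F$ really is a submersion at every point of $U \times Z_\alpha$ (and not merely generically so). This is immediate from the definition of a submersive deformation of the identity: the surjectivity of $D\Psi^x$ at $t$ is required for every $x \in S$ and every $t \in U$, and the contribution from the $U$-factor to $DF$ is independent of whether the base point lies in $Z_\alpha$ or elsewhere in $S$. Once this is in hand, the finiteness of the stratifications $\mc S$, $\{Z_\alpha\}$, $\{W_\beta\}$ ensures that $U' = \bigcap_{\alpha,\beta} U'_{\alpha,\beta}$ remains open, dense, and semialgebraic, completing both parts.
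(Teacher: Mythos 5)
The paper's own proof is a one-line citation: ``This follows from Propositions 1.3 and 1.5 of \cite{mpp}.'' So there is no in-paper argument to match; you have instead supplied a self-contained reconstruction, and it is essentially the standard parametric (Thom-type) transversality argument that such a citation stands for. Your reduction to pairs $(Z_\alpha,W_\beta)$ contained in a common stratum $S$, the observation that $F(t,x)=\Psi_t(x)$ is a submersion from $U\times Z_\alpha$ to $S$ because $D\Psi^x$ alone surjects onto $T_{\Psi_t(x)}S$, and the passage to the projection $p\colon F^{-1}(W_\beta)\to U$ are all correct and in the right shape; taking the finite intersection of the $U'_{\alpha,\beta}$ finishes both parts.

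Two places could be tightened. For (a), the step from $\dim F^{-1}(W_\beta)\le l+\dim Z_\alpha+\dim W_\beta-\dim S$ to the generic fiber bound is not Sard's theorem but the semialgebraic fiber-dimension theorem: for a semialgebraic map $p\colon A\to B$ the set $\{b:\dim p^{-1}(b)\ge d\}$ has dimension at most $\dim A-d$, so the locus of $t$ with too-large fiber has dimension $<l$ and its complement's interior gives $U'_{\alpha,\beta}$. Sard controls regular values, not fiber dimensions, though in (b) it is genuinely Sard (in semialgebraic form) that you need. For (b), the phrase ``transversality-of-fiber form of the same Sard argument'' compresses the actual content: $t$ is a regular value of $p\colon F^{-1}(W_\beta)\to U$ precisely when $\Psi_t|_{Z_\alpha}$ is transverse to $W_\beta$ in $S$, and in the semialgebraic category the critical values of $p$ form a semialgebraic set of positive codimension, hence have open dense semialgebraic complement. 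Spelling these two points out would make the proof complete; as written the ideas are right but the invocation of ``Sard'' in (a) is a mislabel.
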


\begin{proof}
This follows from Propositions 1.3 and 1.5 of \cite{mpp}.
\end{proof}

\begin{rem}
It follows from \cite{mpp} (Cor. 3.2) that all the results of this section hold for affine real algebraic varieties $X$, replacing $\proj^r$ by $\R^r$ throughout.
\end{rem}


\section{Allowable chains and intersection homology} 
\label{Chains} 

Let $X$ be a semialgebraic set. The complex of semialgebraic chains of $X$ has the following geometric definition (see \cite{mccpar2}, Appendix). For $k\geq 0$, let $S_k(X)$ be the $\Z/2$ vector space generated by the closed semialgebraic subsets of $X$ of dimension $\leq k$. The chain group $C_k(X)$ is the quotient of $S_k(X)$ by the following relations:

\vskip.1in
(i) If $A$ and $B$ are closed semialgebraic subsets of $X$ of dimension at most $k$ then
$$
A+B \sim \cl(A\div B),
$$
where $A\div B = (A\cup B)\setminus(A\cap B) = (A\setminus B)\cup (B\setminus A)$ is the symmetric difference of $A$ and $B$, and $\cl$ denotes closure.

(ii) If $A$ is a closed semialgebraic subset of $X$ and $\dim A < k$, then $A\sim 0$.

\vskip.1in
If the $k$-chain $\Gamma$ is represented by the semialgebraic set $C$, we write $\Gamma =[C]$. The \emph{support} of $\Gamma$, denoted $\supp \Gamma$, is the smallest closed semialgebraic set representing $\Gamma$. If $\Gamma=[C]$ then $\supp \Gamma = \{x\in C\ ;\ \dim_x C = k \}$. If $X$ is not compact, we may consider the group $C^c_k(X)$ of $k$-chains with compact support, which is a subgroup of $C_k(X)$.

\emph{Note}: To simplify notation, we often identify the $k$-dimensional semialgebraic set $C$ with its equivalence class in the group of chains. So we may refer to  $C$ as a ``$k$-chain,'' or as a ``$k$-cycle'' if $\p C\sim 0$. 

The boundary operator $\p_k: C_k(X)\to C_{k-1}(X)$ is defined using the Euler characteristic of the link: $\p[C] = [\p C]$, where
$$
\p C = \supp[\cl\{x\in C\ ;\  \chi (\lk (C,x)) \not
\equiv 0 \pmod 2\}]
$$
\begin{rem}
This corrects the definition of $\p C$ in \cite{mccpar2}, where we omitted the closure operator. Adding the support to the definition is not essential, but it will simplify several arguments.
\end{rem}
The following characterization of the boundary operator follows from the definition.
\begin{prop}
If $\mathcal T$ is a semialgebraic triangulation of the closed $k$-dimensional semialgebraic set $C$, then $\p C$ is the union of the closed $(k-1)$-simplices $S\in\mathcal T$ such that the number of $k$-simplices of $\mathcal S$ incident to $S$ is odd.\qed
\end{prop}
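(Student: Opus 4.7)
The plan is to reduce the statement to a direct Euler characteristic computation on each open simplex of $\mathcal T$. Since $\mathcal T$ is a triangulation, for any point $x$ in the interior of a $j$-simplex $\sigma$, the set $C$ is locally homeomorphic to $\R^j\times\operatorname{cone}(L_\sigma)$, where $L_\sigma$ denotes the simplicial link of $\sigma$ in the subcomplex of $\mathcal T$ supported on $C$. Consequently the set-theoretic link satisfies $\lk(C,x)\cong S^{j-1}*L_\sigma$, so $\chi(\lk(C,x))$ depends only on $\sigma$ and not on the interior point $x$. In particular
\[
B \;:=\; \{x\in C : \chi(\lk(C,x)) \not\equiv 0 \pmod 2 \}
\]
is a union of open simplices of $\mathcal T$, and we are reduced to computing the parity of $\chi(\lk(C,x))$ one open simplex at a time.

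Next I would dispose of the top two dimensions. For $x$ interior to a $k$-simplex, $L_\sigma=\emptyset$ so $\lk(C,x)\cong S^{k-1}$ and $\chi=1+(-1)^{k-1}$ is even; hence $B$ meets no open $k$-simplex, so $\dim B\le k-1$. For $x$ interior to a $(k-1)$-simplex $\sigma$, the link $L_\sigma$ is a discrete set of $m$ points, one for each $k$-simplex of $\mathcal T$ incident to $\sigma$, and $\lk(C,x)\cong S^{k-2}*\{m\text{ pts}\}$. Applying the join formula $\chi(X*Y)=1-(\chi(X)-1)(\chi(Y)-1)$ yields
\[
\chi(\lk(C,x)) \;=\; 1-(-1)^k(m-1) \;\equiv\; m \pmod 2
\]
for either parity of $k$. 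Thus an open $(k-1)$-simplex lies in $B$ if and only if the number of incident $k$-simplices is odd.

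Finally, decompose $B = A^{\circ} \sqcup B'$, where $A^{\circ}$ is the union of the open $(k-1)$-simplices with odd incidence count and $B'$ lies entirely in simplices of dimension $\le k-2$. Then $\cl B = A \cup \cl(B')$, with $A$ the union of the corresponding closed $(k-1)$-simplices and $\dim\cl(B') \le k-2$. By chain relation (ii) in the definition of $C_{k-1}(X)$, $[\cl B]=[A]$ as $(k-1)$-chains; since $A$ is purely $(k-1)$-dimensional, $\supp[A]=A$, and therefore $\p C = A$, as claimed. The main technical step is the join computation on $(k-1)$-simplex interiors; everything else follows from local constancy of the link type on each open simplex and from the chain equivalence that kills contributions of dimension less than $k-1$.
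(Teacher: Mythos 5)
Your proof is correct. The paper gives no proof for this proposition (it is stated to follow from the definition, with a bare \qed), so there is nothing to compare against; your argument fills in the details via exactly the expected route: locally constant link type on open simplices, the join formula for Euler characteristics to compute the parity on the interiors of $(k-1)$-simplices, and the support operation to discard the lower-dimensional part of $B$. One small remark: the final step is slightly more direct if you observe that $\supp[\cl B]$ is by definition the set of points where $\cl B$ has local dimension $k-1$; since $\cl(B')$ has dimension $\le k-2$ and $A$ is closed and purely $(k-1)$-dimensional, this locus is exactly $A$, without needing to invoke the chain relation (ii) and the equality $[\cl B]=[A]$.
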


The following result shows that the boundary operator is well-defined on semialgebraic chains. (We overlooked this result in \cite{mccpar2}.)

\begin{prop}
$\p\cl(A\div B) =  \cl(\p A\div \p B)$.
\end{prop}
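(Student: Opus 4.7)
The plan is to fix a common semialgebraic triangulation of $A\cup B$ and reduce the identity to a modulo-$2$ incidence count. By the semialgebraic triangulation theorem, choose a finite simplicial decomposition $\mathcal T$ of $A\cup B$ in which both $A$ and $B$ are subcomplexes; then $A\cap B$ is a subcomplex, and the $k$-simplices of $\cl(A\div B)$ in $\mathcal T$ are precisely those $k$-simplices of $\mathcal T$ lying in exactly one of $A$, $B$ (since an open $k$-cell $\sigma^\circ$ is contained in $\cl(A\div B)$ iff it is contained in $A\div B$). I can now invoke the triangulation characterization of $\partial$ from the preceding proposition.

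For each $(k-1)$-simplex $\sigma\in\mathcal T$ let $n_A(\sigma)$, $n_B(\sigma)$, $n_{AB}(\sigma)$ denote the number of incident $k$-simplices of $\mathcal T$ lying in $A$, $B$, $A\cap B$ respectively. The count of incident $k$-simplices of $\cl(A\div B)$ is $n_A(\sigma)+n_B(\sigma)-2\,n_{AB}(\sigma)\equiv n_A(\sigma)+n_B(\sigma)\pmod 2$, so the proposition yields
$$
\partial\,\cl(A\div B)\;=\;Q\;:=\;\bigcup\bigl\{\,\sigma\text{ a }(k-1)\text{-simplex of }\mathcal T\,:\,n_A(\sigma)+n_B(\sigma)\text{ odd}\,\bigr\},
$$
while $\partial A$ and $\partial B$ are the unions of $(k-1)$-simplices with $n_A(\sigma)$, resp.\ $n_B(\sigma)$, odd. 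Since $Q$ is manifestly closed, the statement reduces to the two inclusions (i) $Q\subset\cl(\partial A\div\partial B)$ and (ii) $\partial A\div\partial B\subset Q$. For (i): any $\sigma\subset Q$ satisfies exactly one of $\sigma\subset\partial A$, $\sigma\subset\partial B$, say $\sigma\subset\partial A\setminus\partial B$; then $\sigma^\circ\subset\partial A$ and $\sigma^\circ$ is disjoint from $\partial B$ (a point of $\sigma^\circ$ in $\partial B$ would lie in a $(k-1)$-simplex of $\partial B$ containing $\sigma$ as a face, forcing equality with $\sigma$, contradiction). Hence $\sigma^\circ\subset\partial A\div\partial B$ and $\sigma=\cl(\sigma^\circ)\subset\cl(\partial A\div\partial B)$. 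For (ii): given $x\in\partial A\setminus\partial B$ (the other case is symmetric), let $\tau\in\mathcal T$ be the simplex whose relative interior contains $x$; since $\partial A$ is a union of closed $(k-1)$-simplices, some $\sigma_0\supset\tau$ lies in $\partial A$, while $x\notin\partial B$ forbids any $(k-1)$-simplex $\supset\tau$ from lying in $\partial B$, so in particular $\sigma_0\not\subset\partial B$. Hence $n_A(\sigma_0)$ is odd, $n_B(\sigma_0)$ is even, and $x\in\tau\subset\sigma_0\subset Q$.

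The only delicate point is the translation between the simplex-level condition $\sigma\subset\partial A$ (equivalent to $n_A(\sigma)$ odd) and the point-level condition $x\in\partial A$ (existence of such a $\sigma$ through $x$); this equivalence rests on the basic simplicial-complex fact that a point in the relative interior of a simplex $\tau$ lies in a closed simplex precisely when that simplex contains $\tau$ as a face. Beyond this piece of bookkeeping the argument is a short inclusion-exclusion modulo $2$, and I foresee no genuine obstacle.
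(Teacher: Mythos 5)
Your proof is correct and takes essentially the same approach as the paper: triangulate $A\cup B$ so that $A$ and $B$ are subcomplexes, count incident $k$-simplices mod $2$ at each $(k-1)$-simplex, and observe that both sides consist of the $(k-1)$-simplices where $n_A+n_B$ is odd. The only difference is that you spell out in full the point-level versus simplex-level bookkeeping for the inclusion $\partial A\div\partial B\subset Q\subset\cl(\partial A\div\partial B)$, which the paper asserts without elaboration.
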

\begin{proof}
Let $\dim A = \dim B=k$. The sets $\p\cl(A\div B)$ and $\cl(\p A\div \p B)$ are contained in $A\cup B$. Let $\mathcal T$ be a semialgebraic triangulation of $A\cup B$ such that $A$ and $B$ are unions of simplices. Let $S$ be a $(k-1)$-simplex of $\mathcal T$. If $Y$ is a union of simplices of $\mathcal T$, let $\langle Y,S\rangle$ denote the number of $k$-simplices of $Y$ incident to $S$. Let $a=\langle A,S\rangle$, $b=\langle B,S\rangle$, and $c=\langle A\cap B, S\rangle$. Then $\langle A\setminus B, S\rangle = a-c$ and $\langle B\setminus A, S\rangle = b-c$, so $\langle A\div B, S \rangle = (a-c) + (b-c) = a+b-2c$. So $S\subset\p\cl(A\div B)$ if and only if $a+b$ is odd. On the other hand, $S\subset\p A$ if and only if $a$ is odd, and $S\subset\p B$ if and only if $b$ is odd. So $S\subset\cl(\p A\div \p B)$ if and only if $a+b$ is odd.
\end{proof}

We have $\p(C^c_k(X))\subset C^c_{k-1}(X)$, and the homology of the chain complex $(C^c_*(X), \p)$ is canonically isomorphic to the simplicial  homology $H_*(X;\Z/2)$ with respect to a (locally finite) semialgebraic triangulation. The homology of $(C_*(X), \p)$ is canonically isomorphic to the homology $H^{cl}_*(X;\Z/2)$ with closed supports (Borel-Moore homology) (\emph{cf.} \cite{bcr} \S 11.7).

Let $Y$ be a closed semialgebraic subset of $X$. The relative homology of $X$ mod $Y$ can be described as follows. Let $C^c_k(Y)$ be the subcomplex of $C^c_k(X)$ consisting of semialgebraic chains $\Gamma$ of $X$ such that $\supp \Gamma \subset Y$. Let $C^c_k(X,Y) = C^c_k(X)/C^c_k(Y)$. The homology of the chain complex $C^c_k(X,Y)$ is canonically isomorphic to the relative simplicial homology group $H_k(X,Y;\Z/2)$.

\begin{prop}\label{openH}
Let $V$ be a semialgebraic open subset of the compact semialgebraic set $X$. For $k\geq 0$ there is a canonical isomorphism 
$$
H^{cl}_k(V;\Z/2) \cong H_k(X,X\setminus V;\Z/2).
$$
\end{prop}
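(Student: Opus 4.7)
The plan is to construct a chain-level isomorphism $\phi\colon C_k(V) \to C_k(X)/C_k(X\setminus V)$ and then pass to homology. Since $X$ is compact, every closed semialgebraic subset of $X$ (and of $X\setminus V$) is compact, so $C_*(X)=C^c_*(X)$ and the target really does compute $H_k(X,X\setminus V;\Z/2)$ by the identification recalled in the excerpt. On representatives I will send a closed semialgebraic set $C\subset V$ of dimension at most $k$ to $[\cl_X C]$, the class in the quotient of its closure in $X$.

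The first step is well-definedness. The elementary identity $\cl_X(E)\cap V = \cl_V(E)$ for $E\subset V$, valid because $V$ is open in $X$, does most of the work. If $A,B\subset V$ are closed semialgebraic of dimension $\le k$, then both $\cl_X(\cl_X A\div \cl_X B)$ and $\cl_X\cl_V(A\div B)$ restrict to $\cl_V(A\div B)$ on $V$, so their symmetric difference is a closed semialgebraic subset of $X\setminus V$. Hence the defining relation $A+B\sim \cl_V(A\div B)$ of $C_k(V)$ is sent to a genuine relation modulo $C_k(X\setminus V)$. The vanishing $A\sim 0$ for $\dim A<k$ is automatic since $\dim\cl_X A = \dim A$.

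The second step is to verify $\phi$ is a chain map. For any $x\in V$ the link $\lk(\cl_X C, x)$ coincides with $\lk(C,x)$, since $V$ is open in $X$. Writing $F_X=\{x\in \cl_X C:\chi(\lk(\cl_X C,x))\text{ odd}\}$ and $F_V=\{x\in C:\chi(\lk(C,x))\text{ odd}\}$, one gets $F_V=F_X\cap V$. The same closure-intersection identity then yields $\cl_X F_X \cap V = \cl_V F_V$, and applying the support operator gives $(\p_X \cl_X C)\cap V = \p_V C$. Consequently $\p_X(\phi[C])$ and $\phi(\p_V C)=[\cl_X \p_V C]$ differ only on $X\setminus V$ by a closed semialgebraic set, hence represent the same class in $C_{k-1}(X)/C_{k-1}(X\setminus V)$.

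The final step is bijectivity at the chain level. For injectivity: if $[\cl_X C]=0$ in the quotient, then $\cl_X C$ is represented by a closed semialgebraic subset of $X\setminus V$, so the $k$-dimensional part of $\cl_X C$ is disjoint from $V$; intersecting with $V$ forces $\dim C<k$, whence $[C]=0$ in $C_k(V)$. For surjectivity: given closed semialgebraic $D\subset X$ of dimension $\le k$, the set $C:=D\cap V$ is closed semialgebraic in $V$ of dimension $\le k$, and $\cl_X C\div D$ is a closed semialgebraic subset of $X\setminus V$, so $[\cl_X C]=[D]$ in the quotient. There is no deep obstacle here; the only point requiring care is the consistent bookkeeping of closures in $V$ versus closures in $X$, which is handled uniformly by the open-set identity above. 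Since $\phi$ is an isomorphism of chain complexes, the claimed isomorphism on homology follows, and it is canonical because $\phi$ is defined directly by the closure operation.
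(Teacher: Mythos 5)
Your proposal is correct and takes essentially the same approach as the paper, which states in one line that $C\mapsto\overline C$ (closure in $X$) induces an isomorphism of chain complexes $C^{cl}_*(V)\cong C^c_*(X,X\setminus V)$; you simply spell out the well-definedness, chain-map, and bijectivity checks that the paper leaves to the reader. The only minor imprecision is that $\cl_X C\div D$ need not itself be closed, but its closure in $X$ is contained in the closed set $X\setminus V$, which is what the relation $A+B\sim\cl(A\div B)$ actually requires.
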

\begin{proof}
If $C$ is a closed semialgebraic subset of $V$, let $\overline C$ be the closure of $C$ in $X$. The function $C\mapsto \overline C$ induces an isomorphism of chain complexes $C^{cl}_*(V)\cong C^c_*(X,X\setminus V)$.
\end{proof}

For the rest of this section we assume that $X$ is a real algebraic variety.

\vskip.1in

Arc-symmetric sets were introduced by Kurdyka \cite{kurdyka1} (see also \cite{kp}). Let $C\subset X$ be a closed semialgebraic set of dimension $k$. We say that $C$ is {\em arc-symmetric in dimension} $k$ at 
$x\in C$ if there is an open neighborhood $U$ of $x$ in $C$ and 
a semialgebraic set $V\subset U$, $\dim V < k$, such that for
every real analytic curve $\gamma : (-1,1) \to X$, if 
$\gamma ((-1,0)) \subset U\setminus V$ and $\gamma (0)\in U$,  
then $\gamma ((0,\varepsilon)) \subset U\setminus V$ for some 
$\varepsilon >0$.  In terms of the arc-symmetric closure of
\cite{kurdyka1} this
condition means that $\dim _x (\as C \setminus C)<k$.  

For instance the closure of the regular part of an algebraic 
set $Y$ is always arc-symmetric in $\dim Y$ at every point 
of $Y$, but $Y$ is not necessarily arc-symmetric (\emph{e.g.}\ the Whitney umbrella $wx^2=yz^2$).  A segment or a half-line are examples of semialgebraic sets that are not arc-symmetric in dimension $1$.  

If $C$ is a $k$-dimensional closed semialgebraic subset of $X$, we define the \emph{pseudoboundary} by 
\begin{equation*}
\Sigma C = \{x\in C\ ; \text { $C$ is not arc-symmetric in 
dimension $k$ at $x$}\}.
\end{equation*} 
$\Sigma C$ is a closed semialgebraic set,  and $\dim \Sigma C < 
\dim C$.  By Proposition \ref{boundary} of the Appendix,  
\begin{equation}\label{sigma}
\p C \subset \Sigma C.
\end{equation}
 Moreover, $\Sigma C$ depends only on $[C] \in C_k(X)$, since  $\Sigma C$ is determined by the support of $[C]$. Note that $\Sigma (\Sigma C)$ is not necessarily empty (consider for instance a quadrant of $\R^2$).

Let $X$ be a subvariety of $\proj^r$ (or $\R^r$), and let $\mc S$ be a good stratification of $X$.  
For a closed semialgebraic set  $C$ of dimension $k$ we consider the following \emph{perversity  
conditions} with respect to the stratification $\mc S$:
\begin{align}\label{perv1}
\dim C\cap S &< \dim C - \half \codim S \\ 
\label{perv2}
\dim \Sigma C \cap S &\le \dim C - \half \codim S - 1 
\end{align}
for all strata $S$ such that $\codim S = \dim X - \dim S >0$.
(A set is empty if it has negative dimension.)  If $\codim S$  is even then 
\eqref{perv2} is a consequence of \eqref{perv1}, which is 
the standard middle perversity condition for intersection homology \cite{gm1}.

A semialgebraic $k$-chain $\Gamma$ of $X$ 
is called  \emph{allowable} (with respect to the stratification 
$\mc S$) if $\Gamma = [C]$, where $C$ and $\p C$ 
satisfy the above perversity conditions.  
The set of allowable $k$-chains (resp., allowable $k$-chains with compact support) is a subgroup of $C_k(X)$ (resp., $C^c_k(X)$). Since $\cl(A\div B) \subset A\cup B$, if two $k$-chains satisfy (\ref{perv1}), then so does their sum. Furthermore, $\Sigma\cl(A\div B)\subset \Sigma A\cup \Sigma B$, so if two $k$-chains satisfy (\ref{perv2}) their sum does also. 

By definition the boundary operator $\p$ 
preserves allowable chains. The \emph{intersection homology groups} $\ih _k ^{c,\mc S} (X)$ (resp., $\ih _k ^{cl,\mc S} (X)$), $k=0 ,\dots,n$, are the homology groups of the complex of allowable chains with compact supports (resp., closed supports) with respect to the good stratification $\mc S$.
If $X$ is a subvariety of $\proj^r$ these groups are equal, and we write $\ih _k ^{\mc S} (X)$.

\begin{question} (Andrzej Weber) Are these intersection homology groups finitely generated? The present paper shows that these groups are finitely generated in several special cases: when $X$ is nonsingular (Theorem \ref{nashhomology}), for homology with compact supports when $X$ has a small resolution (Theorem \ref{smallres}), and when $X$ has isolated singularities (formulas \eqref{even} and \eqref{odd}).
\end{question}

Let $V$ be a semialgebraic open subset of the variety $X$, and let $\mc S$ be a good stratification of $X$ such that $V$ is a union of strata of $\mc S$ (Corollary \ref{semigood}). The intersection homology groups $\ih _k ^{c,\mc S} (V)$ (resp., $\ih _k ^{cl,\mc S} (V)$), $k=0 ,\dots,n$ are defined as above, with chains represented by compact semialgebraic subsets $C$ of $V$ (resp., closed semialgebraic subsets $C$ of $V$).

Let $Y$ be a closed semialgebraic subset of $X$, and let $\mc S$ be a good stratification of $X$ such that $Y$ is a union of strata of $\mc S$.  We define the relative intersection homology (with respect to $\mc S$) of $X$ mod $Y$  as follows. Let $C^{c,\mc S}_k(X)_{X\setminus Y}$ be the subcomplex of $C^c_k(X)$ consisting of semialgebraic chains that satisfy the perversity conditions \eqref{perv1} and \eqref{perv2} for all strata $S\in\mc S$ such that $S\subset X\setminus Y$. Let $C^{c,\mc S}_k(X,Y) = C^{c,\mc S}_k(X)_{X\setminus Y}/C^c_k(Y)$. We define $\ih _k ^{c,\mc S} (X,Y)$ to be the homology of the chain complex $C^{c,\mc S}_*(X,Y)$.

\begin{prop}\label{openIH}
Let $V$ be a semialgebraic open subset of the compact real algebraic variety $X$, and let $\mc S$ be a good stratification of $X$ such that $V$ is a union of strata of $\mc S$.  For $k\geq 0$ there is a canonical isomorphism 
$$
\ih _k ^{cl,\mc S} (V) \cong \ih _k ^{c,\mc S} (X,X\setminus V).
$$
\end{prop}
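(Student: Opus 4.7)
The plan is to adapt the proof of Proposition \ref{openH} by verifying that the chain-level map $C \mapsto \overline{C}$ (closure in $X$) identifies allowable chains in $V$ with the chains that compute the relative intersection homology of $(X, X\setminus V)$. Since $\mc S$ is a good stratification of $X$ with $V$ a union of strata, each stratum $S \in \mc S$ is either entirely contained in $V$ or entirely contained in the closed set $X \setminus V$. In the quotient complex $C^{c,\mc S}_{\ast}(X)_V / C^c_{\ast}(X\setminus V)$ the perversity conditions \eqref{perv1} and \eqref{perv2} are imposed only on strata $S \subset V$.

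First I would invoke Proposition \ref{openH} to get the underlying chain-complex isomorphism $C^{cl}_{\ast}(V) \cong C^c_{\ast}(X, X\setminus V)$ given by $C \mapsto \overline{C}$. Next I would establish the crucial local identifications on strata inside $V$. For any stratum $S \subset V$, openness of $V$ in $X$ combined with the equality $C = \overline{C} \cap V$ (valid since $C$ is closed in $V$) yields $\overline{C} \cap S = C \cap S$, and in particular $\dim(\overline{C} \cap S) = \dim(C \cap S)$, with $\dim \overline{C} = \dim C$. Because arc-symmetry in dimension $k$ at a point is a pointwise property determined in arbitrarily small neighborhoods, and $V$ is an open neighborhood of each $x \in S$, one has $\Sigma \overline{C} \cap S = \Sigma C \cap S$. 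Consequently $C$ satisfies the perversity conditions on strata $S \subset V$ if and only if $\overline{C}$ does, and the same reasoning applies verbatim to the respective boundary chains.

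The main obstacle is matching up the boundary operators on the two sides. Since $\partial$ is defined locally through the Euler characteristic of the link, the boundaries agree at points of $V$: $(\partial \overline{C}) \cap V = \partial C$ as chains in $V$, and the residual part $(\partial \overline{C}) \setminus V$ is contained in $X \setminus V$ and therefore represents an element of $C^c_{k-1}(X \setminus V)$, which is killed in the quotient. Thus $C \mapsto \overline{C}$ intertwines the two boundary operators, and by the local identifications above it restricts to an isomorphism between the subcomplex of allowable chains with closed supports in $V$ and the subcomplex $C^{c,\mc S}_{\ast}(X)_V$ modulo $C^c_{\ast}(X\setminus V)$. Passing to homology yields the desired canonical isomorphism $\ih^{cl,\mc S}_k(V) \cong \ih^{c,\mc S}_k(X, X \setminus V)$.
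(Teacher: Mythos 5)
Your proposal is correct and takes essentially the same route as the paper: the paper's proof simply asserts that $C\mapsto\overline C$ induces an isomorphism of chain complexes $C^{cl,\mc S}_*(V)\cong C^{c,\mc S}_*(X,X\setminus V)$, and you supply the verification (that perversity and pseudoboundary conditions restrict correctly to strata in $V$, and that the locality of $\partial$ makes the map a chain map into the quotient) that the paper leaves implicit.
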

\begin{proof}
If $C$ is an allowable  closed semialgebraic subset of $V$, let $\overline C$ be the closure of $C$ in $X$. The function $C\mapsto \overline C$ induces an isomorphism of chain complexes $C^{cl,\mc S}_*(V)\cong C^{c,\mc S}_*(X,X\setminus V)$.
\end{proof}

In general position or transversality arguments, we will use the following construction in several different settings. Let $X$ be a real algebraic variety, and let $U$ be a neighborhood of the origin in $\R^l$.
Let $\Phi(t,x) = (t, \Psi (t,x)) : U\times X \to U\times X$  be an arc-analytic and semialgebraic homeomorphism with arc-analytic and semialgebraic inverse, such that $\Psi(0,x) = x$ for all $x\in X$. Let $t_0\in U$ be such that the segment $I=\{t\in\R^l\ ;\ t = st_0, 0\leq s\leq 1\}$ is contained in $U$. Let $C$ be a $k$-cycle in $X$. The $t_0$-\emph{deformation homology} of $C$ with respect to the deformation $\Psi_t$ of the identity of $X$ is the chain 
\begin{equation}\label{defhom}
B = \Psi_* (I\times C),
\end{equation}
\begin{equation*} 
\partial B = C + \Psi_{t_0}(C).
\end{equation*}

\begin{thm}\label{refinement}
Let $X$ be a subvariety of $\proj^r$ and let $\mc S$ be a good stratification of $X$. The groups $\ih _k ^{\mc S} (X)$, $k=0 ,\dots,n$, do not depend on the stratification $\mc S$ or on the embedding $X\subset \proj^r$.
\end{thm}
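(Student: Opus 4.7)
The plan is to reduce to a refinement comparison via Corollary \ref{goodrefine}(b), and then to show that the resulting inclusion of chain complexes is a quasi-isomorphism by moving chains into general position using the submersive deformation of the ambient stratification.

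By Corollary \ref{goodrefine}(b), any two good stratifications of $X$ admit a common good refinement, so it is enough to prove $\ih_k^{\mc S}(X)\cong\ih_k^{\mc S''}(X)$ when $\mc S''$ is a good refinement of $\mc S$. For any $S''\in\mc S''$ contained in $S\in\mc S$ we have $\codim S''\ge\codim S$, and since $S=\bigsqcup_{S''\subset S} S''$, the perversity bounds \eqref{perv1}--\eqref{perv2} for $\mc S''$ at all $S''\subset S$ yield the corresponding $\mc S$-bounds at $S$. Thus every $\mc S''$-allowable chain is $\mc S$-allowable, giving an inclusion of subcomplexes $C^{\mc S''}_*(X)\hookrightarrow C^{\mc S}_*(X)$, and the task is to prove this is a quasi-isomorphism.

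To do so, we move $\mc S$-allowable chains into $\mc S''$-allowable ones using the submersive deformation $\Psi=\Psi^{\mc T}$ of $(\proj^r,\mc T)$ for the \emph{coarser} stratification $\mc T$ (where $\mc S=\mc T|X$). Each $\Psi_t$ preserves every $\mc T$-stratum and is an arc-analytic stratum-preserving homeomorphism of $(X,\mc S)$, hence automatically preserves $\mc S$-allowability. To obtain the $\mc S''$-perversity we apply Proposition \ref{genpos}(a) to the stratification $\mc T$ with $Z=C\cap S$ and $W=\cl(S'')$; since $\Psi_t(C)\cap S''=\Psi_t(C\cap S)\cap S''$, for generic $t$ we have
$$
\dim(\Psi_t(C)\cap S'')\le\dim(C\cap S)+\dim S''-\dim S,
$$
and combining with the $\mc S$-perversity $\dim(C\cap S)<\dim C-\half\codim S$ and $\dim S''-\dim S=\codim S-\codim S''$ gives
$$
\dim(\Psi_t(C)\cap S'')<\dim C+\half\codim S-\codim S''\le\dim C-\half\codim S'',
$$
which is \eqref{perv1} for $\mc S''$. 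The pseudoboundary bound \eqref{perv2} follows by the same argument applied to $\Sigma C$, since $\Psi_t$ commutes with $\Sigma$ (being an arc-analytic stratum-preserving homeomorphism).

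At the chain level, for surjectivity of $\ih_k^{\mc S''}\to\ih_k^{\mc S}$ the deformation chain $B=\Psi_*(I\times C)$ satisfies $\partial B=C+\Psi_{t_0}(C)$ and is $\mc S$-allowable by $\dim(B\cap S)\le 1+\dim(C\cap S)<\dim B-\half\codim S$ (with the analogous bound on $\Sigma B$). For injectivity, given an $\mc S''$-allowable cycle $C=\partial D$ with $D$ $\mc S$-allowable, the moving argument applied to $D$ produces $\Psi_{t_0}(D)$, which is $\mc S''$-allowable with $\partial\Psi_{t_0}(D)=\Psi_{t_0}(C)$, and the deformation chain $B'=\Psi_*(I\times C)$ is shown to be $\mc S''$-allowable by a parametric refinement of the general-position bound (exploiting that $C$ itself is already $\mc S''$-allowable and picking up one extra dimension from the $I$-parameter). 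Then $C=\partial(\Psi_{t_0}(D)+B')$ realizes $C$ as the boundary of an $\mc S''$-allowable chain. Independence of the embedding is handled in parallel: given $X\subset\proj^{r_1}$ and $X\subset\proj^{r_2}$, compose into the Segre embedding $X\hookrightarrow\proj^{r_1}\times\proj^{r_2}\hookrightarrow\proj^N$ and apply Theorem \ref{goodexist} to produce a common good refinement; compare each $\ih_k^{\mc S_i}$ to this refinement by the same argument, using the submersive deformation in the corresponding $\proj^{r_i}$.

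The main technical obstacle is the injectivity step, and more specifically the claim that $B'=\Psi_*(I\times C)$ is $\mc S''$-allowable: since $\Psi^{\mc T}$ does not preserve $\mc S''$-strata, the dimension bound on $B'\cap S''$ must combine the strict general-position inequality at generic $t\in I$ with the fallback $\mc S$-bound at the finitely many bad parameters, and one must verify that integer-strict perversity survives the addition of the one-dimensional $I$-parameter. The fact that $t=0$ is automatically ``good'' (since $\Psi_0=\id$ and $C$ is already $\mc S''$-allowable) and the fact that the bad parameter set in $I$ is lower-dimensional together provide the margin needed to close the argument; the same care must be taken for the pseudoboundary condition \eqref{perv2}.
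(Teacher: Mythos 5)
Your overall strategy is the paper's: reduce to a common refinement via Corollary~\ref{goodrefine}(b), note the one-way inclusion of allowable subcomplexes, and use the submersive deformation of the \emph{coarser} stratification together with Proposition~\ref{genpos}(a) to push $\mc S$-allowable chains into $\mc S''$-allowable position, closing the homology argument with the deformation chain $\Psi_*(I\times C)$. The perversity arithmetic in the surjectivity step is also the same. On embedding independence you route through the Segre embedding and a third good stratification; the paper instead takes a refinement $\mc S'$ of $\mc S$ that is good for the second embedding (Corollary~\ref{goodrefine}(a)) and applies the refinement comparison once in each projective space. Your detour is valid but unnecessary.

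There is, however, a genuine gap in your injectivity step, which you correctly identify as the delicate point but then resolve incorrectly. You propose to bound $\dim(B'\cap S'')$ by combining the strict general-position inequality at generic $s\in I$ with ``the fallback $\mc S$-bound at the finitely many bad parameters.'' That fallback does not suffice: over a bad parameter $s_0>0$ the fiber $\Psi_{s_0t_0}(C)\cap S''=\Psi_{s_0t_0}(C\cap S)\cap S''$ is controlled only by $\dim(C\cap S)$, which the $\mc S$-perversity bounds by roughly $k-\half\codim S$, and this can strictly exceed $k-\half\codim S''$ whenever $\codim S''>\codim S$ --- for instance when $S$ is a top stratum, so $\dim(C\cap S)=k$, while $S''\subset S$ has positive codimension. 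A zero-dimensional bad locus in $I$ then contributes a component of $B'\cap S''$ of dimension $\dim(C\cap S)$, violating \eqref{perv1} for $\mc S''$. The paper avoids this entirely by \emph{choosing} $t_0$ so that the whole half-open segment $\{st_0:0<s\le 1\}$ lies inside the dense open set $U'$ of Proposition~\ref{genpos}(a), so there are no bad parameters except $s=0$, which is harmless because $\Psi_0=\id$ and $C$ is already $\mc S''$-allowable; the pseudoboundary condition \eqref{perv2} for $B'$ is then handled by Proposition~\ref{defchain}. With that choice of $t_0$ substituted for your fallback argument, your proof goes through.
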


\begin{proof}
To show independence of the good stratification $\mc S$, it suffices by Corollary \ref{goodrefine} (b) to prove that if $\mc S'$ is a good refinement of $\mc S$, then $\ih _k ^{\mc S'} (X)\cong \ih _k ^{\mc S} (X)$. Since $\mc S'$ is a refinement of $\mc S$, if a $k$-chain is allowable for $\mc S'$ then it is allowable for $\mc S$. (If $S'\subset S$ then $\codim S' \geq \codim S$.) Thus there is a canonical homomorphism $\varphi_k: \ih _k ^{\mc S'} (X)\to \ih _k ^{\mc S} (X)$.

We use general position to prove that $\varphi_k$ is an isomorphism. Let $C$ be a $k$-cycle that is allowable for the stratification $\mc S$. By Proposition \ref{genpos} (a) there is an  arc-analytic $\mc S$-stratum-preserving semialgebraic homeomorphism $\Psi_t: \proj^r\to \proj^r$ with arc-analytic inverse such that for all pairs of strata $S\in \mc S$ and $S'\in \mc S'$ with $S'\subset S$, we have
\begin{align} \label{p1}
\dim(\Psi_t(C)\cap S') &\leq \dim(C\cap S) + \dim S' - \dim S,\\ 
\label{p2}
\dim(\Psi_t(\Sigma C)\cap S') &\leq \dim(\Sigma C\cap S) + \dim S' - \dim S. 
\end{align} 
By (\ref{perv1}) for $S$, (\ref{p1}) implies
\begin{align*}
\dim(\Psi_t(C)\cap S') &< \dim C - \half \codim S +\dim S'-\dim S,\\
&\le  \dim C - \half \codim S',
\end{align*}
so the cycle $\Psi_t(C)$ satisfies the perversity condition (\ref{perv1}) for $S'$. A similar argument applied to (\ref{p2}) shows that $\Psi_t(\Sigma C)$ satisfies the perversity condition (\ref{perv2}) for $S'$. Thus $\varphi_k$ is surjective.

Now we show that $\varphi$ is injective. Suppose the $k$-cycle $C$ is allowable for $\mc S'$, and $C$ is the boundary of a $(k+1)$-chain $D$ that is allowable for $\mc S$. We apply the preceding argument to the chain $D$. There is an arc-analytic $\mc S$-stratum-preserving semialgebraic homeomorphism $\Psi_{t_0}: \proj^r\to \proj^r$ with arc-analytic inverse such that $\Psi_{t_0}(D)$ is allowable for $\mc S'$.

Choose $t_0$ so that $\{t\ ;\ t = st_0,\ 0<s\leq 1\}$ is contained in the open set $U'$ given by Proposition \ref{genpos} (a). Let $B$ be the $t_0$-deformation homology \eqref{defhom} of $C$ with respect to $\Psi_t$.
Then $B$ is an $\mathcal S'$-allowable homology with boundary $C+\Psi_{t_0}(C)$. (Condition \eqref{perv2} for $B$ follows from Proposition \ref{defchain}.)
Thus $C$ is null-homologous, so we have shown that $\varphi_k$ is injective.

To complete the proof of Theorem \ref{refinement}, suppose that $X\subset \proj^s$ is another embedding in a projective space. Let $\mc S$ be a good stratification with respect to the embedding $X\subset \proj^r$. By Corollary \ref{goodrefine} (a) applied to $X\subset \proj^s$, there is a refinement $\mc S'$ of $\mc S$ such that $\mc S'$ is good for the embedding $X\subset \proj^s$. So there's a canonical homomorphism $\varphi_k: \ih _k ^{\mc S'} (X)\to \ih _k ^{\mc S} (X)$, and the preceding argument shows that $\varphi_k$ is an isomorphism.
\end{proof}

The preceding proof gives the following more general result.

\begin{cor}
If $V$ is an open semialgebraic subset of the subvariety $X$ of $\proj^n$, and $\mc S$ is a good stratification of $X$ such that $V$ is a union of strata, the groups $\ih^{c,\mc S}_k(V)$, $\ih^{cl,\mc S}_k(V)$, and $\ih^{c,\mc S}_k(X,X\setminus V)$ are independent of $\mc S$.
\end{cor}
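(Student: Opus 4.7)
The plan is to mirror the proof of Theorem \ref{refinement} in each of the three settings, exploiting the fact that the submersive deformation of Proposition \ref{genpos} preserves every stratum, and therefore preserves $V$ and $X\setminus V$ whenever these are unions of strata. Given two good stratifications $\mc S_1$ and $\mc S_2$ of $X$ in each of which $V$ is a union of strata, I would first invoke Corollary \ref{goodrefine}(b) to produce a common good refinement $\mc S'$; since refinement preserves the property of being a union of strata, $V$ is automatically a union of strata of $\mc S'$ as well. The problem thus reduces to comparing the groups for a good stratification $\mc S$ and a good refinement $\mc S'$, where $V$ is a union of strata of both.

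For an $\mc S$-allowable cycle $C$, I would then invoke Proposition \ref{genpos}(a) to obtain the submersive deformation $\Psi_t$ of $(\proj^r, \mc T)$. Stratum preservation gives $\Psi_t(V) = V$ and $\Psi_t(X\setminus V) = X\setminus V$, so $\Psi_t$ restricts to stratum-preserving self-homeomorphisms of $V$ and of $X\setminus V$. The same inequalities \eqref{p1} and \eqref{p2} as in the proof of Theorem \ref{refinement} then show that $\Psi_t(C)$ is allowable for $\mc S'$, supplying surjectivity of the comparison map $\varphi_k$ in each of the three cases. For injectivity, I would apply the deformation to a bounding allowable $(k+1)$-chain $D$ and use the $t_0$-deformation homology $B = \Psi_*(I\times C)$, whose allowability condition \eqref{perv2} is controlled by Proposition \ref{defchain} exactly as in Theorem \ref{refinement}. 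For the relative group $\ih^{c,\mc S}_k(X, X\setminus V)$, the deformation descends to the quotient complex since $\Psi_t$ preserves the subcomplex of chains supported in $X\setminus V$.

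The main point requiring care beyond what appears in Theorem \ref{refinement} is that in the closed-support case, $B$ should be a closed semialgebraic chain in $V$ rather than only in $\proj^r$. This follows because $B$ is the projection to $V$ of the image of $I\times C$ under the restriction of the trivialization $\Phi(t,x) = (t, \Psi(t,x))$ to $I\times V$, and the projection $I\times V\to V$ is a closed map since $I$ is compact. In the relative case the analogous point is that the support condition in $X\setminus V$ is preserved because $X\setminus V$ is closed and stable under $\Psi_t$. Once these observations are in place, the remainder of the argument imports from Theorem \ref{refinement} without essential change.
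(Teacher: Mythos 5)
Your proposal is correct and follows precisely the route the paper intends; the paper simply states that the preceding proof (of Theorem \ref{refinement}) gives this corollary, and you have filled in the details that make this work, namely that the deformation $\Psi_t$ preserves $V$ and $X\setminus V$ because these are unions of strata, and that the deformation-homology construction produces a chain with the required support (compact, closed-in-$V$, or supported in $X\setminus V$, respectively). Your extra remark about $B$ being closed in $V$ via compactness of $I$ is a sensible point and consistent with what is implicit in the paper's argument.
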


\begin{rem}
If $X$ is a subvariety of $\R^r$, the proof of Theorem \ref{refinement} shows that $\ih^{c,\mc S}_k(X)$ and $\ih^{cl,\mc S}_k(X)$ do not depend on the stratification $\mc S$ or the embedding $X\subset\R^r$. Moreover, if we have embeddings $X\subset\proj^r$ and $X\subset\R^{r'}$, the intersection homology groups $\ih^{\mc S}_k(X)$ defined with respect to these two embeddings are canonically isomorphic.
\end{rem}

The geometric definition of $\ih_k(X)$ implies that these groups have properties analogous to the classical intersection homology groups of Goresky and MacPherson. For example the intersection homology groups are related to ordinary homology and cohomology in the following manner.

\begin{prop}
Let $X$ be an $n$-dimensional subvariety of $\proj^r$. For all $k\geq 0$ there are canonical homomorphisms 
$$
H^{n-k}(X;\Z/2)\to \ih_k(X) \to H_k(X;\Z/2)
$$
such that the composition is the classical Poincar\' e duality homomorphism.
\end{prop}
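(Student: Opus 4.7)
The plan is to construct the two maps separately and then observe that their composition is manifestly the classical Poincar\'e duality homomorphism. The map $\ih_k(X)\to H_k(X;\Z/2)$ is induced by the tautological inclusion of the complex of $\mc S$-allowable semialgebraic chains into the full semialgebraic chain complex $C^c_*(X)$, which computes $H_*(X;\Z/2)$.

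For the first map, my approach is to implement chain-level cap product with the $\Z/2$-fundamental class $[X]\in C^c_n(X)$. Choose a good stratification $\mc S$ of $X$ (Theorem \ref{goodexist}) and, refining if necessary, a semialgebraic triangulation $\mathcal T$ of $X$ in which every stratum of $\mc S$ is a union of open simplices. Let $\mathcal T'$ be the first barycentric subdivision, and for each simplex $\sigma\in\mathcal T$ let $D(\sigma)\subset\mathcal T'$ denote its dual block, namely the union of the closed simplices $[\hat\sigma,\hat{\tau_1},\dots,\hat{\tau_r}]$ with $\sigma\subsetneq\tau_1\subsetneq\cdots\subsetneq\tau_r$. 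For an $(n-k)$-cocycle $c=\sum_i\sigma_i^*$ representing $\alpha$, set $P(c):=\sum_i D(\sigma_i)$. This is the classical PL chain-level cap product with $[X]$, and on (co)homology it induces $\alpha\mapsto \alpha\frown[X]$.

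The crux is to verify that $P(c)$ is $\mc S$-allowable, and similarly for $P(d)$ whenever $c=\delta d$. Fix a stratum $S$ of $\mc S$ of codimension $\kappa=n-s>0$. A simplex $[\hat\sigma,\hat{\tau_1},\dots,\hat{\tau_r}]$ of $D(\sigma)$ meets $S$ only via $\tau_r\cap S$; since $S$ is a union of open simplices of $\mathcal T$, this forces $\tau_r\subset\overline S$, and a direct dimension count gives
\[
\dim(D(\sigma)\cap S) \le s-\dim\sigma = k-\kappa < k-\half\kappa,
\]
verifying \eqref{perv1}. The pseudoboundary $\Sigma D(\sigma)$ is contained in the combinatorial frontier of $D(\sigma)$, a subcomplex of $\mathcal T'$ of dimension $k-1$, so the same count gives $\dim(\Sigma D(\sigma)\cap S)\le k-\kappa-1\le k-\half\kappa-1$, verifying \eqref{perv2}. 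These bounds survive passing to the support of a sum, and the same count shows that $P(d)$ is itself allowable; since the cellular cap product is a chain map, $P(\delta d)=\p P(d)$, so coboundaries go to boundaries of allowable chains and $P$ descends to a well-defined homomorphism $H^{n-k}(X;\Z/2)\to\ih_k(X)$. The composition with $\ih_k(X)\to H_k(X;\Z/2)$ is, by construction, $\alpha\mapsto\alpha\frown[X]$, i.e.\ the Poincar\'e duality homomorphism.

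The main technical obstacle I anticipate is the control $\Sigma D(\sigma)\subset\text{frontier}(D(\sigma))$: because $X$ is singular, $D(\sigma)$ need not be a topological manifold-with-boundary, and arc-symmetry in top dimension fails precisely where the top-dimensional part of $D(\sigma)$ has a frontier in $X$, which requires a careful combinatorial argument. If a direct combinatorial verification proves awkward, an alternative is to form $P(c)$ and then apply Proposition \ref{genpos}\,(b) to a generic member $\Psi_t$ of a submersive deformation: the resulting semialgebraic perturbation $\Psi_t(P(c))$ is transverse to the strata of $\mc S$ and to its own singular locus, so both perversity conditions follow automatically from the general-position dimension estimates.
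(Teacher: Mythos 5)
Your second map $\ih_k(X)\to H_k(X;\Z/2)$ (forget allowability) coincides with the paper's. For the first map your route is genuinely different: you attempt a chain-level cap product \emph{inside} $X$ via dual blocks in a compatible triangulation, whereas the paper works in the ambient $\proj^r$, identifying $H^{n-k}(X;\Z/2)$ with $H_{r-n+k}(\proj^r,\proj^r\setminus X;\Z/2)$ by Alexander--Lefschetz duality and representing a class by a relative cycle $C$ in $(\proj^r,\proj^r\setminus X)$ made transverse in $\proj^r$ to the stratification $\mc S$ of $X$; the allowable representative is then $C\cap X$. This difference is not merely cosmetic.

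The dual-block argument has a genuine gap at the step you flagged. The inclusion of $\Sigma D(\sigma)$ in the combinatorial frontier of $D(\sigma)$ is false in general. The pseudoboundary records failure of arc-symmetry, and a union of $k$-simplices of $\mathcal T'$ glued non-flatly along $(k-1)$-faces is typically \emph{not} arc-symmetric in dimension $k$ along those internal faces; thus $\Sigma D(\sigma)$ contains interior ``fold'' faces of the form $[\hat\sigma,\hat\tau_1,\dots,\hat\tau_{q}]$ and not only the frontier faces $[\hat\tau_1,\dots,\hat\tau_r]$. Your dimension count for such a face meeting a stratum $S$ gives only $\dim(\Sigma D(\sigma)\cap S)\le k-\codim S$; when $\codim S=1$ this is $k-1$, whereas \eqref{perv2} requires $\le k-2$. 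Concretely, let $X$ be the union of two planes in $\R^3$ meeting along a line $L$ (a codimension-$1$ stratum) and $\sigma$ a $1$-simplex contained in $L$: then $D(\sigma)$ is a pinwheel of four arcs meeting at $\hat\sigma$, the set $D(\sigma)$ is not arc-symmetric in dimension $1$ at $\hat\sigma$, and $\hat\sigma\in\Sigma D(\sigma)\cap L$, so \eqref{perv2} fails for $k=1$, $\codim S=1$.

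Your fallback via Proposition \ref{genpos} does not repair this. That proposition produces a \emph{stratum-preserving} homeomorphism $\Psi_t$ of $X$; since $\Psi_t(S)=S$ one has $\Psi_t(Z)\cap S=\Psi_t(Z\cap S)$, hence $\dim(\Psi_t(Z)\cap S)=\dim(Z\cap S)$ for every stratum $S$. Stratified general position inside the singular space $X$ can never lower the dimension of a chain's intersection with a stratum, which is precisely what is needed here. This is the essential reason the paper works in $\proj^r$: there $\Psi_t$ is a family of diffeomorphisms of the smooth ambient manifold and can push the relative cycle $C$ into transverse position with the strata of $X$, after which the bound $\dim(C\cap S)\le\dim C+\dim S-r$ (and the analogous bound for $\Sigma C$, a union of strata of the substratification $\mathcal A$ of $C$) yields the perversity conditions for $C\cap X$.
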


\begin{proof}
The homomorphism $\ih_k(X) \to H_k(X;\Z/2)$ is induced by the chain map that takes an allowable chain to the corresponding semialgebraic chain, forgetting the perversity conditions. Let $l=n-k$. By Alexander-Lefschetz duality, we identify the cohomology of $X$ with the relative homology of $\proj^r$ modulo the complement of $X$,
$$
H^l(X;\Z/2) \cong H_{r-l}(\proj^r, \proj^r\setminus X;\Z/2).
$$
The Poincar\' e duality map $D_k:H^l(X;\Z/2)\to H_k(X;Z/2)$ (defined as cap product with the fundamental class in $H_n(X;\Z/2)$) can be described geometrically using the above identification (\emph{cf.} \cite{goresky}). If $\gamma\in H^l(X;\Z/2)$, then $\gamma$ can be represented by a relative $(r-l)$-cycle $C$ in $(\proj^r, \proj^r\setminus X)$ that is transverse to $X$. In other words, there are stratifications $\mathcal A$ of $C$ and $\mathcal B$ of $X$ such that, for every pair of strata $A\in\mathcal A$ and $B\in\mathcal B$, we have that $A$ and $B$ are transverse in $\proj^r$. Then $D_k(\gamma)$ is represented by the intersection $C\cap X$. If $\Sigma C$ is a union of strata of $\mathcal A$, then $C\cap X$ is an allowable cycle of $X$. Thus the homomorphism $D_k$ factors through $\ih_k(X)$.
\end{proof}

We conclude this section with some remarks on perversity conditions. Let $\mathcal S$ be a good stratification of the real algebraic subvariety $X$ of $\proj^r$. For simplicity, in this informal discussion we identify a semialgebraic $k$-chain with its support, a closed $k$-dimensional semialgebraic subset of $X$. Following \cite{king}, we define a \emph{loose perversity} to be a sequence of integers $\overline p = (p_0,p_1,p_2,p_3\dots)$ with $p_0 = 0$. We say that a semialgebraic $k$-chain $C$ in $X$ is $\overline p$-\emph{allowable} (with respect to $\mathcal S$) if, for all $i\geq 0$ and all strata $S\in \mathcal S$ of codimension $i$,
\begin{align*}
\dim(C\cap S)&\leq k -i + p_i ,\\
\dim(\partial C\cap S)&\leq (k-1) -i + p_i .
\end{align*}
Given a pair of loose perversities $(\overline p,\overline q)$, we say that the $k$-chain $C$ is $(\overline p,\overline q)$-\emph{allowable} if $C$ is $\overline p$-allowable and the pseudoboundary $\Sigma C$ (a $(k-1)$-chain) is $\overline q$-allowable.

With this notation our definition of an allowable $k$-chain $C$ of $X$ is equivalent to the statement that $C$ is $(\overline p,\overline q)$-allowable, with $\overline p = (0,0,0,1,1,2,2,\dots)$ and $\overline q = (0,0,1,1,2,2,\dots)$. Note that $\overline p$ is the ``upper middle'' perversity $\overline n$ of Goresky-MacPherson (\cite{gm1} \S 5, \cite{gm1a} \S6). (Their perversity sequence starts with $p_2$.)

A useful definition of intersection homology groups $\ih^{\overline p,\overline q}_*(X)$ for real algebraic varieties $X$, defined by a pair of loose perversity conditions $(\overline p,\overline q)$ on semialgebraic chains, should have at least the following two properties: (1) If $X$ is nonsingular, then $\ih^{\overline p,\overline q}_*(X) \cong H_*(X;\Z/2)$ (compact or closed supports), (2) $\ih^{\overline p,\overline q}_*(X)$ is independent of the good stratification $\mathcal S$.

Using general position arguments, we can show that the groups $\ih^{\overline p, \overline q}_*(X)$ satisfy (1) and (2) if the following conditions hold for all $i\geq 0$ (\emph{cf.} \cite{king} Theorem 9, and the proofs of Theorem \ref{refinement} above and Theorem \ref{nashhomology} below):
\begin{align*}
&p_i\leq p_{i+1}\leq p_i + 1,\\
&q_i\leq q_{i+1}\leq q_i + 1,\\
&p_i\leq q_i\leq p_i + 1.
\end{align*}
It remains to be seen which of these groups have useful properties. 

A case of interest is those pairs $(\overline p, \overline q)$ with the above restrictions as well as the following:
 \begin{align*}
&p_1=p_2=0,\\
&q_i = p_i + 1 \ \ \text{for $i$ even, $i>0$}.
\end{align*}
(Our definition of intersection homology satisfies these conditions.) Thus $\overline p$ is a Goresky-MacPherson perversity (\cite{gm1} \S 1.3), and there is no condition on $\Sigma C\cap S$ when $S$ has even codimension. So if all strata have even codimension we have the classical intersection homology groups.
 

\section{Nash approximation and arc-symmetric cycles} 
\label{Nash} 

Let $X\subset \R^r$ and $Y\subset \R^s$ be nonsingular 
real algebraic subvarieties, with $X$ compact.  The map $h:X \to Y$ is \emph{Nash} if it is real analytic and the graph of $h$ is semialgebraic. The map $h$ is Nash if and only if it is semialgebraic and $C^{\infty}$ (\cite{bcr} ch. 8). If $h :X\to X$ is a Nash isomorphism then the image by $h$ of a $k$-dimensional semialgebraic  subset $C$ of $X$ that is arc-symmetric at $x\in C$ (respectively, arc-symmetric in dimension $k$ at $x\in C$) is again semialgebraic and arc-symmetric at $h(x)\in C$ (respectively, arc-symmetric in dimension $k$ at $h(x)\in C$).  

Every compact nonsingular real algebraic variety admits a \emph{submersive Nash deformation of the identity}. More precisely, we have the following theorem.

\begin{thm}\label{nashdef}
Let $X$ be a compact nonsingular real algebraic variety. There is a Nash map $\Theta: U\times X\to X$, where U is a semialgebraic open neighborhood of the origin in $\R^l$, with the following properties:
\begin{enumerate}
\item $\Theta_t:X\to X$ is a Nash isomorphism for all $t\in U$, where $\Theta_t(x) = \Theta(t,x)$.
\item $\Theta_0(x) = x$ for all $x\in X$.
\item For $\Theta^x:U\to X$, the differential $D\Theta^x$ at $t$ is surjective for all $x\in X$, where $\Theta^x(t)= \Theta(t,x)$.
\end{enumerate}
\end{thm}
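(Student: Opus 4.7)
The plan is to construct $\Theta$ by translating points of $X \subset \R^r$ and projecting back onto $X$ through a Nash tubular neighborhood. The crucial input is that a compact nonsingular real algebraic subvariety $X$ of $\R^r$ admits a Nash tubular neighborhood: an open semialgebraic neighborhood $T$ of $X$ in $\R^r$ together with a Nash retraction $\rho : T \to X$ that sends each $y \in T$ to the nearest point of $X$ (see, e.g., \cite{bcr}, Ch.~8). With this at hand I would set $l = r$ and define
$$
\Theta(t,x) = \rho(x+t),
$$
on $U \times X$, where $U$ is a semialgebraic open neighborhood of $0 \in \R^r$ chosen by compactness of $X$ so that $x + t \in T$ for every $(t,x) \in U \times X$. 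Since $\rho$ and vector addition are Nash, $\Theta$ is Nash.

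To verify the three conditions, first $\Theta(0,x) = \rho(x) = x$, giving (2). For (3), the derivative of $\Theta^x(t) = \rho(x+t)$ is $D\rho_{x+t}$; at $t = 0$ this is the orthogonal projection $\R^r \to T_x X$, which is surjective. By continuity of $D\rho$ and compactness of $X$, I may shrink $U$ so that $D\Theta^x$ remains surjective for every $(t,x) \in U \times X$. For (1), fix $t \in U$ and consider the Nash map $\Theta_t : X \to X$; its differential at $x$ applied to $v \in T_x X$ is $D\rho_{x+t}(v)$, which at $t = 0$ equals $v$ since $\rho$ restricts to the identity on $X$. By continuity and compactness, after a further shrinking of $U$ the linear map $D(\Theta_t)_x$ is an isomorphism for all $x \in X$ and $t \in U$, so $\Theta_t$ is a local Nash diffeomorphism everywhere. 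Since $\Theta_0 = \id_X$, a final shrinking of $U$ ensures that each $\Theta_t$ is uniformly $C^1$-close to the identity on the compact manifold $X$, hence bijective, and thus a Nash isomorphism.

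The step requiring the most care is the first one: invoking a genuinely Nash --- rather than merely smooth --- tubular neighborhood, so that $\Theta$ lands in the Nash category. This is classical for compact nonsingular algebraic subvarieties of $\R^r$, and once it is available the remaining verification is the familiar smooth argument, with compactness of $X$ used to make all three shrinkings of $U$ uniform in $x$.
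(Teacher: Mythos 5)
Your proof is correct, and it takes a genuinely different and more direct route than the paper.  The paper proceeds in three steps: first, it constructs a smooth submersive family $\Psi: T\times X \to X$ by flowing along linear combinations of finitely many $C^\infty$ vector fields that span each tangent space (the construction of Goresky--MacPherson, Stratified Morse Theory 1.3.7); second, it replaces $\Psi$ by a Nash approximation $\Psi'$, noting that the submersivity and diffeomorphism properties are open in the $C^\infty$ topology and hence persist; third, since Nash approximation destroys the identity at $t=0$, it corrects this by composing with the Nash shear $h(t,x) = (t - \tau(x), x)$, where $\tau$ is the Nash function implicitly defined by $\Psi'_{\tau(x)}(x) = x$.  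Your construction $\Theta(t,x) = \rho(x+t)$, based on translation followed by the Nash tubular neighborhood retraction, achieves all three properties at once and bypasses both the approximation and the correction steps --- in particular $\Theta(0,x) = \rho(x) = x$ holds on the nose.  It is worth noting that the Nash tubular neighborhood (\cite{bcr} 8.9.5) appears in both proofs, but in the paper it serves only as a technical device inside the Stone--Weierstrass approximation, whereas in your argument it is the primary engine.  The paper's route has the incidental advantage of producing a potentially smaller parameter space ($l$ vector fields rather than $l = r$), but that is immaterial for the statement.  One small imprecision: $D\rho_x : \R^r \to T_xX$ is the orthogonal projection only when $\rho$ is the nearest-point retraction; for a general Nash retraction it is a projection whose kernel is merely transverse to $T_xX$, but it is still surjective since $\rho|_X = \id_X$, and surjectivity is all you need.
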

\begin{proof}
First we observe that there is a $C^\infty$ map $\Psi: T\times X \to X$, where $T$ is an open neighborhood the origin in $\R^l$, with the corresponding properties: (1) $\Psi_t$ is a diffeomorphism for all $t\in T$, (2) $\Psi_0(x) = x$ for all $x\in X$, and (3) $D\Psi^x$ at $t$ is surjective for all $x\in X$. The construction is given in \cite{gm2} (1.3.7): Choose finitely many $C^\infty$ vector fields $V_1,\dots,V_l$ on $X$ such that at each point $x\in X$ the vectors $V_1(x),\dots,V_l(x)$ span the tangent space $T_xX$. Let $\R^l$ be the vector space of formal linear combinations of the $V_i$, let $\Psi_t$ be the time 1 flow of the corresponding vector field, and let $T$ be a small open ball centered at the origin. 

Now let $\Psi':U\times X\to X$ be a Nash approximation of $\Psi$ (\emph{cf.}\ \cite{bcr} 8.9.7, \cite{shiota} I.3.4). More precisely, let $U$ be an open ball centered at the origin in $\R^l$, with $\cl(U)\subset T$. Let $\overline\Psi': cl(U)\times X\to X$ be a Nash approximation of $\Psi | (\cl(U)\times X)$ and let $\Psi' = \overline \Psi' | (U\times X)$. The approximation $\overline\Psi'$ is constructed using a Nash tubular neighborhood of the target $X$ (\cite{bcr} 8.9.5) and the Stone-Weierstrass Theorem (\cite{bcr} 8.8.5).

Since properties (1) and (3) of $\Psi$ are open in $C^\infty(T\times X, X)$, we can assume that $\Psi'$ also has these properties. Consider the function $\tau:X\to U$ such that $\Psi'_{\tau(x)}(x) = x$ for all $x\in X$. The graph of $\tau$ is the inverse image of the diagonal by the map $f:X\times U\to X\times X$, $f(x, t) = (\Psi_t(x), x)$. Thus $\tau$ is a Nash function. Let $h:U\times X\to U\times X$ be the Nash isomorphism $h(t,x) = (t-\tau(x), x)$, and let $\Theta(t,x)=\Psi'(h(t,x))$. Then $\Theta(0,x) = x$ for all $x\in X$, and so $\Theta$ has the desired properties (1), (2), and (3).
\end{proof}

\begin{prop}\label{nashgp}
Let $X$ be a compact nonsingular purely $n$-dimensional real algebraic variety, and let $\Theta: U\times X\to X$ be a submersive Nash deformation of the identity. Let $Z$ and $W$ be semialgebraic subsets of $X$.
There is an open dense semialgebraic subset $U'$ of $U$ such that, for all $t \in U'$, 
\begin{equation*}
\dim(\Theta_t(Z)\cap W)\le \dim Z + \dim W - n
\end{equation*}
\end{prop}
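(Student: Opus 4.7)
The plan is to apply a parametric transversality argument, using the submersivity of $\Theta$ in the parameter $t$ to reduce the intersection-dimension estimate to the generic fiber dimension theorem for semialgebraic maps. This is the Nash-category analog of Proposition~\ref{genpos}(a).

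First I would stratify $Z$ and $W$ into finitely many Nash submanifolds $Z = \bigsqcup_i Z_i$ and $W = \bigsqcup_j W_j$ of $X$, which is standard for semialgebraic sets. Since there are only finitely many pairs $(i,j)$ and a finite intersection of open dense semialgebraic subsets of $U$ is still open and dense, it suffices to produce, for each fixed pair, an open dense $U'_{i,j} \subset U$ on which $\dim(\Theta_t(Z_i)\cap W_j) \le \dim Z_i + \dim W_j - n$.

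Next I would study the evaluation map $F : U \times Z_i \to X$ defined by $F(t,z) = \Theta_t(z)$. Property (3) of Theorem~\ref{nashdef} says that the partial differential $D\Theta^z$ at $t$ is surjective for every $(t,z)$, so $F$ is already a Nash submersion onto $X$ when we vary $t$ alone. Therefore $A_{i,j} := F^{-1}(W_j)$ is a Nash submanifold of $U \times Z_i$ of codimension $n - \dim W_j$, and
\[
\dim A_{i,j} = \dim U + \dim Z_i + \dim W_j - n.
\]
Since $\Theta_t$ is a Nash isomorphism of $X$, the projection $\pi : A_{i,j} \to U$ has fibers $\pi^{-1}(t) = \{z \in Z_i : \Theta_t(z) \in W_j\}$, which $\Theta_t$ carries bijectively onto $\Theta_t(Z_i) \cap W_j$. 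In particular $\dim(\Theta_t(Z_i) \cap W_j) = \dim \pi^{-1}(t)$.

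Finally I would invoke the generic fiber dimension theorem for semialgebraic maps (see \cite{bcr} Ch.~9, also the tool behind Proposition~1.3 of \cite{mpp}): the set
\[
B_{i,j} := \{t \in U : \dim \pi^{-1}(t) > \dim A_{i,j} - \dim U\}
\]
is semialgebraic of dimension strictly less than $\dim U$, with the convention $\dim \emptyset = -\infty$ (so points $t$ outside the image of $\pi$ are handled automatically, covering the case where $\pi$ is not dominant). Taking $U'$ to be the complement in $U$ of the semialgebraic closure of $\bigcup_{i,j} B_{i,j}$ produces an open dense semialgebraic subset on which the required bound holds pairwise, hence on which $\dim(\Theta_t(Z)\cap W) \le \dim Z + \dim W - n$. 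The main obstacle is the careful application of this semialgebraic fiber-dimension step, particularly the non-dominant case; once it is in hand, everything else is a routine submersion-plus-codimension count made available by condition (3) of Theorem~\ref{nashdef}.
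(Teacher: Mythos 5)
Your proof is correct and takes essentially the same approach as the paper's, which simply cites Proposition~\ref{genpos}(a) and the corresponding argument in \cite{mpp}: stratify $Z$ and $W$ into Nash pieces, use condition~(3) of Theorem~\ref{nashdef} to make the evaluation map $F(t,z)=\Theta_t(z)$ a submersion so that $F^{-1}(W_j)$ has the expected dimension, and then apply the semialgebraic generic-fiber-dimension bound to the projection to $U$. You have in effect written out in the Nash setting the argument the paper delegates to \cite{mpp}.
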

\begin{proof}
The proof is the same the corresponding stratified general position result, Proposition \ref{genpos} (a) above (see \cite{mpp}).
\end{proof}

\begin{thm}\label{nashhomology}
If $X$ is a nonsingular real algebraic variety of pure dimension $n$, then $\ih^c_k(X)\cong H_k(X;\Z/2)$   and $\ih^{cl}_k(X)\cong H^{cl}_k(X;\Z/2)$ for $k=0,\dots,n$.
\end{thm}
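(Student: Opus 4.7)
The plan is to prove that the forgetful homomorphism $\varphi_k \colon \ih_k^{c,\mc S}(X) \to H_k(X;\Z/2)$ (and its closed-supports analogue) is an isomorphism, for some good stratification $\mc S$ of $X$, which exists by Theorem \ref{goodexist}. By the stratification independence established in Theorem \ref{refinement}, this suffices. The central tool is the Nash submersive deformation $\Theta \colon U \times X \to X$ provided by Theorem \ref{nashdef}; crucially, unlike the ambient stratified deformation $\Psi$ used in Theorem \ref{refinement}, the map $\Theta$ does \emph{not} preserve the strata of $\mc S$, so it can be used to move an arbitrary cycle into $\mc S$-allowable position.

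The core computation is that $\Theta_{t_0}(C)$ is $\mc S$-allowable for generic $t_0$ and any $k$-chain $C$. Applying Proposition \ref{nashgp} to the pairs $(C,S)$ and $(\Sigma C,S)$ for each stratum $S\in\mc S$ yields, for generic $t$,
\begin{align*}
\dim(\Theta_t(C)\cap S) &\leq \dim C + \dim S - n = \dim C - \codim S,\\
\dim(\Theta_t(\Sigma C)\cap S) &\leq \dim\Sigma C + \dim S - n \leq \dim C - 1 - \codim S,
\end{align*}
using $\dim\Sigma C\leq \dim C - 1$. Since Nash isomorphisms preserve arc-symmetry, $\Sigma\Theta_t(C) = \Theta_t(\Sigma C)$. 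For any stratum $S$ with $\codim S>0$, the first estimate is strictly less than $\dim C - \half\codim S$, verifying \eqref{perv1}, and the second is at most $\dim C - \half\codim S - 1$, verifying \eqref{perv2}.

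For surjectivity, given an ordinary $k$-cycle $C$, the deformation chain $B = \Theta_*(I\times C)$ of \eqref{defhom} has $\partial B = C + \Theta_{t_0}(C)$, so $[C] = [\Theta_{t_0}(C)]$ in $H_k(X;\Z/2)$ and $\Theta_{t_0}(C)$ is allowable. For injectivity, given an allowable cycle $C = \partial D$ with $D$ an ordinary $(k+1)$-chain, set $D' = \Theta_{t_0}(D) + B_C$, where $B_C$ is the deformation chain of $C$; a direct computation gives $\partial D' = C$. The chain $\Theta_{t_0}(D)$ is allowable by the computation above applied to $D$, and the allowability of $B_C$ reduces to the same general position estimate applied to $I\times C$ for condition \eqref{perv1}, together with a pseudoboundary analysis showing $\Sigma B_C \subset C\cup\Theta_{t_0}(C)\cup\Theta(I\times\Sigma C)$, whose right-hand side meets each positive-codimension stratum $S$ in dimension at most $\dim B_C - \half\codim S - 1$ by the allowability of $C$ and the preceding estimates.

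The main obstacle is the non-compact case, since Theorem \ref{nashdef} is stated only for compact $X$. For the compact-supports group $\ih^c_k$ one can work inside a relatively compact Nash neighborhood of the support of each chain; for $\ih^{cl}_k$ one needs a Nash deformation agreeing with the identity outside a prescribed compact set, which can be constructed by applying Theorem \ref{nashdef} to a nonsingular projective compactification of $X$ and cutting off near the boundary.
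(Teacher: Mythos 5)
Your argument for the compact case follows the paper exactly: use Theorem \ref{nashdef} to get a submersive Nash deformation $\Theta$, apply Proposition \ref{nashgp} to verify the perversity conditions for $\Theta_{t}(C)$ at generic $t$, and use the deformation homology $B_C=\Theta_*(I\times C)$ together with Proposition \ref{defchain} for both surjectivity and injectivity. Your dimension estimates are correct, and your claim $\Sigma\Theta_t(C)=\Theta_t(\Sigma C)$ is justified by the arc-symmetry preservation noted at the start of Section 4. For the non-compact compact-supports case, your suggestion (work in a compactification, keep $t$ small so compactly supported chains stay inside $X$) is also essentially what the paper does.

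There is, however, a genuine gap in your treatment of $\ih^{cl}_k$ for non-compact $X$. You propose to construct a Nash deformation of $X$ agreeing with the identity outside a compact set by compactifying to $\overline X$ and ``cutting off near the boundary.'' But the whole point of Proposition \ref{nashgp} is that the deformation is \emph{submersive}, i.e.\ $D\Theta^x$ is surjective at every $x$. If $\Theta(t,x)=x$ for all $t$ in a neighborhood of $\overline X\setminus X$, then $D\Theta^x=0$ there, and the Sard-type density argument underlying Proposition \ref{nashgp} fails precisely where you need to move unbounded chains into allowable position. So the cut-off deformation cannot be used. The paper avoids this entirely: it invokes Propositions \ref{openIH} and \ref{openH} to identify $\ih^{cl}_k(X)\cong\ih^{c,\mc S}_k(\overline X,\overline X\setminus X)$ and $H^{cl}_k(X;\Z/2)\cong H_k(\overline X,\overline X\setminus X;\Z/2)$, and then runs the deformation on all of the compact manifold $\overline X$ (where it is globally submersive) at the level of relative chains. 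Since $\Theta_{t_0}$ need not preserve $\overline X\setminus X$, one then corrects the boundary by adding the deformation homology $B'$ of $\partial C$, so that $\Theta_{t_0}(C)+B'$ has the same boundary as $C$ in $\overline X\setminus X$, is allowable, and is relatively homologous to $C$. You should replace the cut-off idea with this relative-chain reduction.
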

 \begin{proof}
 (1) Suppose $X$ is compact. Let $X\subset \R^r$ be an embedding, and let $\mc S$ be a good stratification of $X$ with respect to this embedding. Since the complex of allowable chains of $X$ is a subcomplex of the complex of semialgebraic chains of $X$, for each $k\geq 0$ there is a canonical homomorphism $\varphi_k:\ih^{\mc S}_k(X)\to H_k(X;\Z/2)$. We claim that $\varphi_k$  is an isomorphism. 
 
Let $\Theta:U\times X\to X$ be a submersive Nash deformation of the identity as in Theorem \ref{nashdef}.
 Given $\gamma\in H_k(X;\Z/2)$, let $C$ be a semialgebraic cycle representing $\gamma$.   By Proposition \ref{nashgp},  there is an open dense subset $U'\subset U$ such that for $t\in U'$ we have that  $\Theta_t(C)$ is in general position with $S$ for all $S\in\mc S$. Then $\Theta_t(C)$ is homologous to $C$ and $\Theta_t(C)$ is allowable for $\mc S$. Therefore $\varphi_k$ is surjective.
 
 Now let $\xi\in\ih^{\mc S}_k(X)$, and suppose that $\varphi_k(\xi) = 0$. Let $C$ be an allowable $k$-cycle in $X$ representing $\xi$, and let $D$ be a semialgebraic $(k+1)$-chain in $X$ such that $C=\p D$. As above, there is a perturbation $\Theta_{t_0}$ of the identity of $X$ such that $\Theta_{t_0}$ is a Nash isomorphism and $\Theta_{t_0}(D)$ is in general position with $\{S\ ;\ S\in \mc S\}$. Then $\Theta_{t_0}(D)$ is allowable. Choose $t_0$ so that $\{t\ ;\ t = st_0,\ 0<s\leq 1\}$ is contained in the open set $U'$ given by Proposition \ref{nashgp}. Let $B$ be the $t_0$-deformation homology \eqref{defhom} of $C$ with respect to $\Theta_t$. Then $B$ is an allowable homology from $C$ to $\Theta_{t_0}(C) =\p \Theta_{t_0}(D)$. Therefore $\xi=0$, and so $\varphi_k$ is injective. 
 
 (2) Suppose $X$ is not compact. We embed $X$ as a Zariski open subset of a compact nonsingular variety $\overline X$. Let $\mc S$ be a good stratification of $\overline X$ such that $X$ is a union of strata.
 
First we consider homology with compact supports. Again we have a canonical homomorphism $\varphi_k:\ih^{c,\mc S}_k(X)\to H_k(X;\Z/2)$, and we claim that $\varphi_k$  is an isomorphism. We apply the preceding argument to a submersive Nash deformation of the identity $\Theta:U\times \overline X\to \overline X$. Given $\gamma\in H_k(X;\Z/2)$, let $C$ be a semialgebraic cycle representing $\gamma$.   By Proposition \ref{nashgp},  there is an open dense subset $U'\subset U$ such that for $t\in U'$ we have that  $\Theta_t(C)$ is in general position with $S$ for all $S\in\mc S$ and $\Theta_t(C)$ is contained in the open subset $X$ of $\overline X$. Then $\Theta_t(C)$ is homologous to $C$ and $\Theta_t(C)$ is allowable for $\mc S$. Therefore $\varphi_k$ is surjective.
The proof that $\varphi_k$ is injective also parallels the previous case. If $C$ is allowable and $C =\p D$, we just have to choose $t_0\in U'$ so that $\Theta_{t_0}(D)$ and the $t_0$-deformation homology of $C$ are contained in the open subset $X$ of $\overline X$.
 
Finally we consider homology with closed supports. We show that the canonical homomorphism $\varphi_k:\ih^{cl}_k(X)\to H^{cl}_k(X;\Z/2)$ is an isomorphism. By Proposition \ref{openIH} and Proposition \ref{openH}, this is equivalent to proving that the corresponding  homomorphism 
$$
\varphi'_k:\ih^{c,\mc S}_k(\overline X,\overline X\setminus X)\to H_k(\overline X, \overline X\setminus X;\Z/2)
$$ 
is an isomorphism. Again we use $\Theta:U\times \overline X\to \overline X$, a submersive Nash deformation of the identity. Given $\gamma\in H_k(\overline X, \overline X\setminus X;\Z/2)$, let $C$ be a semialgebraic chain representing $\gamma$. Thus $\p C \subset \overline X\setminus X$.  By Proposition \ref{nashgp},  there is an open dense subset $U'\subset U$ such that for $t\in U'$ we have that  $\Theta_t(C)$ and $\Theta_t(\p C)$ are in general position with $S$ for all $S\in\mc S$.  Choose $t_0\in U'$ so that the $t_0$-deformation homology $B$ of $C$ and the $t_0$-deformation homology $B'$ of $\p C$ are in general position with $S$ for all $S\in\mc S$. Then $\Theta_{t_0}(C) + B'$ is allowable, and $\p B = C + (\Theta_{t_0}(C) + B')$, and so $\p(\Theta_{t_0}(C) + B')= \p C$,  and $C$ is homologous to $\Theta_{t_0}(C) + B'$ in $C_k(\overline X,\overline X\setminus X;\Z/2)$. 
Therefore $\varphi'_k$ is surjective.

Now let $\xi\in \ih^{cl,\mc S}(\overline X,\overline X\setminus X)$, and suppose that $\varphi_k'(\xi) = 0$. Let $C$ be an allowable $k$-chain in $\overline X$ representing $\xi$, so $\p C \subset \overline X\setminus X$. Let $D$ be a $(k+1)$-chain with $\p D = C + C'$, where $C'\subset \overline X\setminus X$. There is a perturbation $\Theta_{t_0}$ of the identity of $\overline X$ such that $\Theta_{t_0}(D)$ and $\Theta_{t_0}(\p D)$ are in general position with $\{S\ ;\ S\in\mc S\}$. Choose $t_0$ so that the $t_0$-deformation homology $B$ of $\p D$ is in general position with $\{S\ ;\ S\in\mc S\}$. Then $\Theta_{t_0}(D)+ B$ is allowable, and $\p(\Theta_{t_0}(D)+B)= C + C'$, so $C$ is homologous to zero in $C^{c,\mc S}_k(\overline X,\overline X\setminus X)$. Thus $\varphi'_k$ is injective.
 \end{proof}

 The preceding proof applies \emph{mutatis mutandis} when $X$ is replaced with $V$, an open semialgebraic subset of an algebraic variety.

\begin{cor}\label{nashhomologycor}
Let $V$ be an open semialgebraic subset of the nonsingular real algebraic variety $X$ of pure dimension $n$. Then $\ih^c_k(V)\cong H_k(V;\Z/2)$   and $\ih^{cl}_k(V)\cong H^{cl}_k(V;\Z/2)$ for $k=0,\dots,n$.\qed
\end{cor}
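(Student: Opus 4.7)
The plan is to mimic the noncompact case of Theorem~\ref{nashhomology}, with $V$ playing the role of the noncompact variety and a compactification of $X$ playing the role of $\overline X$. If $X$ is not compact, embed $X$ as a Zariski open subset of a compact nonsingular real algebraic variety $\overline X$; otherwise set $\overline X = X$. Apply Corollary~\ref{semigood} to choose a good stratification $\mc S$ of $\overline X$ compatible with the semialgebraic sets $X$ and $V$, so that each is a union of strata of $\mc S$, and apply Theorem~\ref{nashdef} to fix a submersive Nash deformation of the identity $\Theta : U \times \overline X \to \overline X$. All general position statements below are invocations of Proposition~\ref{nashgp}.

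For homology with compact supports I would show that the canonical map $\varphi_k : \ih^{c,\mc S}_k(V) \to H_k(V;\Z/2)$ is an isomorphism. To realize a class $\gamma \in H_k(V;\Z/2)$, represent it by a compact semialgebraic $k$-cycle $C \subset V$. Since $C$ is compact, $V$ is open in $\overline X$, and $\Theta_0 = \id$, continuity yields a neighborhood $U_0$ of $0$ with $\Theta_t(C) \subset V$ for all $t \in U_0$. Proposition~\ref{nashgp} supplies an open dense $U' \subset U$ on which $\Theta_t(C)$ is in general position with every stratum of $\mc S$; picking $t_0$ so that the segment $\{st_0 : 0 \leq s \leq 1\}$ lies in $U_0 \cap U'$ makes $\Theta_{t_0}(C)$ allowable, and the $t_0$-deformation homology $B$ lies in $V$ and realizes $C \sim \Theta_{t_0}(C)$. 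For injectivity, if an allowable cycle $C$ bounds a compact chain $D \subset V$, deform $D$ by the same recipe to an allowable $\Theta_{t_0}(D) \subset V$ and splice with the deformation homology of $C$ to exhibit $C$ as the boundary of an allowable chain in $V$.

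For homology with closed supports, Propositions~\ref{openIH} and~\ref{openH} reduce the claim to the relative isomorphism $\varphi'_k : \ih^{c,\mc S}_k(\overline X, \overline X \setminus V) \to H_k(\overline X, \overline X \setminus V;\Z/2)$, which I would prove word-for-word as in the closed-supports step of Theorem~\ref{nashhomology}. Given a relative cycle $C$ with $\p C \subset \overline X \setminus V$, choose $t_0$ so that $\Theta_{t_0}(C)$, the $t_0$-deformation homology $B$ of $C$, and the $t_0$-deformation homology $B' = \Theta_*(I \times \p C)$ of $\p C$ are simultaneously in general position with all strata of $\mc S$; then $\p B = C + \Theta_{t_0}(C) + B'$, so $\Theta_{t_0}(C) + B'$ is an allowable chain whose boundary is $\p C$, giving the required relative homology from $C$ to an allowable representative. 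Injectivity follows by the same trick applied to a bounding $(k{+}1)$-chain. The main subtlety hidden by the phrase ``mutatis mutandis'' lies here: $\Theta_{t_0}(\p C)$ need not stay in $\overline X \setminus V$, so replacing $C$ by $\Theta_{t_0}(C)$ does not by itself yield a relative cycle; the correction chain $B'$ forced by the boundary formula repairs this, and its allowability is exactly what the extra general position condition on $\p C$ secures.
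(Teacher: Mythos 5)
Your proof is correct and matches the paper's approach: the paper proves the corollary simply by remarking that the argument of Theorem~\ref{nashhomology} ``applies \emph{mutatis mutandis},'' and your write-up spells out exactly those substitutions — compactify to $\overline X$, choose via Corollary~\ref{semigood} a good stratification of $\overline X$ compatible with both $X$ and $V$, use the Nash deformation and general position (Proposition~\ref{nashgp}) for compact supports, and reduce closed supports to the relative case via Propositions~\ref{openIH} and~\ref{openH}. Your remark about the correction chain $B'$ and the continuity argument ensuring $\Theta_t(C)\subset V$ for small $t$ are exactly the implicit details the ``mutatis mutandis'' elides.
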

 
 The following generalization of Thom representability for the homology of a nonsingular variety is due to Kucharz \cite{kucharz} (\emph{cf}.\ \cite{kp}, Theorem 6.1).
 \begin{prop}\label{arcsymrep}
If $X$ is a  nonsingular real algebraic variety, every  homology class  in $H_*(X;\Z/2)$ has a semialgebraic arc-symmetric representative.
\end{prop}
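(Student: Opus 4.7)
The plan is to combine Thom's mod-$2$ representability theorem with the Nash--Tognoli theorem, Nash approximation, and a general-position argument in the spirit of Proposition \ref{nashgp}. I would first reduce to the compact case: a class in $H_k(X;\Z/2)$ is supported on a compact semialgebraic cycle, and by compactifying $X$ to a nonsingular projective real algebraic variety (Hironaka) the problem descends to an ambient compact nonsingular variety.

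By Thom's theorem for $\Z/2$-coefficients, any $\gamma\in H_k(X;\Z/2)$ is the pushforward $f_*[M]$ of the fundamental class of a closed smooth $k$-manifold $M$ under a continuous map $f\colon M\to X$; after smoothing we may take $f$ to be $C^\infty$. By the Nash--Tognoli theorem, $M$ is diffeomorphic to a compact nonsingular real algebraic variety, so I may assume $M$ itself is such a variety, hence Nash and arc-symmetric. Nash approximation (\emph{cf.}\ \cite{bcr} 8.9, \cite{shiota}) then yields a Nash map $g\colon M\to X$ arbitrarily $C^0$-close to $f$; such a $g$ is homotopic to $f$, so $g_*[M]=\gamma$. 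A generic perturbation argument modeled on Proposition \ref{nashgp} (applied to the map $M\times M\to X\times X$ in order to control self-intersections, and to $g$ itself to control critical values) lets me arrange that $g$ has generically constant fiber cardinality over the top-dimensional stratum of $C:=g(M)$, so that the semialgebraic chain $g_*[M]$ is represented by the closure of the subset of $C$ where this cardinality is odd.

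The main obstacle is verifying that this representative is arc-symmetric. Here I would rely on Kurdyka's theorem \cite{kurdyka1}, \cite{kp}, that the image of a compact arc-symmetric set under a proper real-analytic semialgebraic map is arc-symmetric; applied to $g$, this gives arc-symmetry of $C=g(M)$ directly, since arcs into $C$ lift to arcs into the arc-symmetric variety $M$. To pass from $C$ to the support of $g_*[M]$, I would arrange in the perturbation that the critical set of $g$ and its multiple-point locus are arc-symmetric semialgebraic subsets of dimension strictly less than $k$; removing these from $C$ and taking closure produces a semialgebraic set that is arc-symmetric in dimension $k$ and represents $\gamma$, as required.
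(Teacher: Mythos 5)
You assemble the right ingredients (Thom, Nash--Tognoli, Nash approximation, Kurdyka) and correctly observe that the image $C=g(M)$ of the compact nonsingular algebraic variety $M$ is arc-symmetric. The gap is in the step from $C$ to a representative of the chain $g_*[M]$, and it is not repaired by generic perturbation.

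The support of $g_*[M]$ is the closure of the locus in $C$ over which the mod-$2$ fiber count of $g$ is odd; it is \emph{not} obtained by deleting the critical set and multiple-point locus of $g$ from $C$ and closing up. If $C$ has a $k$-dimensional piece over which the generic fiber has, say, two points, that piece is simply absent from $\supp g_*[M]$, and no perturbation of $g$ will alter the parity of a Nash map's fibers. So your claim to ``arrange that $g$ has generically constant fiber cardinality'' by a Proposition~\ref{nashgp}-type argument is not justified, and the resulting description of the representative is wrong in general. What is actually needed, and what the paper's proof uses, is Proposition~5.1 of \cite{mccpar1}: if $V'$ is \emph{connected} and $g|V'$ is generically finite over its image, then the parity of the fiber cardinality is generically constant on $g(V')$. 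This is a theorem about Nash/regular maps (ultimately a form of the arc-symmetry of $M$), not a transversality statement. To apply it, one must first decompose $M$ into connected components $V_1,\dots,V_r$ and treat $f_*[V_i]$ separately: discard components where the image drops dimension, discard those with generic fiber parity even (they contribute $0$), and for the remaining ones $f(V_i)$ is arc-symmetric in dimension $k$ by \cite{mccpar1} and \cite{kurdyka1} and represents $f_*[V_i]$. The sum is then the required arc-symmetric representative. So the route is the same in spirit, but the component decomposition and the constant-parity result are essential and cannot be replaced by a generic-position argument.
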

\begin{proof}
Let $\alpha\in H_k(X;\Z/2)$.
By \cite{thom} (Th\' eor\` eme III.2) there is a compact manifold $V$ of dimension $k$ and 
a smooth mapping $f: V\to X$ such that $\alpha = f_* ([V])$, where 
$[V]$ denotes the fundamental class of $V$.  By the Nash-Tognoli
theorem (\cite{bcr} Theorem 14.1.10, \cite{bk} Theorem 1.2) we may suppose that $V$ is a nonsingular affine algebraic variety and, by Nash approximation (\cite{bcr} Corollary 8.9.7), that $f$ is a Nash mapping. 

If $V_1,\dots,V_r$ are the connected components of $V$, then $\alpha = f_*([V]) = f_*([V_1]) + \cdots +f_*([V_r])$. Let $V'$ be a connected component of $V$.  We may assume that $f'=f|V'$ is generically finite over $f(V')$; otherwise
$f_*([V'])=0$.  By \cite{mccpar1} (Proposition 5.1) the fibers 
of $f'$ are of generically constant parity over $f(V')$, and we may assume that the generic fiber has an odd number of points; otherwise again $f_*([V'])=0$.  It follows from \cite{kurdyka1} and \cite {mccpar1} (\S 5) that $f(V')$ is arc-symmetric in dimension $k$, or equivalently in terms of  arc-symmetric closure that 
$\dim (\as {f(V')} \setminus f(V')<k$.  And 
$f_*([V'])$ is represented by $f(V')$ or by its arc-symmetric
closure.  
\end{proof}

We will need the following relative version of the preceding theorem.

\begin{prop}\label{relative}
Let $Y\subset\R^r$ be a nonsingular real algebraic variety of pure dimension $n$. Let $X\subset Y$ be a compact $n$-dimensional submanifold of $Y$ such that the boundary $\p X$ is a subvariety of $Y$. Let $U$ be a closed neighborhood of $\p X$ in $X$. Then any class in $H_*(X,\p X;\Z/2)$ has a semialgebraic representative that is arc-symmetric in $X\setminus U$.
\end{prop}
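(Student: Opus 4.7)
The plan is to adapt the proof of Proposition \ref{arcsymrep} to the relative setting. The key observation is that the arc-lifting argument used in the absolute case applies verbatim at every image point that lies off the image of the boundary: the only obstruction to extending lifted analytic arcs through a critical value is that they might hit the boundary of the source manifold, and this cannot happen above points bounded away from $f(\p V)$. Concretely, I would first invoke relative Thom representability for $\Z/2$ homology to obtain a compact smooth $k$-manifold $V$ with boundary and a smooth map $f:(V,\p V)\to(X,\p X)$ with $\alpha = f_*([V,\p V])$. Next I would algebraize, using a Nash-Tognoli theorem for compact manifolds with boundary --- obtained, for example, by realizing the topological double $DV = V\cup_{\p V} V$ as a compact nonsingular real algebraic variety equipped with a regular involution whose fixed locus is $\p V$ --- so that $V$ becomes a semialgebraic subset of a nonsingular affine real algebraic variety with $\p V$ algebraic. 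Then a two-step Nash approximation (\cite{bcr} 8.9.5, 8.9.7) first approximates $f|_{\p V}:\p V\to\p X$ by a Nash map and then extends to a Nash map $g:V\to Y$ sending $(V,\p V)$ to $(X,\p X)$; since $X$ is a full-dimensional submanifold-with-boundary of $Y$ and $g(\p V)\subset\p X$, a sufficiently small perturbation keeps $g(V)\subset X$.

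Next I would follow the endgame of Proposition \ref{arcsymrep}. Pass to the connected components $V_1,\dots,V_r$ of $V$, and discard those on which $g$ is not generically finite or has an even number of points in the generic fiber; these contribute zero to $g_*([V,\p V])$ by \cite{mccpar1} Proposition 5.1. For each remaining component, the arguments of \cite{kurdyka1} and \cite{mccpar1} (\S 5) then show that $g(V_i)$ is arc-symmetric in dimension $k$ at every point $x\in g(V_i)\setminus g(\p V_i)$: the preimages of such an $x$ lie in the interior of $V_i$, and the analytic continuations of arcs lifted through critical values stay in $V_i$ and descend via $g$ to the required continuation in $g(V_i)$, exactly as in the closed case. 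Hence $\Sigma g(V_i)\subset g(\p V_i)\subset \p X\subset U$.

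Taking $C$ to be a closed semialgebraic set representing $\sum_i g(V_i)$, and using that the pseudoboundary is subadditive under symmetric differences, one obtains $\Sigma C\subset U$ and $\p C\subset \p X$, so $C$ is a semialgebraic representative of $\alpha$ that is arc-symmetric in dimension $k$ throughout $X\setminus U$. The main obstacle in this outline is the algebraization step --- Nash-Tognoli for compact manifolds with boundary together with Nash approximation respecting the boundary constraint $\p V\to\p X$; these are standard extensions of the closed-manifold techniques of \cite{bcr} and \cite{bk} but require some setup. Once in place, the arc-symmetry analysis is simply a localization of the absolute argument already given in Proposition \ref{arcsymrep}.
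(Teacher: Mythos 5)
Your outline is correct in broad strokes, but it takes a genuinely different and technically heavier route than the paper. The paper sidesteps the relative setting entirely by passing to the quotient $X/\p X$, which is a compact semialgebraic space, and identifying $H_k(X,\p X;\Z/2)$ with $H_k(X/\p X;\Z/2)$. Thom representability is then applied in its \emph{absolute} form: one gets a \emph{closed} compact $k$-manifold $V$ (no boundary) and a continuous map $f\colon V\to X/\p X$. Algebraizing $V$ is the standard Nash--Tognoli theorem for closed manifolds, and one only needs $f$ to be semialgebraic globally and $C^\infty$ (hence Nash) over $X\setminus U$, i.e.\ away from the collapsed point; the arc-symmetry analysis from Proposition~\ref{arcsymrep} is then applied over $X\setminus U$ exactly as in the absolute case. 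Your route instead invokes relative Thom representability to get a manifold-with-boundary $V$ and a map of pairs $(V,\p V)\to(X,\p X)$, and then must algebraize $V$ \emph{with its boundary} (via a doubling-plus-involution construction) and carry out a Nash approximation that respects the boundary constraint $\p V\to\p X$ and keeps $g(V)\subset X$. These steps are plausible but, as you acknowledge, require nontrivial additional setup that the paper deliberately avoids. The trade-off is that your argument keeps the geometry of the pair $(X,\p X)$ visible throughout, whereas the paper trades the boundary for a single cone point and reuses the closed-manifold machinery verbatim; the latter is shorter and relies only on results already cited for Proposition~\ref{arcsymrep}.
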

\begin{proof}
The quotient $X/\p X$ is a compact semialgebraic set. By \cite{thom} any class in $H_k(X/\p X;\Z/2)$ can be represented by the image of the fundamental class of a compact smooth manifold $V$ of dimension $k$ by a continuous map $f:V\to X/\p X$. We may suppose that $V$ is real algebraic and that $f$ is semialgebraic and, moreover, $C^\infty$ over $X\setminus U$. The rest of the proof is similar to the preceding argument.
\end{proof}


\section{Small resolutions} 
\label{Small} 

A \emph{small resolution} of a variety $X$ is a resolution of singularities $\pi: \widetilde X\to X$  such that  for all $i\ge 1$, 
\begin{equation}\label{small1}
\dim \{ x\in X\ ;\ \dim \pi \inv (x) \ge i \} < n-2i. 
\end{equation}
If $\mc S$ is a stratification of $X$ such that the fibres of 
$\pi$ are of constant dimension over each stratum of $\mc S$,  
then \eqref{small1} is equivalent to 
\begin{equation}\label{small2}
\dim \pi \inv (x) < \half \codim S , 
\end{equation}
for every stratum $S$ such that $\dim S < \dim X$, and 
every $x\in S$.  

\begin{thm}\label{smallres} Let $\pi: \widetilde X\to X$ be a small resolution of the  real algebraic variety $X$. For all $k\geq 0$ there are canonical isomorphisms
$$
\pi_k: H_k(\widetilde X;\Z/2) \overset{\approx}{\longrightarrow} \ih^c_k(X).
$$
\end{thm}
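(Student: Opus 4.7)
The plan is to define $\pi_k$ as the pushforward of cycles along $\pi$ and to verify bijectivity via strict transforms, mirroring the Goresky--MacPherson strategy for small resolutions of complex varieties, adapted to the real setting using the Nash tools of Section~\ref{Nash}.

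First, I would choose a good stratification $\mathcal S$ of $X$ refining the singular locus $\operatorname{Sing}(X)$ and over whose strata the fibres of $\pi$ have locally constant dimension, so that smallness \eqref{small1} takes the form \eqref{small2}. Pulling back and refining (Corollary~\ref{semigood}) gives a good stratification $\widetilde{\mathcal S}$ of $\widetilde X$ compatible with $\pi^{-1}(\mathcal S)$, and by Corollary~\ref{nashhomologycor} we may identify $H_k(\widetilde X;\Z/2)$ with $\ih^{c,\widetilde{\mathcal S}}_k(\widetilde X)$. To define $\pi_k[\alpha]$, I would represent $\alpha \in H_k(\widetilde X;\Z/2)$ by an arc-symmetric cycle $\widetilde C$ via Proposition~\ref{arcsymrep}, and then apply a Nash isotopy (Proposition~\ref{nashgp}) to place $\widetilde C$ in general position with respect to $\pi^{-1}(\mathcal S)$. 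For any stratum $S\in\mathcal S$ of positive codimension, smallness gives $\dim \pi^{-1}(S) < n - \half \codim S$, so general position yields
\[
\dim\pi(\widetilde C)\cap S \;\leq\; \dim(\widetilde C\cap\pi^{-1}(S)) \;<\; k - \half\codim S,
\]
which is the perversity condition \eqref{perv1}. Condition \eqref{perv2} on even-codimension strata is implied by \eqref{perv1}; on odd-codimension strata it follows by combining arc-symmetry of $\widetilde C$ with arc-lifting through the proper birational Nash map $\pi$, which localises $\Sigma\pi(\widetilde C)$ to $\operatorname{Sing}(X)$ and bounds its contribution on each such stratum. Setting $\pi_k[\widetilde C]:=[\pi(\widetilde C)]$, well-definedness then follows from the same argument applied to a $(k+1)$-chain witnessing a homology in $\widetilde X$.

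For surjectivity and injectivity I would use strict transforms. Given an $\mathcal S$-allowable $k$-cycle $C$ in $X$, set $\widetilde C = \overline{\pi^{-1}(C\setminus\operatorname{Sing}(X))}$. For each stratum $S\subset\operatorname{Sing}(X)$, the strict allowability bound for $C$ combined with the strict smallness \eqref{small2} yields
\[
\dim\bigl(\widetilde C\cap \pi^{-1}(S)\bigr) \;\leq\; \dim(C\cap S) + \max_{x\in S}\dim\pi^{-1}(x) \;<\; k,
\]
so the intersection of $\widetilde C$ with the exceptional set $E=\pi^{-1}(\operatorname{Sing}(X))$ has dimension strictly less than $k$. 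After a further Nash perturbation in $\widetilde X$ if needed, $\widetilde C$ becomes a genuine $k$-cycle whose image under $\pi$ is homologous to $C$ in $\ih^{c,\mathcal S}_k(X)$, giving $\pi_k[\widetilde C] = [C]$. Injectivity is parallel: if $\pi(\widetilde C) = \p D$ for an allowable $(k+1)$-chain $D$, the strict transform $\widetilde D = \overline{\pi^{-1}(D\setminus\operatorname{Sing}(X))}$ satisfies $\p\widetilde D = \widetilde C$ after an analogous perturbation.

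The main obstacle is the sharp dimension bookkeeping on the exceptional set ensuring that the strict transforms $\widetilde C$ and $\widetilde D$ yield the correct boundaries in $\widetilde X$. A crude count on strata of odd codimension is borderline: both \eqref{perv2} for $\pi(\widetilde C)$ and the ``boundary in $E$'' bookkeeping for the strict transform require exploiting both strict inequalities simultaneously (allowability and smallness), combined with arc-symmetry of the representative and a Nash perturbation in $\widetilde X$ along the lines of Theorem~\ref{nashhomology}, in order to extract one extra unit of slack. Carrying this out carefully is the technical heart of the proof.
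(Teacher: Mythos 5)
Your strategy is essentially the paper's: push forward cycles via a general-position representative, invert via the strict transform, and check that smallness plus allowability gives the needed dimension bounds. Two points, however, deserve comment because they mark genuine differences from (or gaps relative to) the paper's argument.

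First, your use of Proposition~\ref{arcsymrep} to produce an arc-symmetric representative $\widetilde C$ is a pleasant variant of what the paper does: since a Nash isomorphism preserves arc-symmetry, $\Theta_t(\widetilde C)$ remains arc-symmetric, so $\Sigma\Theta_t(\widetilde C)=\emptyset$ and \eqref{perv2} for $\pi(\Theta_t(\widetilde C))$ holds vacuously via the arc-lifting identity $\Sigma\,\pi(\widetilde C)=\pi(\Sigma\widetilde C)$. The paper instead leaves $\widetilde C$ arbitrary and applies Nash general position (Proposition~\ref{nashgp}) to \emph{both} $\widetilde C$ and $\Sigma\widetilde C$ against $\pi^{-1}(\mathcal S)$; this is the content of Lemma~\ref{bi}. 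Your approach is not wrong, but it does not extend to the well-definedness step: a $(k+1)$-chain $\widetilde D$ witnessing a homology has $\Sigma\widetilde D\supset\partial\widetilde D\neq\emptyset$, so it cannot be made arc-symmetric, and ``the same argument'' does not apply. You would need to revert to general position for $\Sigma\widetilde D$ (or Proposition~\ref{relative}) there anyway, at which point the paper's uniform treatment of $\Sigma$ by general position is cleaner.

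Second, and more substantively, the bound $\dim(\widetilde C\cap E)<k$ that you derive is not the bound you actually need for the strict transform. Showing that $s(C)=\cl(\pi^{-1}(C)\setminus E)$ is a \emph{chain map}, i.e.\ $\partial\, s(C)\sim s(\partial C)$, requires the sharper estimate
$\dim\bigl(E\cap\pi^{-1}(\Sigma C)\bigr)<k-1$. The paper's Lemma~\ref{stricttrans} proves exactly this, combining $\partial s(C)\subset\Sigma(s(C))\subset s(C)\cap\pi^{-1}(\Sigma C)$ (which uses $\partial\subset\Sigma$ from the Appendix and the arc-lifting identity above) with the additive count $\dim\pi^{-1}(x)+\dim(S\cap\Sigma C)<\half\codim S+(k-\half\codim S-1)=k-1$ over each stratum $S$, using \eqref{perv2} and the strict inequality in \eqref{small2}. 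Your closing remark that one needs to ``exploit both strict inequalities simultaneously'' to extract ``one extra unit of slack'' is precisely this point, and you correctly identify it as the technical heart; but the proposal as written stops at $<k$, which only shows $s(C)$ has the right dimension, not that $s$ commutes with $\partial$. Without that step the inverse map is not constructed, and neither injectivity nor surjectivity of $\pi_k$ is established.
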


\begin{cor}\label{goresky}
Let $\pi^i : \widetilde X_i \to X$, $i=1,2$, be two small resolutions 
of $X$.  Then  $H_*(\widetilde X_1;\Z_2)$ and 
$H_*(\widetilde X_2;\Z_2)$  are canonically isomorphic.  
\qed \end{cor}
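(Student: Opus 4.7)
The plan is to derive the corollary as an immediate formal consequence of Theorem \ref{smallres}, using only that intersection homology is intrinsic to $X$ (which is already known from Theorem \ref{refinement}, and is encoded in the statement of Theorem \ref{smallres} by the target $\ih^c_k(X)$ not depending on the choice of resolution).

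First I would apply Theorem \ref{smallres} separately to each of the two small resolutions. This yields, for every $k\geq 0$, canonical isomorphisms
\begin{equation*}
\pi^1_k : H_k(\widetilde X_1;\Z/2) \overset{\approx}{\longrightarrow} \ih^c_k(X), \qquad
\pi^2_k : H_k(\widetilde X_2;\Z/2) \overset{\approx}{\longrightarrow} \ih^c_k(X).
\end{equation*}
Since the target group $\ih^c_k(X)$ is a single fixed object, independent of which small resolution was chosen, we may compose to form
\begin{equation*}
(\pi^2_k)^{-1} \circ \pi^1_k : H_k(\widetilde X_1;\Z/2) \overset{\approx}{\longrightarrow} H_k(\widetilde X_2;\Z/2),
\end{equation*}
and this composition is itself canonical because each factor is. This is the asserted isomorphism, so the proof is essentially a one-line invocation of Theorem \ref{smallres}.

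The only genuine subtlety to be careful about is the word \emph{canonical}: we must make sure that the construction of $\pi_k$ in Theorem \ref{smallres} does not implicitly depend on auxiliary choices (a stratification, an embedding, or a deformation used in the general position argument), or else the resulting $(\pi^2_k)^{-1} \circ \pi^1_k$ would depend on those choices too. However, Theorem \ref{refinement} already guarantees that $\ih^c_k(X)$ is independent of the good stratification and of the projective embedding, and the homomorphism in Theorem \ref{smallres} is constructed directly from the resolution map $\pi$ on chains (pushforward of semialgebraic cycles), with any perturbation used only to verify allowability. Hence no choices leak into $\pi_k$ at the level of homology, and the composite is genuinely canonical. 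This confirms the qed that accompanies the statement: no further argument beyond Theorem \ref{smallres} is needed.
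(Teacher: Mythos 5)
Your proof is correct and is precisely the argument the paper intends: the \qed attached to the corollary signals that it is an immediate formal consequence of Theorem \ref{smallres}, obtained by composing $(\pi^2_k)^{-1}\circ\pi^1_k$ through the common, intrinsically defined group $\ih^c_k(X)$. Your remark on canonicity is a reasonable sanity check but adds nothing beyond what Theorems \ref{refinement} and \ref{smallres} already supply.
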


This corollary answers positively a question posed in 1980 by Goresky and MacPherson \cite{gm3} (end of \S E, p.\ 228). Totaro announced a proof in 2002 via crepant resolutions \cite{totaro} (Cor.\ 3.4, p.\ 537). Van Hamel published a proof in 2003 using Galois equivariant cohomology \cite{vanhamel} (Cor.\ 4.10, p.\ 1410). 

\begin{rem} A consequence of Theorem \ref{smallres} is that $\ih^c_k(X)$ is not a topological invariant. It is easy to find homeomorphic plane algebraic curves with small resolutions that have different homology. An example is given by Goresky and MacPherson \cite{gm3} (p.\ 227). Another example is given by the family of curves $X_t =\{(x^2+y^2)^2 - x^2 +ty^2\}$.  For $t\geq 0$ all the curves $X_t$ are homeomorphic. For $t>0$, the curve $X_t$ is an irreducible curve with an ordinary node at the origin, and a small resolution is homeomorphic to a circle. But $X_0$ is reducible, with a tacnode at the origin, and a small resolution is homeomorphic to the disjoint union of two circles.
\end{rem}

\begin{proof}[Proof of Theorem \ref{smallres}]
Let $Y\subset X$ be an algebraic subvariety of $X$, $\dim Y<\dim X$, such that for $E= \pi \inv (Y)$, 
$\pi|(\widetilde X\setminus E): {\widetilde X\setminus E}\to X\setminus Y$ is an isomorphism.  Let $\mc A\to \mc B$ be a stratification of the map $\pi$, where $\mc A$, $\mc B$ are semialgebraic Whitney stratifications of $\widetilde X$, $X$, respectively, such that $Y$ a union of strata of $\mc B$, and $E$ is a union of strata of $\mc A$ (\emph{cf}.\ \cite{gm2}, \S 1.7). Then the fibers of $\pi$ are of constant dimension over each stratum of $\mc B$. By Proposition \ref{goodrefinesemialg} there is a good stratification $\mc S$ of $X$ (with respect to an embedding $X\subset\proj^r$) that refines $\mc B$.

The idea of the proof is the following. If a $k$-chain of $\widetilde X$ is in general position with the sets $\pi\inv(S)$ for all $S\in \mc S$, then its image in $X$ is allowable with respect to $\mc S$. The inverse chain map takes an allowable $k$-chain of $X$ to its strict transform in $\widetilde X$.

\begin{lem}\label{bi}
For each $S\in \mc S$, let $\widetilde S = \pi\inv(S)$. Let $\widetilde C$ be a $k$-dimensional semialgebraic 
subset of $\widetilde X$ such that $\widetilde C$ and $\Sigma\widetilde C$ are in general position with 
respect to all the sets $\widetilde S$.  Then $C=\pi (\widetilde C)$ satisfies the perversity conditions \eqref{perv1} and \eqref{perv2}.  
\end{lem}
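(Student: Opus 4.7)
My plan is to verify the two perversity conditions \eqref{perv1} and \eqref{perv2} for $C=\pi(\widetilde C)$ separately, combining the general position hypotheses on $\widetilde C$ and $\Sigma\widetilde C$ with the small resolution estimate \eqref{small2}, which I would rewrite in the decisive form
$$
\codim_{\widetilde X}\widetilde S>\half\codim_X S
$$
for every stratum $S$ with $\codim S>0$. I would first observe that general position of $\widetilde C$ with each $\widetilde S\subset E$ forces $\dim(\widetilde C\cap E)<k$, so $\pi|\widetilde C$ is generically one-to-one onto its image and hence $\dim C=k$.

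For \eqref{perv1}, since $C\cap S=\pi(\widetilde C\cap\widetilde S)$, general position of $\widetilde C$ with $\widetilde S$ gives at once
$$
\dim(C\cap S)\leq\dim(\widetilde C\cap\widetilde S)\leq k-\codim\widetilde S<k-\half\codim S.
$$

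For \eqref{perv2}, my key intermediate claim would be the inclusion
$$
\Sigma C\subset\pi(\Sigma\widetilde C).
$$
Granted this, the same chain of inequalities applied to $\Sigma\widetilde C$, which has dimension at most $k-1$ and is in general position with each $\widetilde S$ by hypothesis, yields
$$
\dim(\Sigma C\cap S)\leq\dim(\Sigma\widetilde C\cap\widetilde S)\leq(k-1)-\codim\widetilde S<(k-1)-\half\codim S,
$$
which is \eqref{perv2}. To establish the inclusion, off of $Y$ it is immediate because $\pi$ restricts to an arc-analytic semialgebraic isomorphism $\widetilde X\setminus E\to X\setminus Y$ with arc-analytic inverse, and arc-symmetry in dimension $k$ is preserved pointwise under such maps. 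Over $Y$ one needs a lifting argument using that $\pi$ is proper and birational: a real analytic arc in $C$ witnessing failure of arc-symmetry of $C$ at a point $x\in Y$ can be lifted, after a Puiseux-type reparametrization if necessary, to a real analytic arc in $\widetilde X$ arriving at some preimage $\widetilde x\in\pi^{-1}(x)\cap\widetilde C$, and this lift transfers the failure of arc-symmetry to $\widetilde C$ at $\widetilde x$, placing $\widetilde x\in\Sigma\widetilde C$.

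The delicate step is thus the arc-lifting argument over the exceptional locus; with the inclusion $\Sigma C\subset\pi(\Sigma\widetilde C)$ in hand, the rest is parallel dimension bookkeeping. Note that when $\codim S$ is even, \eqref{perv2} is already implied by \eqref{perv1}, so the strong form of the inclusion is really needed only for strata of odd codimension contained in $Y$, where the naive bound $\Sigma C\cap S\subset C\cap S$ coming from \eqref{perv1} falls short by exactly one dimension. The strict inequality in the small resolution estimate provides precisely the $\tfrac12$ gap that this one extra dimension requires.
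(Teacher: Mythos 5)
Your proof follows the same strategy as the paper's: rewrite the small-resolution estimate as $\codim\widetilde S > \half\codim S$, apply general position of $\widetilde C$ and $\Sigma\widetilde C$ with the $\widetilde S$, and transfer the perversity bounds via the inclusion $\Sigma C \subset \pi(\Sigma\widetilde C)$, which the paper states as the equality $\Sigma C = \pi(\Sigma\widetilde C)$ coming from the bijection that the birational map $\pi$ induces between analytic arcs in $\widetilde X$ not contained in $E$ and analytic arcs in $X$ not contained in $Y$. One small remark: lifting an analytic arc through a resolution of singularities requires no Puiseux-type reparametrization (the lift is analytic as is, by properness and the smoothness of the source interval), so that hedge can safely be dropped.
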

\begin{proof}
By construction of the stratification $\mc S$, for each $S\in\mc S$ we have that $\dim \pi\inv(x)$ is constant for $x\in S$. Thus for $x\in S$ we have $\dim \widetilde S = \dim S + \dim  \pi\inv(x)$, and $\dim\pi\inv(x) < \half \codim S$ by (\ref{small2}). It follows that 
\begin{equation*}
\codim \widetilde S > \half \codim S. 
\end{equation*} 
The general position hypothesis for $\widetilde C$ is
\begin{equation*}
\dim(\widetilde C\cap\widetilde S) \leq k - \codim\widetilde S. 
\end{equation*}
Therefore 
\begin{equation*}
\dim(C\cap S)\leq\dim(\widetilde C\cap\widetilde S)\leq k-\codim\widetilde S < k-\half\codim S,
\end{equation*}
so we have the perversity condition \eqref{perv1}.

Note that $\pi$ gives a one-to-one correspondence between the real analytic curves in $\widetilde X$ not contained entirely in the exceptional set $E\subset \widetilde X$ and the real analytic curves in $X$ not contained entirely in $Y$. 

 Therefore 
\begin{equation}\label{arcimage}
\Sigma C = \pi (\Sigma \widetilde C).
\end{equation}
Thus \eqref{perv2} for $\codim S$ odd follows by the above argument applied to $\Sigma \widetilde C$.  For $\codim S$ even \eqref{perv2} is redundant and follows from \eqref{perv1}.  This completes the proof of Lemma \ref{bi}.
\end{proof}

We use Lemma \ref{bi} to define a homomorphism  
\begin{equation*}
\pi_k: H_k(\widetilde X,\Z_2) \to \ih_k^{c,\mc S} (X). 
\end{equation*}
Embed $\widetilde X$ as a Zariski open subset of a compact nonsingular variety $\widetilde Y$.
If $\widetilde\gamma \in H_k(\widetilde X,\Z_2)$, by Proposition \ref{nashgp} applied to $\widetilde Y$ there is a  $k$-cycle $\widetilde C$ representing $\widetilde\gamma$ such that $\widetilde C$ and $\Sigma\widetilde C$ are in general position with respect to $\{\pi\inv(S)\ ;\ S\in\mc S \}$, so by Lemma \ref{bi} we can set 
$
\pi_k(\widetilde\gamma) = [\pi(\widetilde C)].
$
 
Again by Proposition \ref{nashgp} for $\widetilde Y$ and the deformation homology construction (as in the proof of Theorem \ref{nashhomology}), two homologous semialgebraic cycles that are in general position with all  $\pi\inv (S)$ are homologous by a semialgebraic chain in general position with all $\pi\inv (S)$. Therefore $\pi_k$ is well-defined.

To prove that $\pi_k$ is an isomorphism we construct its inverse.  For this we define the \emph{strict transform} $s (C) \subset \widetilde X$ of 
an allowable $k$-chain $C\subset X$ by  
$$
s(C) = \cl(\pi \inv (C) \setminus E),  
$$
which has dimension $k$.

\begin{lem}\label{stricttrans}
\hfill
\begin{enumerate}
\item [(i)]
$\Sigma(s(C))\subset s(C) \cap \pi \inv (\Sigma C)$;
\item [(ii)]
$\p s(C) \sim s(\p C)$.
\end{enumerate}
\end{lem}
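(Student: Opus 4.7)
\emph{Proof plan.} For part (i), the inclusion $\Sigma s(C) \subset s(C)$ is immediate, so the content is to prove that if $x \in s(C)$ and $\pi(x) \notin \Sigma C$ then $s(C)$ is arc-symmetric in dimension $k$ at $x$. When $x \in s(C) \setminus E$, the map $\pi$ is a real-analytic isomorphism near $x$ and the arc-symmetry transfers directly from $C$ at $\pi(x)$. The main case is $x \in E$: take witnesses $(U,V)$ for the arc-symmetry of $C$ in dimension $k$ at $\pi(x)$ and set
\[
\widetilde U = s(C) \cap \pi\inv(U) \cap W,\qquad
\widetilde V = (\pi\inv(V)\cap \widetilde U) \cup (E \cap \widetilde U),
\]
where $W\ni x$ is a small open set in $\widetilde X$. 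The key bound $\dim \widetilde V < k$ follows because $\dim V < k$ off $E$, and stratum-by-stratum the estimate $\dim(s(C)\cap \pi\inv(S)) < k$ is obtained by combining the perversity $\dim(C\cap S) < k - \half\codim S$ with the smallness bound $\dim \pi\inv(y) < \half\codim S$ from \eqref{small2} for $y \in S \subset Y$.

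To check the arc-symmetry property for $(\widetilde U,\widetilde V)$: any real-analytic arc $\widetilde\gamma\colon(-1,1)\to \widetilde X$ with $\widetilde\gamma((-1,0))\subset \widetilde U\setminus \widetilde V$ is disjoint from $E$ on $(-1,0)$ by construction, so $\widetilde\gamma$ is not identically contained in the algebraic set $E$, and hence $\pi\circ \widetilde\gamma$ is not identically in $Y$. Arc-symmetry of $C$ at $\pi(x)$ then gives $\pi\circ \widetilde\gamma((0,\varepsilon))\subset U\setminus V$ for some $\varepsilon > 0$; since the algebraic subvariety $Y$ is itself arc-symmetric, we may further shrink $\varepsilon$ so that $\pi\circ \widetilde\gamma((0,\varepsilon))\cap Y=\emptyset$. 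Lifting through the isomorphism $\pi\colon\widetilde X\setminus E \to X\setminus Y$ then yields $\widetilde\gamma((0,\varepsilon))\subset \widetilde U\setminus \widetilde V$.

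For part (ii), set $A=\p s(C)$ and $B=s(\p C)$, both $(k-1)$-chains. Since $\pi$ is an isomorphism on $\widetilde X\setminus E$ and the boundary operator is local (characterized by odd Euler characteristic of links), $A$ and $B$ coincide on $\widetilde X\setminus E$, so $\cl(A\div B)\subset E$ and it suffices to bound $\dim(A\cap E)$ and $\dim(B\cap E)$ strictly below $k-1$. For $B$, the allowability bound $\dim(\p C\cap S) < (k-1) - \half\codim S$ combined with the smallness estimate yields $\dim(B\cap \pi\inv(S))\leq k-2$ for every stratum $S\subset Y$. For $A$, part (i) together with \eqref{sigma} gives $A = \p s(C) \subset \Sigma s(C)\subset \pi\inv(\Sigma C)$, and then the sharper perversity \eqref{perv2}, $\dim(\Sigma C\cap S) \leq k - \half\codim S - 1$, combined with the fiber bound, again yields $\dim(A\cap \pi\inv(S))\leq k-2$. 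This forces $\dim\cl(A\div B) \leq k-2 < k-1$, so $\p s(C) \sim s(\p C)$.

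The principal difficulty lies in the case $x\in E$ of (i): the set $\widetilde V$ must absorb enough of the exceptional locus to control lifted arcs, yet remain of dimension $<k$. This balance is delivered precisely by the interplay between the smallness inequality \eqref{small2} and the perversity \eqref{perv1}, and we also rely crucially on the arc-symmetry of the algebraic set $Y$ to prevent the projected arc from re-entering $Y$ (and hence $\widetilde\gamma$ from re-entering $E$) at $t = 0^+$.
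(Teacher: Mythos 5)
Your proof is correct. For part (ii) the argument is essentially the paper's: both reduce to the key dimension count
$\dim \pi\inv(S\cap \Sigma C) = \dim\pi\inv(x) + \dim(S\cap\Sigma C) < \half\codim S + (k - \half\codim S - 1) = k-1$
for each stratum $S\subset Y$, obtained by adding \eqref{perv2} to the smallness estimate \eqref{small2}; the paper establishes the set inclusion $s(\p C)\subset\p s(C)$ and then shows the excess lies inside $E\cap\pi\inv(\Sigma C)$, of dimension $< k-1$, whereas you bound $\cl(\p s(C)\div s(\p C))\subset E$ by estimating $\p s(C)\cap E$ and $s(\p C)\cap E$ separately; these are two equivalent phrasings. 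For part (i) the paper is far terser: it simply applies \eqref{arcimage} ($\Sigma C = \pi(\Sigma\widetilde C)$), which rests on the one-to-one correspondence between analytic arcs in $\widetilde X$ not contained in $E$ and analytic arcs in $X$ not contained in $Y$. Your argument is a self-contained expansion of that arc-bijection principle, which has the virtue of making explicit where allowability of $C$ and the smallness bound \eqref{small2} enter (namely to guarantee $\dim(E\cap s(C))<k$, so that $E\cap \widetilde U$ can legitimately be absorbed into the exceptional set $\widetilde V$); the paper suppresses this point. One small imprecision: when you shrink $\varepsilon$ so that $\pi\circ\widetilde\gamma((0,\varepsilon))$ avoids $Y$, the justification is not that $Y$ is arc-symmetric — that property concerns arcs that remain in $Y$ at $t=0^-$, not arcs that approach $Y$ transversally from the complement. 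What you actually need is that $Y$ is algebraic and $\pi\circ\widetilde\gamma$ is an analytic arc not contained in $Y$, so $(\pi\circ\widetilde\gamma)\inv(Y)$ is discrete; alternatively, one may enlarge the set $V$ in the definition of arc-symmetry to contain $C\cap Y$ (permissible since $\dim(C\cap Y)<k$), after which the inclusion $\pi\circ\widetilde\gamma((0,\varepsilon))\subset U\setminus V$ already forces avoidance of $Y$.
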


\begin{proof}
By assumption $\dim (C\cap Y) <\dim C$ and hence $C$ equals 
$\pi (s(C))$ up to a set of dimension $<k$.  Thus 
(i) follows from \eqref{arcimage} with $\widetilde C$ replaced by 
$s(C)$.

To show (ii), note that  $\p s(C)$ and $s(\p C)$ 
coincide outside $E$, and hence $s(\p C) \subset 
\p s(C)$.   To show the opposite inclusion 
we note first that $\p s(C) \subset \Sigma (s(C)) 
\subset s(C) \cap \pi \inv (\Sigma (C))$ by \eqref{sigma} and (i).  Thus it suffices to show 
that  $E \cap \pi \inv (\Sigma (C))$ is  of dimension smaller than $k-1$.  
This follows from \eqref{perv2} and \eqref{small2}: 
For any stratum $S$ such that 
$\dim S< \dim X$, $x\in S$, 
\begin{align*}
\dim \pi \inv (S\cap \Sigma (C)) &= \dim \pi \inv (x) 
+ \dim (S\cap \Sigma (C))\\
&< \half \codim S + (k - \half \codim S - 1)\\ &= k-1 .
\end{align*}
\end{proof}

In particular if $C$ is a cycle so is $s(C)$.  
Lemma \ref{stricttrans} shows that $s$ induces a homomorphism 
\begin{equation*}
s_k:  \ih^c_k (X) \to H_k(\widetilde X,\Z_2) . 
\end{equation*}      
To show that $s_k$ and $\pi _k$ are inverse to each other, we note that if $\widetilde C$ is in general position with 
respect to all $\pi \inv (S)$, then $s(\pi(\widetilde C))\sim \widetilde C$ (actually $\supp 
s(\pi(\widetilde C))= \supp \widetilde C$).  So if $\widetilde\gamma$ is the homology class of $\widetilde C$, then $s_k(\pi_k(\widetilde\gamma)) = \widetilde\gamma$ .
On the other hand, let $\gamma\in\ih_k(X)$ be the class of the allowable cycle $C$.  Then $s(C)$ represents $s_k(\gamma)$.  By Proposition \ref{nashgp} for $\widetilde Y\supset \widetilde X$, there is a perturbation $\Theta_{t_0}$ of the identity of $X$ such that $\Theta_{t_0}(s(C))$ is in general position with all the sets $\pi\inv(S)$ for $S\in \mc S$. The cycle $\pi(\Theta_{t_0}(s(C)))$ represents $\pi_k(s_k(\gamma))$. Choose $t_0$ so that $\{t\ ;\ t=st_0,\ 0<s\leq t\}$ is contained in the open set $U'$ given by Proposition \ref{nashgp} for $\widetilde Y$, and let $B$ be the $t_0$-deformation homology \eqref{defhom} of $s(C)$ with respect to $\Theta_t$. Then $\partial B = s(C) + \Theta_{t_0}(s(C))$.  It follows that $\pi(s(C))$ is homologous to $\pi(\Theta_{t_0}(s(C)))$ by an allowable homology. Now $\pi(s(C))\sim C$, so $C$  represents  $\pi_k(s_k(\gamma))$.   

This completes the proof of Theorem \ref{smallres}.
\end{proof}


\section{Intersection pairing} 
\label{intersection}

The intersection number of intersection homology classes of complementary dimension is defined by transverse intersection of allowable cycles.  

\begin{thm}\label{duality}
Let $X$ be a real algebraic variety of dimension $n$. For all pairs of non-negative integers $k_1$, $k_2$, with $k_1+k_2 = n$, there is a bilinear pairing
$$
I:\ih^c_{k_1}(X)\times \ih^{cl}_{k_2}(X)\to \Z/2 .
$$
\end{thm}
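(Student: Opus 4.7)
The plan is to define $I(\alpha,\beta)$ by taking transverse representatives and counting intersection points mod $2$, mimicking the classical Goresky--MacPherson intersection pairing but working with semialgebraic chains and the submersive deformations supplied by good stratifications. First, fix an embedding $X\subset\proj^r$, a good stratification $\mc S$ of $X$ cut out from a Whitney stratification $\mc T$ of $\proj^r$, and a submersive deformation $\Psi\colon U\times\proj^r\to\proj^r$ of the identity of $(\proj^r,\mc T)$. Choose allowable cycles $C_1$ (compact support, dimension $k_1$) representing $\alpha$ and $C_2$ (closed support, dimension $k_2$) representing $\beta$. By Proposition \ref{genpos}(a), applied in parallel to the four pairs $(C_1,C_2)$, $(\Sigma C_1,C_2)$, $(C_1,\Sigma C_2)$, $(\Sigma C_1,\Sigma C_2)$, there is $t\in U$ for which $\Psi_t$ puts all of these sets in general position with respect to every stratum of $\mc S$.

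Combining the perversity condition \eqref{perv1} for $C_1$ and for $C_2$ with this general position inequality gives, for every stratum $S$ with $\codim S>0$,
$$
\dim\bigl(\Psi_t(C_1)\cap C_2\cap S\bigr)<k_1+k_2-\codim S-\dim S=0,
$$
so $\Psi_t(C_1)\cap C_2$ avoids every singular stratum. On the open part of $X$ the intersection has dimension at most $k_1+k_2-n=0$, and it is compact since $\Psi_t(C_1)$ is compact and $C_2$ is closed. Hence $\Psi_t(C_1)\cap C_2$ is a finite semialgebraic set, and I define $I(\alpha,\beta)$ to be its cardinality modulo $2$.

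The remaining work is to show that this number is independent of the choices. For independence of $t$, given two admissible parameters $t_0,t_1$ in the open dense subset $U'$ of Proposition \ref{genpos}, connect them by a segment in $U'$ and form the deformation homology $B=\Psi_*(I\times C_1)$ of \eqref{defhom}, a $(k_1{+}1)$-chain whose pseudoboundary satisfies \eqref{perv2} by Proposition \ref{defchain}. After a further small perturbation if necessary, $B\cap C_2$ is a compact $1$-chain concentrated in the regular stratum whose boundary equals $\Psi_{t_0}(C_1)\cap C_2 + \Psi_{t_1}(C_1)\cap C_2$, forcing the two mod-$2$ counts to agree. The same device shows independence of the representative within $\alpha$: if $C_1=C_1'+\partial D_1$ with $D_1$ a compact-support allowable $(k_1{+}1)$-chain, perturb $D_1$ and $\Sigma D_1$ into general position with $C_2$ and $\Sigma C_2$, and observe that the perversity bounds on $D_1$ force $\Psi_t(D_1)\cap C_2$ into the regular stratum as a compact $1$-chain whose boundary is $\Psi_t(C_1)\cap C_2 + \Psi_t(C_1')\cap C_2$. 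A symmetric argument, now perturbing $D_2$ instead of $C_1$ and using that $C_1\cap \Psi_t(D_2)$ inherits compact support from $C_1$, handles changes of $C_2$ in $\beta$. Bilinearity over $\Z/2$ is then immediate from additivity of intersection counts.

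The main technical obstacle will be carrying out all of these general position manipulations \emph{simultaneously}, so that $C_1,\Sigma C_1,D_1,\Sigma D_1$ and the pseudoboundaries of the various deformation homologies are in general position with $C_2,\Sigma C_2$ along every stratum at the same parameter $t$, and then checking that the intermediate $1$-chains really have the algebraic boundaries claimed, with no stray contributions coming from pseudoboundaries. This is exactly what the second perversity condition \eqref{perv2} on $\Sigma$ is designed to control; it will also matter that the class of good stratifications is stable under refinement (Corollary \ref{goodrefine}), so that if more precise stratified transversality is required one may invoke Proposition \ref{genpos}(b) after compatibly substratifying the four sets above.
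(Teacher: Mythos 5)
Your approach matches the paper's: transverse representatives via a good stratification and a submersive deformation of the identity, a mod-$2$ count of intersection points, and deformation homology for well-definedness. However, one of your intermediate claims is incorrect as stated, and it is exactly the place where the paper does the most work.

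You write that the perversity bounds ``force $\Psi_t(D_1)\cap C_2$ into the regular stratum.'' This is false. If $\codim S = 2l+1 > 0$, condition \eqref{perv1} for the $(k_1{+}1)$-chain $D'$ gives $\dim(D'\cap S)\leq k_1 - l$, and for $C_2$ gives $\dim(C_2\cap S)\leq k_2 - l - 1$; with stratified transversality this yields $\dim(D'\cap C_2\cap S)\leq 0$, \emph{not} $<0$. So the $1$-chain $D'\cap C_2$ can meet every odd-codimension stratum in finitely many points. Perversity alone does not push it off the singular locus, and if those points were boundary points of $D'\cap C_2$ your boundary formula $\p(D'\cap C_2)=C_1'\cap C_2$ would fail. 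The paper's resolution is to show that condition \eqref{perv2} applied to $\Sigma D'$ and $\Sigma C_2$ (combined with transversality of the pseudoboundaries) forces $\Sigma(D'\cap C_2)\cap S=\emptyset$ on every odd-codimension $S$. Since $\p\subset\Sigma$ (Proposition \ref{boundary}), the $1$-chain is arc-symmetric near those finitely many singular points, so its boundary lies entirely in the top strata, where ordinary transversality gives $\p(D'\cap C_2)=C_1'\cap C_2$. You flagged the worry about ``stray contributions from pseudoboundaries'' at the end, but that worry is the heart of the lemma, not a loose end; your draft needs to replace the incorrect ``forced into the regular stratum'' step with this arc-symmetry argument. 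Relatedly, to have a well-defined boundary for the $1$-chain (and a genuine transverse intersection, not merely a dimension bound) you should invoke Proposition \ref{genpos}(b) for substratified objects $(C_1,\mc A_1)$ with $\Sigma C_1$ a union of strata, rather than Proposition \ref{genpos}(a) applied separately to the four pairs; the paper does this from the outset.
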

\begin{proof}
Let $\gamma_1\in\ih^c_{k_i}(X)$ and $\gamma_2\in\ih^{cl}_{k_i}(X)$. We define the intersection number $I(\gamma_1,\gamma_2)\in \Z/2$ as follows. Embed $X$ as a subvariety of affine space, and let $\mc S$ be a good stratification of $X$ with respect to this embedding. Let $C_i$ be $\mc S$-allowable $k_i$-cycles representing $\gamma_i$, $i=1,2$, such that $C_1$ has compact support. Let $\mc A_i$ be a semialgebraic stratification of $C_i$ such that $\Sigma C_i$ is a union of strata of $\mc A_i$, and $\mc A_i$ is a substratified object of $\mc S$, $i=1,2$.
By Proposition \ref{genpos} there exists a stratified semialgebraic and arc-analytic isomorphism $\Psi_t:X\to X$ such that $\Psi_t(C_1,\mc A_1)$ is transverse to $(C_2,\mc A_2)$ in $(X,\mc S)$. Let $C_1' = \Psi_t(C_1)$. Transversality implies that for all strata $S\in\mc S$, we have
$$
\dim(C_1'\cap C_2\cap S) \leq
\dim(C_1'\cap S) + \dim(C_2\cap S) - \dim S .
$$
The perversity conditions \eqref{perv1} for $C_1$ and $C_2$ imply that $C_1'\cap C_2\cap S =\emptyset$ for all $S$ such that $\codim S > 0$, and  $\dim(C_1'\cap C_2\cap S)= 0$ if $\codim S = 0$. (The perversity conditions \eqref{perv2} imply that $\Sigma C_1'\cap C_2\cap S= \emptyset$ and $ C_1'\cap \Sigma C_2\cap S= \emptyset$ for all $S\in \mc S$.) Thus $C_1'$ and $C_2$ intersect in a finite number $m$ of points that lie in the top strata of $X$ (finite because $C_1'$ is compact), and each intersection point is the transverse intersection of top strata of $C_1'$ and $C_2$. Let $I(C_1',C_2) = m$. We set 
$$
I(\gamma_1,\gamma_2)= I(C_1',C_2) \pmod 2.
$$
To prove that $I(\gamma_1,\gamma_2)$ is independent of all the above choices ($\mc S$, $C_i$, $A_i$, $\Psi_t$), it suffices to prove the following lemma.
\begin{lem}
If there is a compactly supported allowable $(k_1+1)$-chain $D$ with $\p D = C_1' $, then $I(C_1', C_2) \equiv 0 \pmod 2$. 
\end{lem}
\begin{proof}
Let $\mc E$ be a semialgebraic stratification of $D$ compatible with $\Psi_t(\mc A_1)$ such that $\Sigma D$ is a union of strata of $\mc E$.
By Proposition \ref{genpos} (b) there exists a stratified semialgebraic arc-analytic isomorphism $\Psi'_{t_0}$ of $X$ such that $\Psi'_{t_0}(D,\mc E)$ is transverse to $(C_2,\mc A_2)$. 
Choose $t_0$ so that $\{t\ ;\ t = st_0,\ 0<s\leq 1\}$ is contained in the open set $U'$ given by Proposition \ref{genpos} (b). Let $B$ be the $t_0$-deformation homology \eqref{defhom} of $C'_1$ with respect to $\Psi_t$.
Then $B$ is an $\mathcal S$-allowable homology from $C'_1$ to $\Psi'_{t_0}(\partial D)=\partial \Psi'_{t_0}(D)$, with $B$ transverse to $C_2$. (Condition \eqref{perv2} for $B$ follows from Proposition \ref{defchain}.)

So we have an allowable $(k_1+1)$-chain $D'= B+\Psi'_{t_0}(D)$ such that $\p D' = C_1'$, and $D'$ is in transverse to $C_2$.  Therefore for all $S\in \mc S$ we have
\begin{equation}\label{homologygp}
\dim (D'\cap C_2\cap S) \le \dim (D' \cap S) + \dim (C_2 \cap S) - \dim S . 
\end{equation}
If $\codim S = 2l$ then \eqref{perv1} gives $\dim (D'\cap S)\leq k_1-l$ and $\dim(C_2\cap S)\leq k_2-l-1$, so \eqref{homologygp} gives $\dim (D'\cap C_2\cap S)\leq -1$, and $D'\cap C_2\cap S=\emptyset$. On the other hand, if $\codim S = 2l+1$ then again \eqref{perv1} gives $\dim (D'\cap S)\leq k_1-l$ and $\dim(C_2\cap S)\leq k_2-l-1$, so \eqref{homologygp} implies $\dim (D'\cap C_2\cap S)\leq 0$. 

For $\codim S$ even, $\Sigma(D'\cap C_2\cap S) \subset (D'\cap C_2\cap S)=\emptyset$. Suppose $\codim S = 2l+1$.
Now $\Sigma(D'\cap C_2)\subset (\Sigma D'\cap C_2)\cup(D'\cap\Sigma C_2)$, and \eqref{perv2} gives
\begin{align*}
\dim(\Sigma D'\cap C_2\cap S) &\le
\dim(\Sigma D'\cap S)+\dim(C_2\cap S) -\dim S\\
\dim(D'\cap \Sigma C_2\cap S) &\le
\dim(D'\cap S)+\dim(\Sigma C_2\cap S) -\dim S
\end{align*}
These equations and the condition \eqref{perv2} imply that $\Sigma D'\cap C_2\cap S=\emptyset$ and $D'\cap \Sigma C_2\cap S=\emptyset$. Therefore $\Sigma(D'\cap C_2\cap S)=\emptyset$.

We conclude that, although the compactly supported semialgebraic $1$-chain $D'\cap C_2$ may intersect the singular locus of $X$ in a finite number of points, it is arc-symmetric near these intersection points. Therefore $D'\cap C_2$ is an analytic curve near the codimension $1$ skeleton of $\mc S$, so the boundary of $D'\cap C_2$ lies in the union $X'$ of the top strata of $\mc S$. Since the stratified sets $D'\cap X'$ and $C_2\cap X'$ are transverse in $X'$, it follows that 
\begin{align*}
\p[(D'\cap X')\cap (C_2 \cap X') ] &= 
\p(D'\cap X')\cap (C_2\cap X') + (D'\cap X')\cap \p (C_2\cap X')\\
&= \p D' \cap C_2 \cap X'\\
&= C_1'\cap C_2.
\end{align*}
Therefore $\p(D'\cap C_2) = C_1'\cap C_2$, so $I(C_1'\cap C_2) =0$. This proves the lemma.
\end{proof}
 
Therefore $I(\gamma_1,\gamma_2)$ is well-defined.
 Next we show that the intersection pairing is bilinear. In other words, if $\gamma_1, \gamma_2\in \ih_{k}(X)$ and $\gamma_3\in\ih_{l}(X)$, with $k+l= n$, then
 \begin{equation*}
 I(\gamma_1+\gamma_2,\gamma_3)
 = I(\gamma_1,\gamma_3)+ I(\gamma_2,\gamma_3).
  \end{equation*}
  Let $C_i$ be an allowable $k$-cycle representing $\gamma_i$, $i=1,2,3$, such that $C_1$ and $C_2$ are compactly supported and transverse to $C_3$. We can also assume that $C_1$ and $C_2$ are transverse. It follows that (if $k<n$) $C_1 +C_2 = C_1 \cup C_2$, and 
 \begin{equation*}
(C_1\cup C_2)\cap C_3 =  (C_1\cap C_3) \sqcup (C_2\cap C_3). 
 \end{equation*}
 Therefore
 \begin{equation*}
 I(C_1 + C_2, C_3) = I(C_1,C_3)+ I(C_2,C_3).
 \end{equation*}
This completes the proof of the theorem.
\end{proof}


\section{Isolated Singularities} 
\label{Isolated} 
\begin{thm}
If $X$ is a purely $n$-dimensional real algebraic variety with isolated singularities, the intersection homology intersection pairing is nonsingular. In other words, for $k=0,\dots,n$ the duality homomorphism
\begin{align*}
D_k: \ih^c_k(X)&\to (\ih^{cl}_{n-k}(X))^*\\
\langle D_k(\alpha), \beta\rangle &= I(\alpha,\beta)
\end{align*}
is an isomorphism.
\end{thm}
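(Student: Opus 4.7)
The plan is to reduce the duality statement to classical Poincar\'e--Lefschetz duality for the smooth open manifold $X\setminus\Sigma$, where $\Sigma = \{x_1,\dots,x_N\}$ is the finite set of isolated singular points of $X$.

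The first ingredient is a \emph{complementarity principle} at the singular points. Each $x_i$ is a stratum of codimension $n$, so the perversity condition \eqref{perv1} imposes $\dim(C_1\cap\{x_i\}) < k - n/2$ on an allowable $k$-cycle $C_1$ and $\dim(C_2\cap\{x_i\}) < (n-k) - n/2$ on an allowable $(n-k)$-cycle $C_2$. The two thresholds $k-n/2$ and $n/2-k$ sum to zero, so at least one is non-positive, forcing at least one of $C_1, C_2$ to be disjoint from $\Sigma$ entirely. Consequently the intersection pairing of Theorem \ref{duality} can always be realized by transverse intersection inside the nonsingular manifold $X\setminus\Sigma$, where it literally equals the classical intersection pairing on these representatives.

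Next I would compute $\ih^c_k(X)$ and $\ih^{cl}_{n-k}(X)$ explicitly in terms of classical homology, by extending the perversity analysis above to bounding chains as well as cycles. Strictly below the critical dimension $n/2$, both an allowable cycle and its bounding chain are forced to avoid $\Sigma$, giving $\ih^c_k(X) \cong H_k(X\setminus\Sigma;\Z/2)$ and similarly for closed supports. Strictly above the critical dimension, the perversity conditions become vacuous at $\Sigma$, giving $\ih^c_k(X) \cong H_k(X;\Z/2)$ and $\ih^{cl}_k(X) \cong H^{cl}_k(X;\Z/2)$. At the critical dimension itself (and, when $n$ is odd, at an adjacent dimension where \eqref{perv2} becomes active), cycles are forced to avoid $\Sigma$ but bounding chains need not, so the relevant group is the image of the natural map $j_*: H_*(X\setminus\Sigma) \to H_*(X)$ induced by the open inclusion $j: X\setminus\Sigma \hookrightarrow X$. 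These are the explicit descriptions of $\ih$ for isolated singularities referenced in the introduction.

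To conclude, I would assemble (a) classical Poincar\'e--Lefschetz duality with $\Z/2$ coefficients on the open $n$-manifold $X\setminus\Sigma$, which gives a nonsingular pairing $H^c_k(X\setminus\Sigma;\Z/2) \times H^{cl}_{n-k}(X\setminus\Sigma;\Z/2) \to \Z/2$; (b) the long exact sequences of the pair $(X,\Sigma)$ in compact and closed support homology, the latter via Proposition \ref{openH}, which collapse into isomorphisms $H_*(X)\cong H_*(X\setminus\Sigma)$ away from the lowest couple of degrees because $\Sigma$ is $0$-dimensional; and (c) the complementarity principle of the first paragraph, which says that the intersection homology pairing sits inside a commutative square with the classical intersection pairing on $X\setminus\Sigma$. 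The main obstacle will be the diagram chase at the critical dimension, where both $\ih^c_{n/2}(X)$ and $\ih^{cl}_{n/2}(X)$ are merely images of $j_*$ rather than full classical homology groups: one must verify that the kernel of $j_*^c$ in degree $n/2$ is the exact annihilator of the image of $j_*^{cl}$ in complementary degree under Poincar\'e duality on $X\setminus\Sigma$. This should follow from the compatibility of the compact-support and closed-support long exact sequences with classical duality, but it is where the care is required.
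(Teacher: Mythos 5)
Your overall strategy --- compute the intersection homology groups globally in terms of classical homology and reduce to Poincar\'e--Lefschetz duality on a smooth manifold --- is legitimate, and the paper even acknowledges this alternate route (``An alternate proof of Theorem~\ref{duality} is to compute $\ih^c_*(X)$ and $\ih^{cl}_*(X)$ directly from the definitions, bypassing the local computations.''). Your complementarity principle at $\Sigma$ is also correct as far as it goes. But there is a genuine gap in the middle, and it is exactly where you flag that ``care is required.''

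The error is in the claimed characterization of the intersection homology groups near the critical dimension when $n$ is odd. You assert that in that range the relevant group is $\Image\bigl[j_*: H_*(X\setminus\Sigma)\to H_*(X)\bigr]$. That is correct only when $n=2m$, in which case \eqref{perv2} is subsumed by \eqref{perv1} and a $0$-dimensional stratum behaves as in classical Goresky--MacPherson theory. When $n=2m+1$, the pseudoboundary condition \eqref{perv2} becomes the operative constraint at $k=m$ (compact supports) and $k=m+1$ (closed supports), and it does \emph{not} force chains to avoid $\Sigma$; it forces them to be arc-symmetric in a neighborhood of $\Sigma$. Such chains pass through the singular points but lift to a resolution $\pi:\widetilde X\to X$. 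The paper's formula~\eqref{odd} shows that the correct answers are $\ih^c_{m+1}(X)\cong\Image\bigl[H_{m+1}(\widetilde X)\to H_{m+1}(X)\bigr]$ and $\ih^c_m(X)\cong\Image\bigl[H_m(\widetilde X\setminus\widetilde N)\to H_m(\widetilde X)\bigr]$, which are not the images of $j_*$. (A concrete check: for $n=1$, an irreducible nodal curve $X$ with normalization a circle has $\Image j_*=0$ in $H_1$, while $\ih^c_1(X)\cong H_1(\widetilde X)=\Z/2$.) Your plan of proving the remaining duality from ``compatibility of the compact-support and closed-support long exact sequences with classical duality'' on $X\setminus\Sigma$ therefore cannot go through in the odd case, because the groups in question live on the resolution, not on $X$ or $X\setminus\Sigma$. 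The paper's proof resolves exactly this by introducing the strict transform and identifying $\ih^{cl}_{m+1}$ and $\ih^c_m$ with $\Image(\widetilde\partial_{m+1})$ and $\Coker(\widetilde\partial_{m+1})$ inside the exact sequence of the pair $(\widetilde M,\partial\widetilde M)$ for a resolution $\widetilde M$ of the local cone $M$, and then invoking Lefschetz duality on the manifold with boundary $\widetilde M$. To repair your argument you would need to replace the exact sequence of $(X,\Sigma)$ by the exact sequence of $(\widetilde X,\partial)$ for an appropriate resolution, and prove the identifications with $\Image$ and $\Coker$ of the boundary map there; this is essentially rederiving the paper's key lemma.
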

\begin{proof}
First we prove a local version of the theorem at a singular point, and then we obtain the global version by a Mayer-Vietoris argument.

Suppose that $X$ is a subvariety of $\R^r$. Let $\mc S$ be a good stratification of $X$; the $0$-dimensional strata include the singular points of $X$. 

Let $x_0$ be a singular point of $X$. For $\epsilon>0$ let $B(x_0,\epsilon)$ be the closed ball in $\R^r$ of radius $\epsilon$ about $x_0$, and let $S(x_0, \epsilon)$ be the boundary of $B(x_0,\epsilon)$. Choose $\epsilon$ so small that $S(x_0, \delta)$ is transverse to $\mc S$ for all $\delta<\epsilon$, and $X\cap B(x_0,\epsilon)$ is semialgebraically homeomorphic to the cone on $X\cap S(x_0, \epsilon)$. The \emph{link} $L(x_0) = X\cap S(x_0, \epsilon)$ is a compact nonsingular real algebraic variety of dimension $n-1$. Let $M(x_0) = X\cap B(x_0,\epsilon)$, and let $N(x_0) = M(x_0)\setminus L(x_0)$. 

For the open semialgebraic subset $N(x_0)\subset X$, we  define $\ih^c_*(N(x_0))$ and $\ih^{cl}_*(N(x_0))$ with respect to the fixed stratification $\mc S$. 

\begin{prop}\label{localduality}
$D_k: \ih^c_k(N(x_0))\to (\ih^{cl}_{n-k}(N(x_0)))^*$ is an isomorphism for all $k$.
\end{prop}
\begin{proof}
The proof depends on the parity of the dimension of $X$. Let $N=N(x_0)$ and $L=L(x_0)$. To compute $\ih^c_k(N)$ and $\ih^{cl}_k(N)$, the crucial condition is whether a chain is allowed to meet the $0$-stratum $\{x_0\}$. We will repeatedly use general position (Proposition \ref{nashgp} for a nonsingular compactification $\overline X\supset X$, and the deformation homology construction \eqref{defhom}) and Corollary \ref{nashhomologycor} for the open semialgebraic subset $N'= N\setminus \{x_0\}$ of $X$. We write $H_*( - )$ (resp. $H^{cl}_*( - )$) for $H_*(\ -\ ;Z/2)$ (resp. $H^{cl}_*(\ -\ ;Z/2)$) .

\vskip.1in
(a) Even dimensional case (\emph{cf.} \cite{gm1a} \S2.4). Let $n = 2m$. Consider the perversity condition \eqref{perv1} for $S=\{x_0\}$. (When $S$ has even codimension \eqref{perv2} is a consequence of \eqref{perv1}.) For a $k$-chain $C$, \eqref{perv1} gives
\begin{equation}\label{perv1even}
\dim(C\cap\{x_0\})\leq k - (m+1)
\end{equation}
We will show that \eqref{perv1even} has the following consequences:
\begin{enumerate}
\item If $k\geq m+1$ then $\ih^c_k(N) = 0$ and $\ih^{cl}_k(N) \cong H_{k-1}(L)$.
\item If $k=m$ then $\ih^c_k(N) = 0$ and $\ih^{cl}_k(N) =0$.
\item If $k\le m-1$ then $\ih^c_k(N) \cong H_k(L)$ and $\ih^{cl}_k(N) =0$.
\end{enumerate}

\vskip.1in
First we consider compact supports ($n= 2m$). 

(1)(2) If $k\ge m$ then \eqref{perv1even} says $(k+1)$-homologies are allowed to hit the point $x_0$. If the allowable $k$-cycle $C$ bounds a semialgebraic $(k+1)$-chain $B$, then $C\cap N'$ bounds an allowable chain $B'$ in $N'$ by general position, so $C=\p B''$, where $B''$ is the closure of $B'$ in $N$. Thus the homomorphism $\ih^c_k(N)\to H_k(N)$ is injective. But $H_k(N) = 0$ since $N$ is a semialgebraic cone.

(3) If $k\le m-1$ then \eqref{perv1even} says that both $k$-cycles and $(k+1)$-homologies must miss the point $x_0$. Thus $\ih^c_k(N) = \ih^c_k(N')\cong H_k(N')$ by Corollary \ref{nashhomologycor}, and $H_k(N')\cong H_k(L)$ since $N'$ is semialgebraically homeomorphic to $L\times (0,1)$.

\vskip.1in
Next we consider closed supports ($n= 2m$).

(1) If $k\ge m+1$ then \eqref{perv1even} says $k$-cycles and $(k+1)$-homologies are allowed to hit the point $x_0$.
Thus the homomorphism $\ih^{cl}_k(N')\to \ih^{cl}_k(N)$ that takes a cycle in $N'$ to its closure in $N$ is an isomorphism. By Corollary \ref{nashhomologycor} we have $\ih^{cl}_k(N')\cong H^{cl}_k(N')$, and $H^{cl}_k(N')\cong H_{k-1}(L)$ since $N'$ is semialgebraically homeomorphic to $L\times (0,1)$.

(2)(3) If $k\le m$ then \eqref{perv1even} says $k$-cycles must miss the point $x_0$. So the homomorphism $F_k\to \ih^{cl}_k(N)$ is surjective, where $F_k\subset \ih^{cl}_k(N')$ is the subgroup generated by allowable cycles $C$ such that $x_0\notin \cl(C)$. By general position $F_k\cong G_k$, where $G_k\subset H^{cl}_k(N')$ is the subgroup generated by cycles $C$ such that $x_0\notin \cl(C)$. But $G_k=0$, since $N'\cong L\times (0,1)$, and if $x_0\notin \cl(C)$ then $C\subset L\times [\epsilon, 1)$ for some $\epsilon >0$, and $H_k^{cl}(L\times [\epsilon, 1)) =0$.

\vskip.1in
Since $L$ is a compact nonsingular variety of dimension $n-1$, we have by Poincar\' e Duality  $H_k(L)\cong H_{n-k-1}(L)$, so by (1)(2)(3) above we have $\ih^c_k(N)\cong \ih^{cl}_{n-k}(N)$ for all $k\ge 0$. (Both groups are zero if $k\ge m$.) It remains to show that the duality map $D_k$ is an isomorphism for $k\le m-1$. We claim that the following diagram commutes, where the top arrow is the duality map for the manifold $L$:
\begin{equation*}
\begin{CD}  
H_k (L)  
@>\simeq>> (H_{n-k-1} (L))^*  \\ 
@VV\simeq V @VV\simeq V \\ 
\ih^c_k(N)  
@>D_k>> (\ih^{cl}_{n-k}(N))^*\\
 \end{CD}
 \end{equation*}
The commutativity of this diagram follows the statement that, for all $\alpha\in H_k(L)$ and $\beta\in H_{n-k-1}(L)$, 
\begin{equation}\label{int1}
I(\alpha,\beta) = I(\varphi(\alpha),\psi(\beta)),
\end{equation}
where $\varphi:H_k(L)\to \ih^c_k(N)$ and 
$\psi: H_{n-k-1}(L)\to \ih^{cl}_{n-k}(N)$ are the isomorphisms constructed above ($k\le m-1$). Let 
\begin{align*}
&i:H_k(L)\to H^c_k(L\times (0,1)),\ \ i[C] = [C\times\{\half\}],\\
&j:H_{n-k-1}(L)\to H^{cl}_{n-k}(L\times (0,1)),\ \
j[C] = [C\times (0,1)].
\end{align*}
Then
\begin{equation}\label{int2}
I(\alpha,\beta) = I(i(\alpha),j(\beta)),
\end{equation}
and \eqref{int2} implies \eqref{int1}.

\vskip.1in
(b) Odd dimensional case. Let $n = 2m+1$. Consider the perversity conditions \eqref{perv1} and \eqref{perv2} for $S=\{x_0\}$. For a $k$-chain $C$, these conditions give
\begin{align}
\dim(C\cap\{x_0\})\leq k - (m+1) \label{perv1odd}\\
\dim(\Sigma C\cap \{x_0\})\leq k - (m+2) \label{perv2odd}
\end{align}
We will show that \eqref{perv1odd} and \eqref{perv2odd} have the following consequences:
\begin{enumerate}
\item If $k\geq m+2$ then $\ih^c_k(N) = 0$ and $\ih^{cl}_k(N) \cong H_{k-1}(L)$.
\item $\ih^c_{m+1}(N) = 0$, and $\ih^{cl}_{m+1}(N)$ is isomorphic to a subgroup of $H_m(L)$.
\item $\ih^c_m(N)$ is isomorphic to a quotient group of $H_m(L)$, and $\ih^{cl}_m(N) = 0$.
\item If $k\le m-1$ then $\ih^c_k(N) \cong H_k(L)$ and $\ih^{cl}_k(N) =0$.
\end{enumerate}

\vskip.1in
\noindent
In the following arguments, $C$ denotes an allowable $k$-cycle and $B$ an allowable $(k+1)$-chain.

\vskip.1in
First we consider compact supports ($n= 2m+1$).

(1)(2) If $k\geq m+2$ then $B$ and $\Sigma B$ are allowed to hit $x_0$, so we can repeat the above argument for $n$ even to conclude $\ih^c_k(N) = 0$.
 
(3) If $k = m$ then $C$ misses $x_0$ and $B$ may hit $x_0$, but $\Sigma B$ misses $x_0$. So the inclusion homomorphism $\ih^c_m(N')\to \ih^c_m(N)$ is surjective, and the kernel consists of classes represented by cycles $C$ in $N'$ that bound chains $B$ in $N$ such that $B$ is arc-symmetric in dimension $m$ at $x_0$. By Corollary \ref{nashhomologycor} we have $\ih^c_m(N')\cong H_m(L)$. Thus $\ih^c_m(N)$ is isomorphic to a quotient group of $H_m(L)$.

(4) If $k \leq m-1$ then $C$ and $B$ miss $x_0$, so we repeat the above argument for $n$ even to conclude that $\ih^c_k(N)\cong H_k(L)$.

\vskip.1in
Next we consider closed supports ($n= 2m+1$).

(1) If $k\geq m+2$ then $C$, $\Sigma C$, $B$, and $\Sigma B$ are allowed to hit $x_0$, so we can repeat the above argument for $n$ even to conclude that $\ih^{cl}_k(N)\cong H_{k-1}(L)$.

(2) If $k = m+1$ then $C$ can hit $x_0$ but $\Sigma C$ must miss $x_0$, and $B$, $\Sigma B$ are allowed to hit $x_0$. So the homomorphism $\ih^{cl}_{m+1}(N)\to \ih^{cl}_{m+1}(N')$ is injective, and the image consists of classes represented by cycles $C$ in $N'$ such that the closure of $C$ in $N$ is arc-symmetric in dimension $m+1$ at $x_0$. By Corollary \ref{nashhomologycor} we have $\ih^{cl}_{m+1}(N')\cong H_m(L)$. Thus $\ih^{cl}_{m+1}(N)$ is isomorphic to a subgroup of $H_m(L)$.

(3)(4) If $k \leq m$ then $C$ misses $x_0$, and we repeat the above argument for $n$ even to conclude that $\ih^{cl}_k(N)=0$.

\vskip.1in
Now $L$ is a compact nonsingular variety of dimension $n-1=2m$. We have by Poincar\' e Duality $H_k(L) \cong H_{n-k-1}(L)$ for all $k\geq 0$, so by (1)(2)(3)(4) we have $\ih^c_k(N)\cong \ih^{cl}_{n-k-1}(N)$ for $k\neq m$. (If $k\leq m-1$ then $\ih^c_k(N)\cong H_k(L)\cong H_{n-k-1}(L)\cong \ih^{cl}_{n-k}(N)$, and if $k\geq m+1$ then $\ih^c_k(N)=0= \ih^{cl}_{n-k}(N)$.) We see that the duality map $D_k$ is an isomorphism for $k\leq m-1$ by the same argument as above for the even dimensional case. 

It remains to show that 
$D_m: \ih^c_m(N)\to (\ih^{cl}_{m+1}(N))^*$
is an isomorphism. Recall  that $M=X\cap B(x_0,\epsilon)$, with $L=\p M$ and $N= M\setminus \p M$. We have that the group $ \ih^c_m(N)$ is generated by cycles in $N'$ with compact support. They are divided by boundaries of chains arc-symmetric in a neighborhood of $x_0$. We have an epimorphism
$$
H_m(\p M)=H_m(L)\cong H_m(N')\to \ih^c_m(N).
$$
The group $\ih^{cl}_{m+1}(N)$ is generated by cycles with closed support that are arc-symmetric in a neighborhood of $x_0$. They are divided by arbitrary homologies with closed support. We have a monomorphism
$$
\ih^{cl}_{m+1}(N)\to H^{cl}_{m+1}(N')\cong H_{m+1}(M,\p M).
$$
Let $\pi: \widetilde M\to M$ be a resolution of the singular point $x_0$, with $E=\pi\inv(x_0)$ the exceptional divisor. Thus $\pi|(\widetilde M\setminus E): \widetilde M\setminus E\to M\setminus \{x_0\}$ is an isomorphism, and $\widetilde  M$ is a manifold with boundary $\p \widetilde M \cong \p M=L$. 
\begin{lem}
Consider the diagram
\begin{equation*}\begin{CD} 
H_{m+1}(\widetilde M) @>>> 
H_{m+1} (\widetilde M,\p \widetilde M)  
@>\tilde \p_{m+1}>> H_{m} (\p \widetilde M) 
@>\widetilde i_m>> H_m(\widetilde M) \\ 
@. @VV\pi_{m+1}V @V\simeq VV \\ 
0 @>>>  H_{m+1} ( M,\p M)  
@>\p_{m+1}>\simeq> H_{m} (\p  M) @>>> 0\\
 \end{CD}\end{equation*}
 \begin{align*}
 &\ih^{cl}_{m+1}(N)\cong \Image(\pi_{m+1})\cong\Image (\tilde \p_{m+1})\\
 &\ih^c_m(N)\cong H_m(\p \widetilde M)/\Image (\tilde \p_{m+1})=\Coker(\tilde \p_{m+1})
 \end{align*}
 \end{lem}
\begin{proof}
Let $C$ be an allowable $(m+1)$-cycle with closed support in $N$.  Since $C$ is arc-symmetric in dimension $m+1$ at $x_0$, the strict transform 
$$
s(C) = \cl (\pi \inv (C) \setminus E) 
$$
is an $(m+1)$-cycle with closed support in 
$\widetilde M \setminus \p \widetilde M$.  The strict transform
represents a cycle in $H^{cl}_{m+1} (\widetilde M \setminus 
\p \widetilde M)  \cong 
H_{m+1} (\widetilde M,\p \widetilde M)$. 
  Thus 
$\ih^{cl}_{m+1} (N )$ is contained in the image of $\pi_{m+1}$.  
On the other hand by Proposition \ref{relative} 
each cycle in $H_{m+1} (\widetilde M,\p \widetilde M) 
$ has a representative $\widetilde C$ that is arc-symmetric in a neighborhood  of $E$ and that is in general position with $E$.  The image $\pi_{m+1}(\widetilde C)$ is an allowable cycle that represents a class in $\ih^{cl}_{m+1} (N )$. This gives the first formula. 

The second formula can be shown by a similar argument since $\ih^{c}_m (N )$ can be identified with $H_m (N') \simeq H_m (\p \widetilde M)$ divided by homologies arc-symmetric in a neighborhood of $x_0$.  These can be lifted to homologies in $\widetilde M$.  This shows that $\ih^{c}_m (N)$ is isomorphic to $\Image (\tilde i_m) \cong \Coker (\widetilde \p_{m+1} )$. This proves the lemma. 
\end{proof}

Now consider the following diagram with exact rows, where the vertical isomorphisms are given by the intersection pairing:
\begin{equation*}\begin{CD} 
H_{m+1}(\widetilde M) @>>>  H_{m+1} (\widetilde M,\p \widetilde M)  
@>\tilde \p_{m+1}>> H_{m} (\p  \widetilde M) @>>> 
H_{m}(\widetilde M) \\
@V\simeq VV  @V\simeq VV @V\simeq VDV @V\simeq VV \\
(H_{m} (\widetilde M,\p \widetilde M))^* @>>> 
(H_{m}(\widetilde M))^*  
@>>> (H_{m} (\p \widetilde M))^* 
@>(\tilde \p_{m+1})^*>> (H_{m+1}(\widetilde M, \p \widetilde M) )^*
\end{CD}\end{equation*}
By exactness, $D(\Image(\tilde \p_{m+1})) = \Ker((\tilde \p_{m+1})^*)$, which implies that $\Coker (\tilde \p_{m+1})$ and $\Image (\tilde\p_{m+1})$  are dually paired by the intersection product on $\p\widetilde M$. We have
\begin{equation*}\begin{CD}  
\Coker (\tilde \p_{m+1})  
@>D>\simeq> (\Image (\tilde\p_{m+1}))^*  \\ 
@VV\simeq V @VV\simeq V \\ 
\ih^c_m(N)  
@>D_m>> (\ih^{cl}_{m+1}(N))^*\\
 \end{CD}\end{equation*}
 and this diagram commutes by the argument given for \eqref{int1}. Therefore $D_m$ is an isomorphism. This completes the proof of Proposition \ref{localduality}.
\end{proof}

To prove global duality (Theorem \ref{duality}) using  local duality (Proposition \ref{localduality}) we use a Mayer-Vietoris argument. Let $Y$ be the set of singular points of $X$, and let $X'=X\setminus Y$. Let $N$ be the union of the open neighborhoods $N(x)$ for $x\in Y$, and let $N'= N\setminus Y$. We have $X'\cup N= X$ and $X'\cap N=N'$. This structure yields Mayer-Vietoris  sequences for $\ih^c_*(X)$ and $\ih^{cl}_*(X)$, with duality morphisms between them.
\begin{prop}
Let $X$ be a purely $n$-dimensional real algebraic variety with isolated singularities. There is a commutative diagram with exact rows ($k+l=n$):

\[\minCDarrowwidth15pt\begin{CD} 
@>>>\ih^c_k(N') @>>>  \ih^c_k (X')\oplus\ih^c_k(N)  
@>>> \ih^c_k(X)@>\Delta>> 
\ih^c_{k-1}(N')@>>> \\
@ . @VVD_{N'} V  @VVD_{X'}\oplus D_NV @VVD_XV @VVD_{N'}V@ . \\
@>>>(\ih^{cl}_l(N'))^* @>>> 
(\ih^{cl}_l(X'))^*\oplus (\ih^{cl}_l(N))^*
@>>> (\ih^{cl}_l(X))^* 
@>(\Delta')^*>> (\ih^{cl}_{l+1}(N'))^*@>>>
\end{CD}\]
\end{prop}
\begin{proof}
The construction of these exact sequences is analogous to the construction of the Mayer-Vietoris sequence for singular homology (\emph{cf}.\ \cite{massey}, ch.VII, \S 5). 

Let $IC^c_k(X)$ be the group of allowable $k$-chains of $X$ with respect to a good stratification $\mc S$. (We assume that $N$ is a union of strata of $\mc S$.) Let $IC^c_k(X') + IC^c_k(N)$ be the subgroup of $IC^c_k(X)$ consisting of all allowable chains that are sums of allowable chains of $X'$ and allowable chains of $N$. We claim that $IC^c_k(X') + IC^c_k(N)=IC^c_k(X)$; from this fact the classical construction gives the Mayer-Vietoris sequence.

To prove the claim, let $[C]\in IC^c_k(X)$, and choose $\zeta>0$ such that for all singular points $x_0\in Y$, we have $(S(x_0,\zeta)\cap X)\subset N$, and $S(x_0,\zeta)$ is transverse to $\mc S$ and $C$. Let $C_N = \bigcup_{x_0\in Y} (C\cap B(x_0,\zeta))$ and $C_{X'}= cl(C\setminus C_N)$. Then $[C_{X'}]\in IC^c_k(X')$, $[C_N]\in IC^c_k(N)$, and $[C]= [C_{X'}]+[C_N]$.

The boundary map $\Delta$ can be described as follows. The classical construction is this: If $C$ is a cycle in $X$, write $C=C_1+C_2$, where $C_1$ is a chain in $X'$ and $C_2$ is a chain in $N$. Then $\Delta C = \p C_1 =\p C_2$. So $\Delta$ has the following geometric description. Choose $\eta>0$ such that for all singular points $x_0\in Y$, we have $(S(x_0,\eta)\cap X)\subset N$, and $S(x_0,\eta)$ is transverse to $\mc S$. Let $L'(x_0) = S(x_0,\eta)\cap X$ and let $L' = \bigcup_{x_0} L'(x_0)$.
Then $\Delta[C] = [C'\cap L']$ where $[C']=[C]$ and $C'$ is transverse to $L'$.

The construction of the Mayer-Vietoris seqence for intersection homology with closed supports is parallel. There are restriction maps $IC^{cl}_l(X')\to IC^{cl}_l(N')$ and $IC^{cl}_l(N)\to IC^{cl}_l(N')$, and $IC^{cl}_l(N')= IC^{cl}_l(X')+IC^{cl}_l(N')$. The geometric description of $\Delta'$ is the same as for $\Delta$.

The commutativity of the ladder of duality maps follows from the geometric descriptions of $\Delta$  and the definition of the intersection pairing.
\end{proof}

To complete the proof of Theorem \ref{duality}, note that the maps $D_{X'}$ and $D_{N'}$ are isomorphisms since $X'$ and $N'$ are nonsingular, and the maps $D_N$ are isomorphisms by Proposition \ref{localduality}. Therefore the maps $D_X$ are isomorphisms.
\end{proof}

An alternate proof of Theorem \ref{duality} is to compute $\ih^c_*(X)$ and $\ih^{cl}_*(X)$ directly from the definitions, bypassing the local computations. The results of these global computations can be summarized as follows. (The proof is very similar to the proof of Proposition \ref{localduality}.)

Let $X$ be a purely $n$-dimensional real algebraic variety with isolated singularities, and let $Y$ be the set of singular points of $X$. Let $N$ be a small semialgebraic open neighborhood of $Y$ as above. Let $\pi:\widetilde X\to X$ be a resolution of singularities, and let $\widetilde N = \pi^{-1}(N)$.

If $n=2m$ we have
\begin{equation}\label{even}
\ih^c_k(X) = 
\begin{cases}
H_k(X),\ \ k>m\\
\Image[H_k(X\setminus N)\to  H_k(X)],\ \ k=m\\
H_k(X\setminus N),\ \ k<m
\end{cases}
\end{equation}
If $n=2m+1$ we have
\begin{equation}\label{odd}
\ih^c_k(X) = 
\begin{cases}
H_k(X),\ \ k>m+1\\
\Image[H_k(\widetilde X)\to  H_k(X)],\ \ k=m+1\\
\Image[H_k(\widetilde X\setminus \widetilde N)\to  H_k(\widetilde X)],\ \ k=m\\
H_k(X\setminus N),\ \ k<m
\end{cases}
\end{equation} 
\noindent
The same computations hold for $\ih^{cl}_*(X)$. (Note that the inclusions $X\setminus N \to X$ and $\widetilde X\setminus \widetilde N \to \widetilde X$ are proper.)

Let $M= X\setminus N$, a semialgebraic manifold with boundary. We have semialgebraic homeomorphisms $\cl(N)\cong c(\p M)$ (the cone on $\p M$), $X\cong M\cup_{\p M} c(\p M)$, and $\widetilde X\cong M \cup _{\p M} \cl(\widetilde N)$.
Duality for the intersection homology of $X$ follows from   formulas \eqref{even} and \eqref{odd} for compact supports, the corresponding formulas for closed supports, and classical duality for manifolds with boundary.

\section{Appendix: the pseudoboundary}

Let $C$ be a $k$-dimensional semialgebraic closed subset of the real algebraic variety $X$. We define the \emph{pseudoboundary} of $C$ to be the closed semialgebraic set
$$
\Sigma C = \{x\in C\ ; \text { $C$ is not arc-symmetric in 
dimension $k$ at $x$}\}.
$$

\begin{prop}\label{boundary}
$\partial C \subset \Sigma C$
\end{prop}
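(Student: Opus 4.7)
The plan is to prove the inclusion pointwise. Since $\Sigma C$ is closed, it suffices to fix $x\in C\setminus \Sigma C$ and show $x\notin \p C$. The hypothesis means, via the alternative description in terms of the arc-symmetric closure $A := \as{C}$, that $\dim_x(A\setminus C)<k$, so after shrinking to a small open neighborhood $W$ of $x$ in $X$ we may assume that $A\cap W$ and $C\cap W$ differ only on a semialgebraic set of dimension strictly less than $k$.

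The next step would be to compare $\p C$ with $\p A$ near $x$ via a common semialgebraic triangulation. I would pick a semialgebraic triangulation $\mathcal T$ of $(A\cup C)\cap W$ compatible with both $A\cap W$ and $C\cap W$. Because the two sets differ only in lower dimension, they contain exactly the same $k$-simplices of $\mathcal T$; hence for every $(k-1)$-simplex $S\in\mathcal T$ the incidence count of $k$-simplices is the same whether computed in $C$ or in $A$. By the triangulation characterization of $\p$ stated in Section~\ref{Chains}, this yields $\p C\cap W=\p A\cap W$ as subsets of $W$.

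It remains to show that $\p A$ is empty near $x$, and this is the step I expect to be the main obstacle. After passing to the closure of the pure $k$-dimensional part of $A$ (which is itself closed and arc-symmetric and shares the same $k$-simplices of $\mathcal T$), the problem reduces to the Sullivan-type statement that a closed pure $k$-dimensional arc-symmetric subset of a real algebraic variety is a $\mathbb Z/2$-Euler space, meaning $\chi(\lk(\cdot,y))\equiv 0 \pmod 2$ at every point $y$. I would either invoke the extension of Sullivan's theorem to arc-symmetric sets (cf.\ \cite{kurdyka1}, \cite{mccpar1}) or argue directly at the generic point $p$ of a $(k-1)$-simplex: arc-symmetry in dimension $k$ at $p$ pairs up the $k$-dimensional flap germs attached to the simplex via the analytic continuation of straight-line arcs tangent to each flap, so each such flap is matched with a distinct antipodal one, forcing an even number of incident $k$-simplices. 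Either way $\p A=\emptyset$ near $x$, hence $x\notin \p C$ and the inclusion $\p C\subset\Sigma C$ follows.
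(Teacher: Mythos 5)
Your strategy is genuinely different from the paper's: the paper resolves the Zariski closure of $C$, reduces to a union of quadrants in normal-crossing coordinates, and transports the (then obvious) identity $\p\widetilde C=\Sigma\widetilde C$ back down; you instead work locally with the arc-symmetric closure $A=\as{C}$ and try to reduce to an Euler-space statement for arc-symmetric sets. The first part of your argument is fine: near $x\notin\Sigma C$ one has $\dim_x(A\setminus C)<k$, so a common triangulation gives $\p C$ and $\p A$ the same $(k-1)$-support near $x$ (the boundary is a local notion and the triangulation characterization applies), and the problem does reduce to showing $\p A$ vanishes near $x$.

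The gap, which you correctly flag, is the Euler-space input, and as stated it is not yet filled. Two concrete issues. First, you invoke the closure $A'$ of the pure $k$-dimensional part of $A$ and assert it is ``itself closed and arc-symmetric''; this does use that $\AS$-irreducible components of a closed $\AS$-set are pure dimensional and that a finite union of $\AS$-closed sets is $\AS$-closed, which should be cited or argued rather than taken for granted. Second and more importantly, the sketch of the parity argument at a generic point $p$ of a $(k-1)$-simplex is not quite right: ``straight-line arcs tangent to each flap'' will in general leave the flap immediately, and ``antipodal'' suggests the pairing is determined by the tangent direction in the normal slice, which fails when several half-branches share a tangent. What actually works is to take, for each $k$-dimensional half-branch germ $F_i$, an analytic parametrization $\beta_i:[0,\delta)\to F_i$ that extends analytically to $(-\delta,\delta)$; arc-symmetry of $A'$ forces $\beta_i((-\epsilon,0))\subset A'$, hence into another half-branch $F_{\sigma(i)}$ with $\sigma(i)\neq i$ (local injectivity of $\beta_i$), and $F_i$, $F_{\sigma(i)}$ are the two half-branches of a single full analytic branch, so the half-branches come in pairs and the number of $k$-simplices incident to $S$ is even. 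Alternatively one may appeal to the Euler-space property of arc-symmetric sets, but you should verify that the precise statement you need is actually in \cite{kurdyka1} or \cite{mccpar1}; it is not obviously there in the required form (the survey \cite{kp} is a more promising reference). Finally, note that the paper's resolution argument also establishes, via its Lemma, the identity $\Sigma C=\pi(\Sigma\widetilde C)$, which is reused later in the paper (equation \eqref{arcimage}); your approach, even once completed, would not give that additional output.
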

\begin{proof}
Let $Z$ be the Zariski closure of $C$ in $X$ and let $\pi : \widetilde Z\to Z$ be a resolution of singularities of $Z$ adapted to $C$ 
(see \cite {mccpar2}, p.\ 134).   We assume that there is an algebraic subset $Y\subset Z$, with $\dim Y< k=\dim Z$, such that $E = \pi^{-1} (Y)$ is a normal crossing divisor, and $\pi$ induces an isomorphism between $\tilde Z \setminus E$ and $Z\setminus Y$.  
 We let $\widetilde C=\cl(\pi^{-1}(C)  \setminus E)$ be the  pullback of $C$ by $\pi$ (\cite {mccpar2}, p.\ 157).  In local coordinates adapted 
to the divisor with normal crossings $E$, we have that $\widetilde C$ is a union of closed quadrants.  We have 
$ \p \widetilde C= \widetilde C\cap \cl(\widetilde Z \setminus \widetilde C)$, the topological boundary of $\widetilde C$.

\begin{lem}\label{resolution}
$\p \widetilde C = \Sigma \widetilde C$ and 
$\Sigma C = \pi (\Sigma \widetilde C)$.
\end{lem}

\begin{proof}The first claim is obvious and the second one follows  easily from the fact that $\pi$ and the strict transform by $\pi$ give a one-to-one correspondence between the analytic arcs 
on $Z$ not entirely included in $Y$ and the analytic arcs in $\widetilde Z$ not entirely included in $E$.  

Indeed,  if $x\in \Sigma C$  there is an analytic arc $\gamma :(-\varepsilon, \varepsilon) \to Z$, not entirely included in $\pi(E)$, such that 
$\gamma (-\varepsilon , 0) \subset C$ and $\gamma (0, \varepsilon ) \subset Z \setminus C$.  Since $\gamma$ can be lifted to $\tilde \gamma :(-\varepsilon, \varepsilon) \to \widetilde Z$ with $\tilde \gamma (-\varepsilon , 0) \subset \widetilde C$ and 
$\tilde \gamma (0, \varepsilon ) \subset \widetilde Z \setminus \widetilde C$, we have $\tilde \gamma (0) \in \Sigma \widetilde C$.  This shows that 
$\Sigma C \subset \pi (\Sigma  \widetilde C)$.  

Conversely, if $x=\pi(z)$, $z\in \Sigma \widetilde C$, then there is an analytic arc $\tilde \gamma :(-\varepsilon, \varepsilon) \to \widetilde Z$ such that $\tilde \gamma (-\varepsilon , 0) \subset \widetilde C$ and $\tilde \gamma (0, \varepsilon ) \subset \widetilde Z \setminus \widetilde C$, with $\tilde \gamma (0)=z$.  Moreover, for every  semialgebraic $V\subset C$, $\dim V <k$, the arc
$\tilde \gamma$ can be chosen so that the image of $\tilde \gamma$ intersects $\pi^{-1}(V)$ only at $z$.  Then 
$\gamma =  \pi \circ \tilde \gamma$ satisfies $\gamma (-\varepsilon , 0) \subset C$ and $\gamma (0, \varepsilon ) \subset Z \setminus C$.  This shows that $x\in \Sigma C$, and hence we conclude that $\pi ( \Sigma  \widetilde C)\subset  \Sigma C$.
\end{proof}

To prove the Proposition, note that 
$[C] = \pi_*[ \widetilde   C] $  and $[\p C] = \pi_*[\p  \widetilde C] $.  Therefore   $\supp C \subset \pi (\supp \widetilde C)$ and 
$\p C \subset \pi (\p\widetilde C) = \pi (\Sigma  \widetilde C) = \Sigma C .
$
\end{proof}

\begin{rem}
The adapted resolution construction induces a filtration on the complex of semialgebraic chains; see  \cite {mccpar2}. 
\end{rem}

\begin{question}
Is it always possible after performing more blow-ups to obtain $(\widetilde C, \partial\widetilde C)=(V,\p V)$, a manifold with corners?
\end{question}

\begin{prop}\label{image}
Let  $f:X\to Y$ be a regular morphism of real algebraic varieties, and let $C$ be a $k$-dimensional semialgebraic closed subset of $X$. Suppose that $f|C$ is proper.  Then $\Sigma  f_*(C ) \subset f  (\Sigma C)$.    
\end{prop}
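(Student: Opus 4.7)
My approach mirrors the proof of Proposition \ref{boundary}: first reduce to a combinatorially simple configuration via an adapted resolution, then analyze the pushforward by lifting analytic arcs. Let $Z$ be the Zariski closure of $C$ in $X$ and let $\pi : \widetilde Z \to Z$ be an adapted resolution exactly as in the proof of Proposition \ref{boundary}, so that $\widetilde C := \cl(\pi^{-1}(C) \setminus E)$ is locally a union of closed quadrants in the smooth variety $\widetilde Z$, $\Sigma \widetilde C = \partial \widetilde C$, $[C] = \pi_*[\widetilde C]$, and $\Sigma C = \pi(\Sigma \widetilde C)$. Setting $g := f \circ \pi$, the restriction $g|\widetilde C$ is proper and $g_*[\widetilde C] = f_*[C]$, while $g(\Sigma \widetilde C) = f(\Sigma C)$. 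Thus it suffices to prove $\Sigma g_*(\widetilde C) \subset g(\partial \widetilde C)$, so I may assume from now on that $C$ itself is locally a union of closed quadrants with $\Sigma C = \partial C$, and that $C \setminus \partial C$ is a smooth $k$-dimensional submanifold of the ambient smooth variety.

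The main argument then proceeds by contrapositive: let $y_0 \in \supp f_*(C) =: D$ with $y_0 \notin f(\partial C)$; I show $f_*(C)$ is arc-symmetric in dimension $k$ at $y_0$. The fiber $F_0 := f^{-1}(y_0) \cap C$ is compact by properness and disjoint from $\partial C$, hence lies in the smooth locus of $C$. Using properness and compactness, I pick a semialgebraic neighborhood $W_0$ of $y_0$ in $Y$ and a semialgebraic neighborhood $\mathcal U$ of $F_0$ in $C \setminus \partial C$ with $C \cap f^{-1}(W_0) \subset \mathcal U$. Applying Hardt's semialgebraic triviality to the proper semialgebraic map $f|\mathcal U : \mathcal U \to W_0$, there is a semialgebraic subset $V' \subset D_0 := D \cap W_0$ of dimension $<k$ such that over $D_0 \setminus V'$ the map $f|\mathcal U$ is a trivial finite covering of constant odd degree $d$ (odd because $D$ is the support of $f_*(C)$). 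This $V'$ will be the lower-dimensional ``bad set'' witnessing arc-symmetry of $D$ at $y_0$.

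Now given any analytic arc $\gamma : (-1,1) \to Y$ with $\gamma((-1,0)) \subset D_0 \setminus V'$ and $\gamma(0) = y_0$, the covering property over $D_0 \setminus V'$ provides $d$ disjoint analytic lifts $\tilde\gamma_1, \ldots, \tilde\gamma_d : (-1,0) \to \mathcal U$ of $\gamma|_{(-1,0)}$. Each $\tilde\gamma_j(t)$ tends to some $x_j \in F_0$ as $t \to 0^-$; since $C$ is smooth of dimension $k$ at $x_j$ and $f$ is real analytic, $\tilde\gamma_j$ extends analytically across $t = 0$ and remains in $C$ on some interval $(-1, \epsilon_j)$. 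Taking $\epsilon = \min_j \epsilon_j$, I obtain $d$ analytic preimages of $\gamma|_{(0, \epsilon)}$ in $C$; shrinking $W_0$ if needed, properness of $f|C$ forces these to exhaust the fiber of $f|C$ over $\gamma((0,\epsilon))$, so the fiber parity remains $d \equiv 1 \pmod 2$ and hence $\gamma((0, \epsilon)) \subset D$. Since $V'$ is semialgebraic of dimension $<k$ and $\gamma$ is analytic with $\gamma((-1,0)) \cap V' = \emptyset$, shrinking $\epsilon$ once more gives $\gamma((0,\epsilon)) \cap V' = \emptyset$. Thus $\gamma((0,\epsilon)) \subset D_0 \setminus V'$, and $y_0 \notin \Sigma D$ as required.

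The main obstacle is ensuring that no unexpected extra preimages of $\gamma(s)$ for small $s > 0$ arise beyond the $d$ lifts $\tilde\gamma_j$: branching of $f$ at $y_0$ could in principle generate new sheets of the covering over $\gamma((0,\epsilon))$ and spoil the parity count. This is controlled jointly by properness of $f|C$, which confines all preimages of points in $W_0$ to a small neighborhood of $F_0$ once $W_0$ is chosen small enough, and by local smoothness of $C$ at each $x_j \in F_0$, guaranteed by $F_0 \cap \partial C = \emptyset$, which ensures that each analytic left-side lift continues uniquely across $t = 0$ without new branches materializing. Handling this parity bookkeeping cleanly is really the heart of the proposition, since the support of $f_*(C)$ is precisely controlled by generic fiber parity, and the pseudoboundary picks up exactly those points where that parity can fail to stabilize along analytic arcs.
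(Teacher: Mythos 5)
Your route diverges from the paper's after the adapted-resolution step. The paper resolves both $C$ and its image $D=f_*(C)$ by adapted resolutions, performs additional blow-ups to produce a regular lift $\tilde f$ between the resolutions making the square commute, and then reduces the whole proposition to the chain-level commutation $\partial\circ\tilde f_*=\tilde f_*\circ\partial$, which is an elementary fact about push-forward of semialgebraic chains. You resolve only the source and run a direct arc-lifting argument on $Y$. That is a legitimate alternative route, but your fiber count has a genuine gap.

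The assertions that "properness forces these to exhaust the fiber" and that each lift "continues uniquely across $t=0$ without new branches materializing" are both false. Take $X=Y=\R^2$, $C=\R^2$ (so $\partial C=\Sigma C=\emptyset$), $f(u,v)=(u^3-uv,\,v)$, $\gamma(t)=(0,t)$, $y_0=(0,0)$, $F_0=\{(0,0)\}$. Over $t<0$ the fiber is the single point $(0,t)$, so $d=1$ and the lift $(0,t)$ does continue analytically through $0$; yet over $t>0$ the fiber is $\{(0,t),(\pm\sqrt t,t)\}$. The two extra preimages are new half-branches emanating from $F_0$ that are not continuations of any left-side lift; properness confines them near $F_0$ but does not forbid them. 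Moreover the lifts need not extend analytically at all: with $f(u,v)=(u^3,v)$ and $\gamma(t)=(t,0)$ the unique lift is $(t^{1/3},0)$, continuous but not $C^1$ at $t=0$, because $f|C$ has a critical point on $F_0$ (smoothness of $C$ at $x_j$ plus analyticity of $f$ do not give you an analytic continuation). In both examples the conclusion of the proposition still holds because the extra half-branches appear in even numbers, but that parity statement is exactly what must be proved and your argument never supplies it. What you would actually need is a Puiseux argument: near each point of $F_0\times\{0\}$ the set $\{(x,t): x\in C,\ f(x)=\gamma(t)\}$ is a finite union of Puiseux half-branches, and each full Puiseux branch either pairs one $t<0$ half-branch with one $t>0$ half-branch (when its $t$-coordinate vanishes to odd order) or pairs two half-branches on the same side of $0$ (even order); hence the number of half-branches over $t<0$ and over $t>0$ have equal parity, which gives $\gamma((0,\epsilon))\subset D$. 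That bookkeeping, which you correctly flag as "really the heart of the proposition," is precisely what the paper's double-resolution argument encapsulates automatically in $\partial\tilde f_*=\tilde f_*\partial$, and it is the missing step here.
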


\begin{proof}
We apply Lemma \ref{resolution} to both $C\subset X$ and $D ={ f_*(C  )}  $. (We assume $f_*([C]) \ne  0$ otherwise the claim is obvious.)  Let  $\pi _C :\widetilde Z \to Z\subset X$ and $\pi _D :\widetilde  W\to W \subset Y$ be  adapted 
resolutions of $C$ and $D$, respectively.  

We assume for simplicity that the image of every irreducible component of $Z$ by $f$ is of dimension $k$.  
Then, after performing more blow-ups if necessary, there is a regular morphism $\tilde f$ so that the following diagram 
commutes:
  \[
  \xymatrix{
     \tilde Z \ar^{\widetilde f}[rr] \ar_{\pi_C }[d] && \widetilde W \ar^{\pi_D} [d] \\
    Z  \ar^{f }[rr]  && W 
  } 
\]  
Note that   $\widetilde D=  \tilde f_* (\widetilde C) $  gives  $\p \widetilde D=  \tilde f_* (\p \widetilde C) $, which  implies $\p \widetilde D 
\subset   \tilde f (\p \widetilde C) $.  
Now 
\begin{align*}
\Sigma f_* (C) & = \Sigma D = \pi_D (\p \widetilde D)  \\ \notag
 & \subset   \pi_D (\tilde f(\p \widetilde C) ) = 
\pi_D(\tilde f(\Sigma \widetilde C)) =  f (\pi_{C} ( \Sigma \widetilde C) )= f (\Sigma C) .
\end{align*}
\end{proof}

For several applications we need the following property of the pseudoboundary of a deformation homology \eqref{defhom} with respect to a deformation $\Psi_t$ of the identity.

\begin{prop}\label{defchain}
Let $X$ be a real algebraic variety, and let $U$ be a neighborhood of the origin in $\R^l$.
Let $\Phi(t,x) = (t, \Psi (t,x)) : U\times X \to U\times X$  be an arc-analytic and semialgebraic homeomorphism with arc-analytic and semialgebraic inverse, such that $\Psi(0,x) = x$ for all $x\in X$. Let $t_0\in U$ be such that the segment $I=\{t\in\R^l\ ;\ t = st_0, 0\leq s\leq 1\}$ is contained in U. Let $C$ be a $k$-cycle in $X$, and let $B = \Psi_* (I\times C)$. Then 
$$\Sigma B \subset \Psi ( \Sigma (I\times C) )= C\cup \Psi  (I\times \Sigma C)\cup \Psi  (\{t_0\} \times C).$$ 
\end{prop}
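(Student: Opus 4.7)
The plan is to realize $B$, up to an arc-analytic homeomorphism, as the projection to $X$ of the product $I\times C$, and then combine three ingredients: a product formula for $\Sigma$, the invariance of $\Sigma$ under arc-analytic homeomorphisms with arc-analytic inverse, and Proposition \ref{image} for the projection.

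\emph{Step 1 (product formula).} I would first establish
\[
\Sigma(I\times C)=(\{0,t_0\}\times C)\cup(I\times \Sigma C).
\]
View $I$ inside the affine line through $0$ and $t_0$, for which the identity is an adapted resolution with ``exceptional set'' $\{0,t_0\}$. Let $\pi_C\colon\widetilde Z_C\to Z_C$ be an adapted resolution of $C$ as in Lemma \ref{resolution}. Then $\mathrm{id}\times\pi_C$ is an adapted resolution of $I\times C$, and the closure of the preimage of $I\times C$ outside the exceptional divisor is $\widetilde I\times\widetilde C$, whose topological boundary equals $(\partial\widetilde I\times\widetilde C)\cup(\widetilde I\times\partial\widetilde C)$. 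Applying Lemma \ref{resolution} and using $\Sigma I=\{0,t_0\}$ yields the product formula.

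\emph{Step 2 (transport by $\Phi$).} Since $\Phi\colon U\times X\to U\times X$ is a semialgebraic arc-analytic homeomorphism with semialgebraic arc-analytic inverse, $\Phi$ and $\Phi^{-1}$ together set up a bijection between germs of real analytic arcs in $I\times X$ and in $\Phi(I\times X)$. Because the definition of $\Sigma$ is phrased entirely in terms of such arcs (modulo a thin semialgebraic set), this bijection gives
\[
\Sigma\widetilde B=\Phi\bigl(\Sigma(I\times C)\bigr),\qquad \widetilde B:=\Phi(I\times C)\subset\R^l\times X.
\]

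\emph{Step 3 (projection and Proposition \ref{image}).} The projection $p\colon\R^l\times X\to X$ is a regular morphism of real algebraic varieties, and $p|\widetilde B$ is proper because $I$ is compact and $\widetilde B\subset I\times X$ is closed semialgebraic. Since $p\circ\Phi=\Psi$ on $I\times X$, we have $B=\Psi_*(I\times C)=p_*\widetilde B$, and Proposition \ref{image} gives
\[
\Sigma B\subset p(\Sigma\widetilde B)=p\bigl(\Phi(\Sigma(I\times C))\bigr)=\Psi\bigl(\Sigma(I\times C)\bigr).
\]
Substituting Step 1 and using $\Psi(0,x)=x$ yields
\[
\Sigma B\subset C\cup \Psi(\{t_0\}\times C)\cup \Psi(I\times\Sigma C),
\]
the desired inclusion.

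The main obstacle is Step 2: justifying the identity $\Sigma\widetilde B=\Phi(\Sigma(I\times C))$ on the nose. The argument is closely analogous to the second assertion of Lemma \ref{resolution}, with the adapted resolution $\pi$ replaced by the arc-analytic homeomorphism $\Phi$; the simplification is that $\Phi$ is bijective, so both inclusions reduce to the statement that $\Phi$ (respectively $\Phi^{-1}$) sends a witnessing analytic arc at $z\in\Sigma(I\times C)$ (respectively at $\Phi(z)\in\Sigma\widetilde B$) to a witnessing analytic arc at the image point. This is where the arc-analyticity of both $\Phi$ and $\Phi^{-1}$, together with semialgebraicity, is essential, as only the arc-analytic category is closed under the operations of lifting and pushing arcs without introducing spurious tangencies that could destroy the failure-of-arc-symmetry condition.
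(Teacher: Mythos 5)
Your proposal is correct and takes essentially the same route as the paper: the paper also applies Proposition \ref{image} to the chain $\Phi(I\times C)$ and the projection onto the second factor, uses the transport identity $\Sigma\Phi(I\times C)=\Phi(\Sigma(I\times C))$ ``by the properties of $\Phi$,'' and leaves the product formula for $\Sigma(I\times C)$ implicit in the statement. Your Steps 1 and 2 simply make explicit the two points the paper leaves terse.
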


\begin{proof}
We apply Proposition \ref{image}  to the chain $\Phi (I\times C) $ and the projection on the second factor 
$f:I\times X\to X$.  Then $B = f_* (\Phi (I\times C) ) $ and by  the properties of $\Phi$ we have $\Sigma \Phi (I\times C) = \Phi (\Sigma ( I\times C)) $.  So by Proposition \ref{image}, 
$
\Sigma B \subset  f (\Sigma \Phi (I\times C) )  = f( \Phi (\Sigma ( I\times C))) =  \Psi  (I\times \Sigma C). 
$
\end{proof}


\end{document}